\journalname{...}
\date{ \phantom{b} \vspace{45mm}\phantom{e}}
\newcommand\bfd{{\mathbf d}}
\newcommand\bfe{{\mathbf e}}
\newcommand\bff{{\mathbf f}}
\newcommand\bfg{{\mathbf g}}
\newcommand\bfr{{\mathbf r}}
\newcommand\bfu{{\mathbf u}}
\newcommand\bfw{{\mathbf w}}
\newcommand\bfz{{\mathbf z}}
\newcommand\bfdu{{\mathbf {d_u}}}
\newcommand\bfdw{{\mathbf {d_w}}}
\newcommand\bfeu{{\mathbf {e_u}}}
\newcommand\bfew{{\mathbf {e_w}}}
\newcommand\bfud{{\mathbf {\dot{u}}}}
\newcommand\bfdud{{\mathbf {\dot{d}_u}}}
\newcommand\bfdwd{{\mathbf {\dot{d}_w}}}
\newcommand\bfeud{{\mathbf {\dot{e}_u}}}
\newcommand\bfewd{{\mathbf {\dot{e}_w}}}
\newcommand\bfwt{\bfw^T}
\newcommand\bfeut{\eu^T}
\newcommand\bfewt{\ew^T}
\newcommand{\bfeudt}{\bfeud^T}
\newcommand{\bfewdt}{\bfewd^T}
\newcommand\bfA{{\mathbf A}}
\newcommand\bfK{{\mathbf K}}
\newcommand\bfM{{\mathbf M}}
\newcommand\bfR{{\mathbf R}}
\newcommand\bfMd{{\mathbf {\dot{M}}}}
\newcommand\bfAd{{\mathbf {\dot{A}}}}
\newcommand\bfMdd{{\mathbf {\ddot{M}}}}
\newcommand\bfAdd{{\mathbf {\ddot{A}}}}
\newcommand\bfvartheta{{\boldsymbol \vartheta}}
\newcommand\andquad{\quad\hbox{ and }\quad}
\newcommand\quadand{\quad\hbox{ and }\quad}
\newcommand\quadfor{\quad\hbox{ for }\quad}
\newcommand\quadfora{\quad\hbox{ for all }\quad}
\newcommand{\diff}{\frac{\d}{\d t}}
\renewcommand\ss{s}
\renewcommand\d{\hbox{\rm{d}}}
\newcommand{\Ga}{\Gamma}
\newcommand{\Gat}{\Gamma(t)}
\newcommand{\GT}{\mathcal{G}_T}
\newcommand{\laplace}{\Delta}
\newcommand{\nbg}{\nabla_{\Gamma}}
\newcommand{\nbgh}{\nabla_{\Gamma_h}}
\newcommand{\mat}{\partial^{\bullet}}
\newcommand{\eps}{\varepsilon}
\newcommand{\inv}{^{-1}}
\newcommand{\nb}{\nabla}
\newcommand{\pa}{\partial}
\newcommand{\R}{\mathbb{R}}
\def \t {(t)}
\newcommand{\ct}{(\cdot,t)}
\def \to {\rightarrow}
\newcommand{\vphi}{\varphi}
\def \s {(s)}
\newcommand{\us}{\bfu^\ast}
\newcommand{\ws}{\bfw^\ast}
\newcommand{\dotus}{\bfud^\ast}
\newcommand{\eu}{\bfe_\bfu}
\newcommand{\ew}{\bfe_\bfw}
\newcommand{\doteu}{\dot\bfe_\bfu}
\newcommand{\du}{\bfd_\bfu}
\newcommand{\dw}{\bfd_\bfw}
\newcommand{\N}{{\mathbb N}}
\newcommand{\blueoff}{\color{black}}
\begin{document}

\title{ Stability and error estimates for non-linear Cahn--Hilliard-type  equations on evolving surfaces}
% Error estimates for Cahn--Hilliard equations on evolving surfaces: a new stability result via energy estimates

\titlerunning{Numerical analysis for Cahn--Hilliard-type equations on evolving surfaces}        % if too long for running head

\author{Cedric Aaron Beschle \and Bal\'{a}zs~Kov\'{a}cs
}

\authorrunning{C.A.~Beschle and B.~Kov\'{a}cs } % if too long for running head

\institute{C.A.~Beschle\at
			Institut f\"ur angewandte Analysis und numerische Simulation (IANS), Universit\"at Stuttgart, \\
			Pfaffenwaldring 57, 70569 Stuttgart, Germany \\
			\email{cedric.beschle@ians.uni-stuttgart.de} \\
			and B.~Kov\'{a}cs\at
			Faculty of Mathematics, University of Regensburg, \\
			Universit\"atsstra\ss{}e 31, 93049 Regensburg, Germany \\
			\email{balazs.kovacs@mathematik.uni-regensburg.de}
}

\date{}
% The correct dates will be entered by the editor

\maketitle

\begin{abstract}
 In this paper, we consider a non-linear fourth-order evolution equation of Cahn--Hilliard-type on evolving surfaces with prescribed velocity, where the non-linear terms are only assumed to have locally Lipschitz derivatives.  High-order evolving surface finite elements are used to discretise the weak equation system in space, and a modified matrix--vector formulation for the semi-discrete problem is derived. The anti-symmetric structure of the equation system is preserved by the spatial discretisation. A new stability proof, based on this structure, combined with consistency bounds proves optimal-order and uniform-in-time error estimates. The paper is concluded by a variety of numerical experiments. 

%[cb: There needs to be a differentiation between CH with general non-linear potential and general non-linear CH to avoid confusion.] 

  \keywords{Cahn--Hilliard equation \and evolving surfaces \and evolving surface finite elements \and error estimates \and stability \and energy estimates \and general non-linear problems} 
  \subclass{65M60 \and 35R01 \and 35K55 \and 65M12 \and 65M15}
\end{abstract}

\section{Introduction}

This paper studies  non-linear fourth-order evolution equations of Cahn--Hilliard-type  on evolving surfaces with prescribed surface velocity. The nonlinearities and their derivatives are only assumed to satisfy locally Lipschitz-type assumptions. The Cahn--Hilliard-type equation is formulated as a system of second-order equations, exhibiting an \emph{anti-symmetric} structure:

\begin{equation}
\label{eq:CH intro}
	\begin{aligned}
		\mat u - \laplace_{\Ga\t} w = &\ f(u,\nb_{\Ga\t} u)  - u (\nb_{\Ga\t} \cdot v) , \\
		w + \laplace_{\Ga\t} u = &\ g(u,\nb_{\Ga\t} u) ,
		\end{aligned}
		\qquad \text{on } \Ga\t .
\end{equation}

The semi-discretisation of the system by high-order evolving surface finite elements, cf.~\cite{DziukElliott_ESFEM,highorderESFEM}, preserves this anti-symmetric structure, which is utilised to prove a convergence result, via a new stability proof exploiting this structure. Optimal-order uniform-in-time error estimates in the $L^2$ and $H^1$ norms (depending on the $\nb_{\Ga\t} u$-dependence of the nonlinearities) for both solution variables are proved. 

Cahn and Hilliard first described an equation modelling phase separation processes in \cite{CahnHilliard}. Since then it found many applications in an evolving surface setting as well: 
\cite{OConnorStinner} investigates the asymptotic limit, % (as the interfacial width parameter $\eps$ tends to zero) of the Cahn--Hilliard equation on evolving surfaces with prescribed velocity,
and the effect of a mobility term leading to a degenerate Cahn--Hilliard equation.
In \cite{FemTwoPhase} a discretisation of a coupled Cahn--Hilliard/Navier--Stokes system for lipid bilayer membranes is studied. 
In \cite{LatPhaseSep} the authors simulated lateral phase separation and coarsening in biological membranes by comparing surface Cahn--Hilliard and surface Allen--Cahn equations using unfitted finite elements. 
In \cite{PhaseSepDynSurf} a model of lateral phase separation in a two component material surface is presented. %, that can be regarded as a Cahn--Hilliard equation on an evolving surface. 
In \cite{IsogeomFem} %the Cahn--Hilliard equation on evolving surfaces is used to 
a model for phase transitions on deforming surfaces is studied using isogeometric finite elements. 
For singular non-linearities, well-posedness and global-in-time existence results are established in the recent preprint \cite{CaetanoElliott2021}. 
A review of the planar case is found, e.g. in \cite{Elliott_flatCHsurvey}.

The Cahn--Hilliard equation on a \emph{stationary} surface with boundary was first investigated by Du, Ju and Tian in \cite{DuJuTian}. They study a full discretisation of the Cahn--Hilliard equation with homogeneous Dirichlet boundary conditions, and prove optimal-order error estimates in the $L^2$ norm for $u$, using linear finite elements.

Elliott and Ranner were the first to consider the Cahn--Hilliard equation on a closed \emph{evolving} surface with a prescribed velocity in \cite{ElliottRanner_CH}. They proved optimal-order uniform-in-time error estimates in the $L^2$ and $H^1$ norms for the concentration difference and optimal-order $L^2$-in-time error estimates in the $L^2$ and $H^1$ norms for the chemical potential using a discretisation by linear evolving surface finite elements.
Using a new stability proof, the results of this paper improve the error estimates for the chemical potential from optimal-order $L^2$-in-time to optimal-order uniform-in-time estimates. 
%Furthermore, the proof is built, such that it is extendable to general non-linear Cahn--Hilliard equations in a straightforward way.

 In \cite{LatPhaseSep,PhaseSepDynSurf} phase separation on dynamic membranes was approximated by a mixed finite difference--finite element discretisation of the Cahn--Hilliard equation on evolving surfaces.

%%%%%

\medskip

The main results of this paper are  stability and  optimal-order uniform-in-time semi-discrete error estimates for the evolving surface  non-linear Cahn--Hilliard-type equations: 
 (a) in the $H^1$ norm if the nonlinearities depend on $u$ and $\nbg u$, requiring at least quadratic finite elements, and (b) both in the $L^2$ and $H^1$ norms if both nonlinearities are \emph{independent} of the surface gradient, using finite elements of degree $k \geq 1$. 
Convergence is proved via a new  stability estimate and showing consistency of the semi-discretisation.   

 The rather general model \eqref{eq:CH intro} includes the Cahn--Hilliard equation with proliferation terms \cite[equation~$(3.1)$]{Miranville}, with advection terms on the surface cf.~\cite{advectiveCH},
the generalised Cahn--Hilliard-type equation of Cherfils, Miranville and Zelik \cite[equation~$(1.7)$]{CherfilsMiranvilleZelik2014}, see also \cite{DuanZhao2017,Miranville2019} and the reference therein for theoretical results,
and the generalised Cahn--Hilliard equation from \cite{KhainSander2008generalized}, etc.
To correct mesh deformations of the evolving discrete surface arbitrary Lagrangian Eulerian (ALE) methods have been proposed and analysed, see, e.g. \cite{ALE1,ALE2}, the correcting advection-like term with the tangential ALE velocity also fit into the framework of \eqref{eq:CH intro}.

Another main contribution of the paper is a new stability proof based on multiple energy estimates (summarised in Figure~\ref{fig:energy estimates}). 
The main idea is to exploit the anti-symmetric structure of the second-order system corresponding to the Cahn--Hilliard(-type) equation. The generality of the stability proof can also be seen through the related results in \cite{Willmore} and \cite{CHdynbc}. A further advantage of this stability proof, is that we strongly expect it to translate to proving stability and convergence of full discretisations using linearly implicit backward difference formulae. This is, however, beyond the scope of this paper.

%A Ritz map \cite{LubichMansour_wave,highorderESFEM}, that maps quantities onto the evolving surface finite element space, is used to split the error into two error terms. The first term is bounded by the Ritz map estimates while the bound for the second term is proved in this paper. This proof is subdivided into a 

In the presented stability analysis, the difference between the Ritz map of the exact solution and the numerical solution is estimated in terms of defects and their time derivatives. To account for initial errors in the chemical potential, a modification of the semi-discrete system is required. The stability proof uses \emph{energy estimates}, performed in the matrix--vector formulation, and utilises the \emph{anti-symmetric} structure of the error equations, testing the error equations with the errors and also with their time derivatives. The stability analysis was first developed for Willmore flow in \cite{Willmore}. A uniform-in-time $L^\infty$ bound for the numerical solution is key to estimate the non-linear term. It is obtained from the time-uniform $H^1$ norm error bounds using an inverse estimate and exists for a small time due to a continuous initial function. The stability proof is independent of geometric errors.

%We also give details to an extension of the stability proof to rather general non-linear Cahn--Hilliard equations, which, e.g. include proliferation terms \cite[equation~$(3.1)$]{Miranville}, where an additional non-linearity appears in both variables in both equations of the system. The generality of the stability proof can also be seen through the related results in \cite{Willmore} and \cite{CHdynbc}. A further advantage of this stability proof, is that it is expected to be generalisable to a full discrete method based on linearly implicit backward difference methods. 
%%, cf.~\cite{LubichMansourVenkataraman_bdsurf,ALE2,KovacsPower_quasilinear}.

In the consistency analysis the $L^2$ norms of the defects and their time derivatives are estimated. The bounds use geometric error estimates, including interpolation and Ritz map error estimates, bounds on the discrete surface velocity, and geometric approximation errors for high-order evolving surface finite elements, see~\cite{highorderESFEM}.

%These consistency estimates are straightforwardly extended to general non-linear Cahn--Hilliard equations. This leads to a theorem stating optimal-order error estimates as formulated above for the general non-linear Cahn--Hilliard equation.

%%%%%

The paper is structured as follows. In Section \ref{section:CH}, based on the papers \cite{DziukElliott_ESFEM} and \cite{ElliottRanner_CH}, the weak formulation for the Cahn--Hilliard equation on evolving surfaces is derived as a system of equations. In Section \ref{section:semidiscrete CH equations} the evolving surface finite element method is used to discretise this system of equations in space. The obtained semi-discrete problem is written as a matrix--vector formulation. In Section \ref{section:error estimates} the novel error estimates proved in this work are stated and discussed in comparison to the existing results by Elliott and Ranner \cite{ElliottRanner_CH}. Section \ref{section:stability} contains the stability part of the proof. Section \ref{section:consistency} treats the consistency part of the proof. In Section \ref{section:proof of main theorem} the two parts are combined to prove the main result.
%In Section \ref{section:extensions} the result is extended to general non-linear Cahn--Hilliard equations. 
In Section \ref{section:BDF} a full discretisation to the problem is given, cf.~\cite{AkrivisLubich_quasilinBDF,AkrivisLiLubich_quasilinBDF}. In Section \ref{section:numerics} the theoretical results are complemented by numerical experiments.

\section{Cahn--Hilliard equation on evolving surfaces}
\label{section:CH}

In the following we consider a smoothly evolving closed surface $\Ga\t \subset \R^{d+1}$, with $d = 1,2$, for $0 \leq t \leq T$. The initial surface $\Ga(0) = \Ga^0$ is given (and at least $C^2$), and it evolves with the \emph{given} and sufficiently smooth velocity $v$. The surface $\Ga\t$ is given as the image of a smooth mapping $X: \Ga^0 \times [0,T] \to \R^{d+1}$, by $\Ga\t = \{X(p,t) \mid p \in \Ga^0 \}$. The embedding $X$ and the velocity $v$ satisfy the ordinary differential equation (ODE):
\begin{equation}
\label{eq:ODE for positions}
	\pa_t X(p,t) = v(X(p,t),t) \qquad p \in \Ga^0, \ 0 \leq t \leq T .
\end{equation}
Let $\nu$ denote the unit outward normal vector to $\Ga\t$. 
Then the surface (or tangential) gradient on $\Ga\t$, of a function $u : \Ga\t \to \R$, is denoted by $\nb_{\Ga\t} u$, and is given by $\nb_{\Ga\t} u = \nb \bar u -( \nb \bar u \cdot \nu) \nu$ (the surface gradient is independent of the extension $\bar u$ into a small neighbourhood of $\Ga\t$), while the Laplace--Beltrami operator on $\Ga\t$ is given by $\laplace_{\Ga\t} u = \nb_{\Ga\t} \cdot \nb_{\Ga\t} u$.
Moreover, $\mat u$ denotes the material derivative of $u$, i.e.~$\mat u(\cdot,t) = \d / \d t (u(X(\cdot,t),t)) = \pa_t \bar u(\cdot,t) + v \cdot \nb \bar u(\cdot,t)$. The space-time manifold will be denoted by $\GT = \cup_{t \in [0,T]} \Ga\t \times \{t\}$. For more details on these notions we refer to \cite{DziukElliott_ESFEM,DziukElliott_acta,Demlow,highorderESFEM}.
%In addition to its definition we are going to describe $\Ga\t$ as the zero-level set of a smooth signed distance function $d(\cdot,t): \R^{d+1} \to \R$ such that 
%\begin{equation*}
%\Ga\t := \{ x \in \R^{d+1} \, |\, d(x,t) =0 \}.
%%\label{eq: EvolvingSurface}
%\end{equation*}
%
%[kb: Surface representation through signed distance function: here or later...] 
%[cb: added the definition] 

In this paper we consider the  \emph{general non-linear} Cahn--Hilliard-type equation on evolving surfaces. It is a second-order system of partial differential equations for scalar functions $u, w: \GT \to \R$ given by
\begin{subequations}
\label{eq:CH system}
	\begin{alignat}{3}
		\label{eq:CH system - u}
		\mat u - \laplace_{\Ga\t} w = &\ f(u,\nb_{\Ga \t} u) - u (\nb_{\Ga\t} \cdot v) & \qquad & \text{on } \Ga\t , \\
		\label{eq:CH system - w}
		 w + \laplace_{\Ga\t} u   = &\  g(u,\nb_{\Ga \t} u) & \qquad & \text{on } \Ga\t ,
	\end{alignat}
\end{subequations}
 with continuous (and sufficiently regular) initial condition $u(\cdot,0) = u^0$ on the initial surface $\Ga^0$. 
% The parameter $\eps > 0$ describes the length of the transition regions, we are however not interested in taking the limit $\eps \to 0$, and thus will set $\eps=1$.
 The scalar functions $f,g:\R \times \R^d \to \R$ and their derivatives $\pa_i f, \pa_i g$ are only assumed to be locally Lipschitz continuous. A typical example is a double-well potential, i.e.~for the Cahn--Hilliard equation sets  $f(u) = 0$ and $g(u) = \frac{1}{4}((u^2 - 1)^2)'$. In this case, the  solution $u \in [-1,1]$ models the concentration of surfactant fluids, with $u=\pm 1$ indicating the pure occurrences of each, cf.~\cite{CahnHilliard}. 

 The  classical Cahn--Hilliard equation on a stationary surface $\Ga$ can be derived as the $H\inv(\Ga)$ gradient flow of the Ginzburg--Landau energy 
\begin{equation}
\label{eq:CH energy}
	E(u) = \int_{\Ga} \!\! \Big( \, \frac{1}{2} |\nbg u|^2 + F(u) \, \Big) ,
\end{equation}
cf.~\cite[Remark~2.1]{ElliottRanner_CH}. In \cite{ElliottRanner_CH} it is stated, that to obtain a gradient flow on an evolving surface, a model for the surface velocity $v$ is needed, leading to a coupled system for $u$ and $v$. 
In the evolving surface case, $w = - \laplace_{\Ga\t} u + f(u)$  (with $f = F'$) is the variation of the evolving surface Ginzburg--Landau energy, see \cite{OConnorStinner}.   

\subsection{Weak formulation}
\label{section:weak form}

% By introducing the chemical potential $w: \GT \to \R$ we rewrite the Cahn--Hilliard equation \eqref{eq:CH - 4th order form} into a system of second-order partial differential equations: For $u,w: \GT \to \R$
% \begin{subequations}
% \label{eq:CH system}
% 	\begin{alignat}{3}
% 		\label{eq:CH system - u}
% 		\mat u - \laplace_{\Ga\t} w = &\ g'(u) - u (\nb_{\Ga\t} \cdot v) & \qquad & \text{on } \Ga\t , \\
% 		\label{eq:CH system - w}
% 		w +  \laplace_{\Ga\t} u  = &\  f'(u) & \qquad & \text{on } \Ga\t ,
% 	\end{alignat}
% \end{subequations}
% with initial data $u(\cdot,0) = u_0$ on $\Ga^0$. 

On the evolving surface $\Ga\t$ we recall the definition of standard Sobolev spaces $L^2(\Ga\t)$, and $H^1(\Ga\t)$ and its high-order variants, endowed with their usual norms, see \cite{DziukElliott_ESFEM,DziukElliott_L2}. We also refer to \cite{AlphonseElliottStinner_abstract,AlphonseElliottStinner_linear} for the definition of space-time  function spaces. 

The weak formulation of the Cahn--Hilliard system \eqref{eq:CH system} reads: Find $u(\cdot,t) \in H^1(\Ga\t)$ with a continuous-in-time material derivative $\mat u(\cdot,t) \in L^2(\Ga\t)$ and $w(\cdot,t) \in H^1(\Ga\t)$ such that for all test functions $\vphi^u(\cdot,t) \in H^1(\Ga\t)$ and $\vphi^w(\cdot,t) \in H^1(\Ga\t)$ 
 \begin{subequations}
\label{eq:CH WeakSolution}
	\begin{align}
	\label{eq:CH WeakSolution1}
	\int_{\Ga\t} \mat u \vphi^u + \int_{\Ga\t} \nb_{\Ga\t} w \cdot \nb_{\Ga\t} \vphi^u = &\ \int_{\Ga\t}  f(u,\nb_{\Ga \t} u) \, \vphi^u \\
	&- \int_{\Ga\t} u \, \vphi^u (\nb_{\Ga\t} \cdot v), \nonumber\\
	\label{eq:CH WeakSolution2}
	\int_{\Ga\t} w \vphi^w  -  \int_{\Ga\t} \nb_{\Ga\t} u \cdot \nb_{\Ga\t} \vphi^w = &\ \int_{\Ga\t}  g(u,\nb_{\Ga \t} u) \, \vphi^w ,
	\end{align}
\end{subequations}
with initial data $u(\cdot,0) = u_0$ on $\Ga^0$. 

%  [cb: Should we say something about the time regularity of $u$? Elliott a. Ranner use function spaces over the space-time manifold to conquer this, we seem to omit it here.]
%[kb: We tackle this differently: by explicitly sating to have a ``time-continuous material derivative'' in $L^2(\Gat)$. I think, that is equivalent to the abstract $L^2_{H^1}$ Hilbert space formulation of Elliott and Ranner, but avoids introducing all these spaces. If you prefer, we can write a remark-like paragraph on this equivalence. 

It is important to note here that the anti-symmetric structure of the above systems (\eqref{eq:CH system} and \eqref{eq:CH WeakSolution}) will serve as a key property which will be heavily used in the stability analysis. 

Using the Leibniz formula \cite{DziukElliott_ESFEM}, an equivalent weak form reads as: Find $u(\cdot,t) \in H^1(\Ga\t)$ with a continuous-in-time material derivative $\mat u(\cdot,t) \in L^2(\Ga\t)$ and $w(\cdot,t) \in H^1(\Ga\t)$ such that for all test functions $\vphi^u(\cdot,t) \in H^1(\Ga\t)$, with $\mat \vphi^u(\cdot,t) = 0$, and $\vphi^w(\cdot,t) \in H^1(\Ga\t)$ 
 \begin{subequations}
	\label{eq:CH WeakSolution - diff}
	\begin{align}
	\label{eq:CH WeakSolution - diff - u}
	\diff \Big( \int_{\Ga\t} u \vphi^u \Big) + \int_{\Ga\t} \nb_{\Ga\t} w \cdot \nb_{\Ga\t} \vphi^u = &\ \int_{\Ga\t}  f(u,\nb_{\Ga \t} u) \, \vphi^w , \\
	\label{eq:CH WeakSolution - diff - w}
	\int_{\Ga\t} w \vphi^w  -  \int_{\Ga\t} \nb_{\Ga\t} u \cdot \nb_{\Ga\t} \vphi^w = &\ \int_{\Ga\t}  g(u,\nb_{\Ga \t} u) \, \vphi^w .
	\end{align}
\end{subequations}

We note that as solution spaces for the weak problems one can equivalently use space-time Hilbert spaces, as it was done in \cite[Definition~2.1]{ElliottRanner_CH} (denoted, e.g. by $L^\infty_{H^1}$ and $L^2_{H^1}$ therein). For more details on these spaces we refer to \cite{AlphonseElliottStinner_abstract,AlphonseElliottStinner_linear,ElliottRanner_unified}.

\subsection{Abstract formulation}

%[kb: I would prefer not to change $u$ to $\eta$. I think it can cause more harm then benefit. I would rather emphasize that $u$ can be \emph{``any''}, \emph{``arbitrary''}, etc.] 

We will use the time-dependent bilinear forms, cf.~\cite{DziukElliott_L2,DziukElliott_acta}, for any $u, \vphi \in H^1(\Ga\t)$:
\begin{equation}
\label{eq:bilinear forms}
	\begin{aligned}
		m(t;u,\vphi) = &\ \int_{\Ga\t} \!\!\!  { u \, \vphi}, 
		\qquad 
		a(t;u,\vphi) = \int_{\Ga\t} \!\!\!  \nb_{\Ga\t} u \cdot \nb_{\Ga\t}\vphi, \\
		&\ \quad r(t;v;u,\vphi) = \int_{\Ga\t} \!\!\!  { u \,\vphi \, (\nb_{\Ga\t} \cdot v)}, 
		%\qquad & 
		%b(t;v;u,\vphi) = &\ \int_{\Ga\t} \!\!\!  {\mathcal{B}(v) \,\nb_{\Ga\t} u \cdot \nb_{\Ga\t} %\vphi},
	\end{aligned}
\end{equation}
%where
%
%\begin{equation*}
%	\mathcal{B}(v) = \tfrac{1}{2} (\nb_{\Ga\t} \cdot v) \text{ Id } - \tfrac{1}{2} \big( \nb_{\Ga\t} v + (\nb_{\Ga\t} v)^T \big) . 
%	D(v), \quadwith D(v)_{ij} = \frac{1}{2} (\underline{D}_{i} v_{j} + \underline{D}_{j} v_{i}) .
%\end{equation*}
We further define $a^*(t;\cdot,\cdot) = a(t;\cdot,\cdot) + m(t;\cdot,\cdot)$. All bilinear forms are symmetric in $u$ and $\vphi$, $m$ and $a^*$ are positive definite, while $a$ is positive semi-definite. Whenever it is possible, without confusion, we will omit the omnipresent time-dependence of the bilinear forms and write $m(\cdot,\cdot)$ instead of $m(t;\cdot,\cdot)$.%, cf.~\cite{DziukElliott_L2}.

We note here that the bilinear forms directly generate the (semi-)norms, for any $u \in H^1(\Ga\t)$: 
\begin{align*}
	\|u\|_{L^2(\Ga\t)}^2 = &\ m(u,u) , \\
	\|\nb_{\Ga\t} u\|_{L^2(\Ga\t)}^2 = &\ a(u,u) , \\
	\|u\|_{H^1(\Ga\t)}^2 = &\ a^*(u,u) .
\end{align*}

The weak formulation $\eqref{eq:CH WeakSolution}$ is rewritten, using the bilinear forms from above, as 
\begin{subequations}
	\begin{align*}
	%\label{eq: BilinearWeakSolution1}
	m(\mat u,\vphi^u) + a(w,\vphi^u) = &\ m(f(u, \nb_{\Ga \t} u),\vphi^w) - r(v;u,\vphi^u) , \\
	%\label{eq: BilinearWeakSolution2}
	m(w,\vphi^w) -  a(u,\vphi^w) = &\   m(g(u, \nb_{\Ga \t} u),\vphi^w) ,
	\end{align*}
\end{subequations}
and $\eqref{eq:CH WeakSolution - diff}$ is rewritten as
\begin{subequations}
	\begin{align*}
	\diff m(u,\vphi^u) + a(w,\vphi^u)  = &\ m(f(u, \nb_{\Ga \t} u),\vphi^w),\\
	m(w,\vphi^w) - a(u,\vphi^w) = &\  m(g(u, \nb_{\Ga \t} u),\vphi^w) .
	\end{align*}
\end{subequations}

The transport formula for the above bilinear forms, \cite[Remark~3.3]{DziukElliott_L2}, is used later on, and reads, for any $u(\cdot,t), \vphi(\cdot,t) \in L^2(\Ga\t)$ with $\mat u(\cdot,t), \mat \vphi(\cdot,t) \in L^2(\Ga\t)$ for all $0\leq t\leq T$:  
%\begin{subequations}
%	\begin{align}
%	\label{eq:transport formula - m}
%	\diff m(u,\vphi) &= m(\mat u,\vphi) + m(u,\mat \vphi) + r(v;u,\vphi) .%\\
%	%\label{eq:transport formula - a}
%	%\diff a(u,\vphi) &= a(\mat u,\vphi) + a(u,\mat \vphi) + b(v;u,\vphi) .
%	\end{align}
%	\label{eq:transport formula}
%\end{subequations}
\begin{align}	
	\diff m(u,\vphi) &= m(\mat u,\vphi) + m(u,\mat \vphi) + r(v;u,\vphi) .%\\
	%\diff m(u,\vphi^u) &= m(\mat u,\vphi^u) + m(u,\mat \vphi^u) + r(v;u,\vphi^u) .%\\
	\label{eq:transport formula - m}
\end{align}

\section{Semi-discretisation on evolving surfaces}
\label{section:semidiscrete CH equations}

For the numerical solution of the above examples we consider a high-order evolving surface finite element method. 
In the following, from \cite{DziukElliott_ESFEM,DziukElliott_acta,Demlow,highorderESFEM}, we briefly recall the construction of the discrete evolving surface, the high-order evolving surface finite element space, the lift operation, and the discrete bilinear forms, etc., which are used to discretise the Cahn--Hilliard equation of Section~\ref{section:CH}. 

\subsection{Evolving surface finite elements}
\label{subsection:ESFEM}

The smooth \emph{initial} surface $\Ga(0)$ is approximated by a $k$-order interpolating discrete surface, (a continuous, piecewise polynomial interpolation of $\Ga(0)$ of degree $k$ over a reference element), denoted by $\Ga_h(0) := \Ga_h^k(0)$, with vertices $p_j \in \Ga(0)$, $j=1,\dotsc,N$, and is given by the (high-order) triangulation, with maximal mesh width $h$. In the following, we refer to $\Ga_h$ as a triangulation, and to the Lagrange points $p_j$ as nodes. 
More details and the properties of such a discrete high-order initial surface are found in \cite[Section~2]{Demlow} and \cite[Section~3]{highorderESFEM}. 

The triangulation of the surface $\Ga\t$, denoted by $\Ga_h\t := \Ga_h^k\t$, is obtained by integrating the ODE \eqref{eq:ODE for positions} (with the known velocity $v$) from time $0$ to $t$ for all the nodes $p_j$ of the initial (high-order) triangulation. The nodes $x_j\t$ are on the exact surface $\Ga\t$ for all times. 
The discrete surface $\Ga_h\t$ remains to be an interpolation of $\Gat$ for all times. We always assume that the evolving (high-order) triangles are forming an admissible triangulation of the surface $\Ga\t$, which includes quasi-uniformity, and that the discrete surface is not a global double covering, cf.\ Section~5.1 of \cite{DziukElliott_ESFEM}. For more details (e.g.~on time-uniformity of geometric bounds) we refer to \cite[Section~3]{highorderESFEM}. 

The discrete tangential gradient on the discrete surface $\Ga_h\t$, of a function $\vphi_h: \Ga_h\t \to \mathbb{R}$ , is given by $\nb_{\Ga_h\t} \vphi_h = \nb { \bar \vphi_h} - (\nb { \bar\vphi_h} \cdot \nu_h) \nu_h$, understood in an element-wise sense, with $\nu_h$ denoting the normal to $\Ga_h\t$. (The discrete tangential gradient is independent of the arbitrary smooth extension $\bar \vphi_h$ onto a small neighbourhood of $\Ga_h\t$.)

The high-order evolving surface finite element space $S_h\t \nsubseteq H^1(\Ga\t)$ on $\Ga_h\t$ is spanned by continuous, piecewise linear nodal basis functions on $\Ga_h\t$ satisfying for each node $(x_j\t)_{j=1}^N$
$$
	\phi_i(x_j\t,t) = \delta_{ij}, \quadfor i,j = 1, \dotsc, N  \quadand 0 \leq t \leq T .
$$
The finite element space is given as
$$
	S_h\t = \textnormal{span}\{\phi_1(\cdot, t), \dotsc, \phi_N(\cdot, t)\} \quadfor 0 \leq t \leq T .
$$

%In the following, we will often write $\vphi_h \in S_h\t$, or $\vphi_h(\cdot,t) \in S_h\t$ instead of $\vphi_h(\cdot,t) \in S_h(t)$ for all $0 \leq t \leq T$ (i.e.~suppressing the time interval). 

The discrete velocity $V_h$ of the surface $\Ga_h\t$ is the evolving surface finite element interpolation of the surface velocity $v$ of $\Ga\t$, i.e.
\begin{equation}
\label{eq:discrete velocity}
	V_h(\cdot,t) = \sum_{j=1}^N v(x_j\t,t) \phi_j(\cdot,t) \quadfor 0 \leq t \leq T .
\end{equation}
The discrete material derivative is, for $0 \leq t \leq T$ , given by
\begin{equation}
\label{eq:discrete material derivative on Ga_h}
	\mat_h \vphi_h(\cdot,t) = \pa_t \bar \vphi_h(\cdot,t) + V_h \cdot \nb \bar \vphi_h(\cdot,t), \quadfora \vphi_h(\cdot,t) \in S_h\t,
\end{equation}
independent of $\bar \vphi_h$ as an arbitrary smooth extension of $\vphi_h$ onto a small neighbourhood of $\Ga_h \t$. 
The key \textit{transport property} of basis functions derived in Proposition 5.4 in \cite{DziukElliott_ESFEM}, is
\begin{equation}
\label{eq:transport property}
	\mat_h \phi_j(\cdot,t) = 0 , \qquad \textrm{for} \quad j=1,\dotsc,N \quadand 0 \leq t \leq T .
\end{equation}

\subsection{Lift}
\label{section:lift}

Following \cite{DziukElliott_ESFEM,Demlow}, we define the \emph{lift} operator $\cdot^\ell$ to compare functions on $\Ga_h\t$, with a sufficiently small $h \leq h_0$ (such that $\Ga_h\t$ is in a sufficiently small neighbourhood of $\Ga\t$),  with functions on $\Ga\t$. For functions $\vphi_h:\Ga_h\t \to \R$, we define the lift as 
\begin{equation}
\label{eq:lift definition}
	\vphi_h^\ell \colon \Ga\t \to\R \quad \text{with} \quad \vphi_h^\ell(y)=\vphi_h(x), \quad \forall x\in\Ga_h\t \quadfor 0 \leq t \leq T ,
\end{equation} 
where $y=y(x,t) \in \Ga\t$ is the \emph{unique} point on $\Ga\t$ with $x-y$  orthogonal to the tangent space $T_y\Ga\t$.
The \emph{inverse lift} $\vphi^{-\ell}:\Ga_h\t \to \R$ denotes a function whose lift is $\vphi:\Ga\t \to \R$.
Finally, the lifted finite element space is denoted by $S_h^\ell\t$, and is given as $$S_h^\ell\t = \big\{  \vphi_h^\ell \mid \vphi_h \in S_h\t \big\} , \quadfor 0 \leq t \leq T .$$

\subsection{Discrete bilinear forms}

The time-dependent discrete bilinear forms on $S_h\t$, i.e.~the discrete counterparts of $m,a$ and $g$, are given, for any $u_h,\vphi_h \in S_h\t$, by
\begin{equation}
\label{eq:discrete bilinear forms}
\begin{aligned}
m_h(t;u_h,\vphi_h) &= \int_{\Ga_h \t} \!\!\!\!\! { u_h \, \vphi_h}, 
\qquad 
a_h(t;u_h,\vphi_h) = \int_{\Ga_h \t} \!\!\!\!\! { \nb_{\Ga_h \t} u_h \cdot \nb_{\Ga_h \t} \vphi_h} , \\
& \quad r_h(t;V_h;u_h,\vphi_h) = \ \int_{\Ga_h\t} \!\!\!\!\! { u_h \,\vphi_h \, (\nb_{\Ga_h \t} \cdot V_h)}.
%\qquad & 
%b(t;v;u,\vphi) = &\ \int_{\Ga\t} \!\!\!  {\mathcal{B}(v) \,\nb_{\Ga\t} u \cdot \nb_{\Ga\t} %\vphi},
\end{aligned}
\end{equation}
%\begin{equation}
%\label{eq:discrete bilinear forms}
%	\begin{aligned}
%		m_h(t;u_h,\vphi_h) = \int_{\Ga_h \t} \!\!\! { u_h \, \vphi_h}, 
%		\qquad 
%		a_h(t;u_h,\vphi_h) = \int_{\Ga_h \t} \!\!\! { \nb_{\Ga_h \t} u_h \cdot \nb_{\Ga_h \t} \vphi_h} \\
%		\begin{aligned}
%			g_h(t;V_h;u_h,\vphi_h) = \ \int_{\Ga_h\t} \!\!\! { u \,\vphi_h \, (\nb_{\Ga_h \t} \cdot V_h)}, %\\
%%			b_h(t;V_h;u_h,\vphi_h) = &\ \int_{\Ga_h\t} \!\!\! {\mathcal{B}(V_h) \,\nb_{\Ga_h \t} u_h \cdot \nb_{\Ga_h \t} \vphi_h}, \qquad \quad 
%		\end{aligned}
%	\end{aligned}
%\end{equation}
%\begin{align*}
%m_h(t;u_h,\phi_h) := \int_{\Ga_h\t} {u_h \, \phi_h}, \qquad
%a_h(t;u_h,\phi_h) := \int_{\Ga_h\t} {\nbgh u_h \cdot \nbgh \phi_h},
%\end{align*}
%with $a_h^*(t;\cdot,\cdot) := a_h(t;\cdot,\cdot) + m_h(t;\cdot,\cdot)$ correspondingly and
%\begin{align*}
%&g_h(t;V_h;u_h,\phi_h) := \int_{\Ga_h\t} {u_h \, \phi_h \, (\nbgh \cdot V_h)}, \\
%&b_h(t;V_h;u_h,\phi_h) := \int_{\Ga_h\t} {\mathcal{B}_h(V_h) \, \nbgh u_h \cdot \nbgh \phi_h},
%%\label{eq: BilinearFormsDiscrete}
%\end{align*}
%where
%%
%\begin{equation}
%	\mathcal{B}_h(V_h) = \tfrac{1}{2} (\nb_{\Ga_h \t} \cdot V_h) \, \text{Id} - \tfrac{1}{2} \big( \nb_{\Ga_h \t} V_h + (\nb_{\Ga_h \t} V_h)^T \big) . 
%%	D_h(V_h), \quadwith D_h(V_h)_{ij} = \frac{1}{2} (\underline{D}_{h,i} (V_h)_{j} + \underline{D}_{h,j} V_{h,i}) .
%\label{eq:BhDh}
%\end{equation}
As in the continuous case we let $a_h^*(t;\cdot,\cdot) = a_h(t;\cdot,\cdot) + m_h(t;\cdot,\cdot)$. The discrete bilinear forms, clearly inherit the properties of their continuous counterparts, such as the transport formula \eqref{eq:transport formula - m}, see, e.g.~\cite{DziukElliott_L2,highorderESFEM}.

As in the continuous case, the discrete bilinear forms directly generate the discrete (semi-)norms, for any $u_h \in S_h\t$, 
\begin{align*}
\|u_h\|_{L^2(\Ga_h\t)}^2 = &\ m_h(u_h,u_h) , \\
\|\nb_{\Ga_h\t} u_h\|_{L^2(\Ga_h\t)}^2 = &\ a_h(u_h,u_h) , \\
\|u_h\|_{H^1(\Ga_h\t)}^2 = &\ a_h^*(u_h,u_h) .
\end{align*}

According to \cite{DziukElliott_ESFEM,Demlow}, the discrete norms and their continuous counterparts are $h$-uniformly equivalent, for any $\vphi_h \in S_h\t$ and $1 \leq q \leq \infty$,
\begin{equation}
\label{eq:norm equivalence}
	\begin{aligned}
		c \|\vphi_h^\ell\|_{L^q(\Gat)} &\leq \|\vphi_h\|_{L^q(\Ga_h \t)} \leq C \|\vphi_h^\ell\|_{L^q(\Gat)} , \\
		c \|\nb_{\Ga \t} \vphi_h^\ell\|_{L^q(\Gat)} &\leq \|\nb_{\Ga_h \t} \vphi_h\|_{L^q(\Ga_h \t} \leq C \|\nb_{\Ga \t}\vphi_h^\ell\|_{L^q(\Gat)} .
	\end{aligned}
\end{equation}

\subsection{Semi-discrete problem}
\label{section:semidiscrete problem}

The semi-discrete problem corresponding to the Cahn--Hilliard equation \eqref{eq:CH WeakSolution} reads: Find a solution $u_h(\cdot,t) \in S_h\t$ with continuous-in-time discrete material derivative $\mat_h u_h(\cdot,t) \in S_h\t$ and $w_h(\cdot,t) \in S_h\t$ such that for all test functions $\vphi_h^u(\cdot,t) \in S_h\t$ and $\vphi_h^w(\cdot,t) \in S_h\t$ 
% with integrals
%\begin{subequations}
%	\label{eq:semidiscrete problem}
%	\begin{align}
%	\label{eq:semidiscrete problem - u}
%	\int_{\Ga_h\t} \Big(\mat_h u_h \vphi_h^u + u_h \, \vphi_h^u (\nb_{\Ga_h\t} \cdot V_h)\Big) + \int_{\Ga_h\t} \nb_{\Ga_h\t} w_h \cdot \nb_{\Ga_h\t} \vphi_h^u = &\ 0, \\
%	\label{eq:semidiscrete problem - w}
%	\int_{\Ga_h\t} \eps \, \nb_{\Ga_h\t} u_h \cdot \nb_{\Ga_h\t} \vphi_h^w + \int_{\Ga_h\t} \eps\inv\,  f'(u_h) \, \vphi_h^w - \int_{\Ga_h\t} w_h \vphi_h^w = &\ 0,
%	\end{align}
%\end{subequations}
 \begin{subequations}
	\label{eq:semi-discrete problem - pre}
	\begin{align}
	\label{eq:semi-discrete problem - pre - u}
	m_h(\mat_h u_h , \vphi_h^u) + a_h(w_h , \vphi_h^u ) = &\ m_h( f(u_h, \nb_{\Ga_h \t} u_h), \vphi_h^u)\\
	&- r_h(V_h; u_h , \vphi_h^u) , \nonumber\\
	\label{eq:semi-discrete problem - pre - w}
	m_h(w_h , \vphi_h^w) -  a_h ( u_h , \vphi_h^w) = &\ m_h( g(u_h, \nb_{\Ga_h \t} u_h) , \vphi_h^w) ,
	\end{align}
\end{subequations} 
with given initial data $u_h(\cdot,0) = u_h^0$ on $\Ga_h^0$.

Equivalently, the semi-discrete problem corresponding to the weak form \eqref{eq:CH WeakSolution - diff}, using the discrete version of the transport formula \eqref{eq:transport formula - m} for \eqref{eq:semi-discrete problem - pre - u}, reads: Find a solution $u_h(\cdot,t) \in S_h\t$ with continuous-in-time discrete material derivative $\mat_h u_h(\cdot,t) \in S_h\t$ and $w_h(\cdot,t) \in S_h\t$ such that for all test functions $\vphi_h^u(\cdot,t) \in S_h\t$ with $\mat_h \vphi_h^u =0$ and $\vphi_h^w(\cdot,t) \in S_h\t$ 
% with integrals
%\begin{subequations}
%\label{eq:semi-discrete problem - pre - diff}
%	\begin{align} 
%	\label{eq:semi-discrete problem - pre - diff - u}
%	\diff \bigg( \int_{\Ga_h\t}  u_h \vphi_h^u \bigg) + \int_{\Ga_h\t} \nb_{\Ga_h\t} W_h \cdot \nb_{\Ga_h\t} \vphi_h^u = &\ \int_{\Ga_h\t} u_h \, \partial_h^{\bullet} \vphi_h^u ,
%	\\ 
%	\label{eq:semi-discrete problem - pre - diff - w}
%	\int_{\Ga_h\t} \eps \nb_{\Ga_h\t} u_h \cdot \nb_{\Ga_h\t} \vphi_h^w + \eps\inv W'(u_h) \vphi_h^w  = &\ \int_{\Ga_h\t} w_h \vphi_h^w ,
%	\end{align}
%\end{subequations}
 \begin{subequations}
	\label{eq:semi-discrete problem - pre - diff}
	\begin{align} 
	\label{eq:semi-discrete problem - pre - diff - u}
	\diff m_h(u_h , \vphi_h^u) + a_h(w_h , \vphi_h^u) = &\ m_h( f(u_h,  \nb_{\Ga_h \t} u_h) , \vphi_h^u) ,
	\\ 
	\label{eq:semi-discrete problem - pre - diff - w}
	m_h(w_h , \vphi_h^w) - a_h ( u_h , \vphi_h^w) = &\ m_h( g(u_h,  \nb_{\Ga_h \t} u_h) , \vphi_h^w) ,
	\end{align}
\end{subequations} 
again, with given initial data $u_h(\cdot,0) = u_h^0$ on $\Ga_h^0$.

%\redon 
%\sout{According to the following result, proved in \cite{ElliottRanner_CH}, the semi-discrete problem is well-posed, and the discrete material derivatives of both solution components are continuous in time. }
%\begin{proposition}[{\cite[Theorem~3.1]{ElliottRanner_CH}}]
%\label{proposition:semi-discrete well-posedness}
%	In the above semi-discrete setting, including the assumptions on the mesh, initial data, and non-linearities, the semi-discrete problem \eqref{eq:semi-discrete problem - pre} has a unique solution $u_h(\cdot,t), w_h(\cdot,t) \in S_h(t)$ with continuous-in-time discrete material derivatives $\mat_h u_h(\cdot,t), \mat_h w_h(\cdot,t) \in S_h(t)$, i.e.~their nodal values are $C^1$ in time.
%	
%	Furthermore, there exists $h_0 > 0$ and $C_E(\|u_h^0\|_{H^1(\Ga_h^0)}) > 0$ (also depending on $T$) such that for all $h \leq h_0$ the discrete energy $E_h(u_h) := \int_{\Ga_h\t} \tfrac{1}{2} |\nbgh u_h|^2 + W(u_h)$ satisfies
%	\begin{align*}
%		\sup_{t \in (0,T)} E_h(u_h(\cdot,t)) + \frac12 \int_0^T \|\nbgh w_h(\cdot,t)\|_{L^2(\Ga_h\t)}^2 \d t \leq  C_E(\|u_h^0\|_{H^1(\Ga_h^0)}) .
%	\end{align*}
%\end{proposition}

By a direct modification of the proof of Theorem~3.1 in \cite{ElliottRanner_CH} (based on standard ODE theory), we obtain that the above semi-discrete problem is well-posed, and the discrete material derivatives of both solution components are continuous in time, i.e.~the nodal values of the semi-discrete solution are both $C^1$ in time.
Therefore, for a given $u_h(\cdot,0) = u_h^0$, the initial value $w_h(\cdot,0) = w_h^0$ is obtained by solving the elliptic problem \eqref{eq:semi-discrete problem - pre - w} (or \eqref{eq:semi-discrete problem - pre - diff - w}) at time $t = 0$.

%\redoff 

\subsection{Matrix--vector formulation}

We collect the nodal values of $u_h(\cdot,t) = \sum_{j=1}^N u_j\t \phi_j(\cdot,t) \in S_h\t$ and $w_h(\cdot,t) = \sum_{j=1}^N w_j\t \phi_j(\cdot,t) \in S_h\t$, the solution pair of the semi-discrete problem \eqref{eq:semi-discrete problem - pre}, into the vectors $\bfu \t = (u_1\t,\ldots,u_N \t) \in \R^N$ and $\bfw \t = (w_1\t,\ldots,w_N \t) \in \R^N$. We define the time-dependent matrices, the mass and stiffness matrix, corresponding to the bilinear forms $m_h$ and $a_h$, respectively, and the non-linear terms involving $f$ and $g$:
\begin{equation}
\label{eq:FEM matrices}
\begin{aligned}
	\bfM\t|_{kj} = &\ m_h\big( \phi_j(\cdot,t) , \phi_k(\cdot,t) \big) , \\
	\bfA\t|_{kj} = &\ a_h\big( \phi_j(\cdot,t) , \phi_k(\cdot,t) \big) , \\
	 \bff(\bfu\t)|_{k}  = &\ m_h\big( f(u_h(\cdot,t), \nb_{\Ga_h \t} u_h(\cdot, t) ) , \phi_k(\cdot,t) \big) , \\
	 \bfg(\bfu\t)|_{k}  = &\ m_h\big( g(u_h(\cdot,t), \nb_{\Ga_h \t} u_h(\cdot, t) ) , \phi_k(\cdot,t) \big) , \\
\end{aligned}
\qquad j,k = 1,\dotsc,N .
\end{equation}
We further define the matrix corresponding to the bilinear form $a_h^*$:
\begin{equation*}
	\bfK\t = \bfM\t + \bfA\t .
\end{equation*}
We also note that, via the transport property \eqref{eq:transport property}, the time derivative of the mass matrix is given by
\begin{equation*}
	\bfMd\t|_{kj} = r_h(V_h(\cdot,t) ; \phi_j(\cdot,t) , \phi_k(\cdot,t)) .
\end{equation*}

The discrete material derivative of any surface finite element function $u_h(\cdot,t) \in S_h\t$, with nodal values $\bfu\t$, again by using the transport property \eqref{eq:transport property} of the basis functions and the product rule, is given by
\begin{equation}
\label{eq:discrete mat of an ESFEM func}
	\mat_h u_h(\cdot,t) = \mat_h \bigg( \sum_{j=1}^N u_j\t \phi_j(\cdot,t) \bigg) = \sum_{j=1}^N \dot u_j\t \phi_j(\cdot,t) .
\end{equation}
Thus, the nodal values of $\mat_h u_h$ are given by the vector $\dot \bfu\t$.

The finite element semi-discretisation of the Cahn--Hilliard equation \eqref{eq:semi-discrete problem - pre} then reads:
\begin{subequations}
\label{eq:matrix-vector form - pre}
	\begin{align}
		\label{eq:matrix-vector form - pre - u}
		\bfM\t \dot\bfu\t + \bfA\t \bfw\t = &\ \bff(\bfu\t) -\bfMd\t \bfu\t , \\
		\label{eq:matrix-vector form - pre - w}
		\bfM\t \bfw\t - \bfA\t \bfu\t = &\ \bfg(\bfu\t) .
	\end{align}
\end{subequations}
The anti-symmetric structure of \eqref{eq:matrix-vector form - pre}, which is shared with \eqref{eq:CH system} and \eqref{eq:semi-discrete problem - pre}, is recognised best in the rewritten form:
\begin{equation*}
\begin{bmatrix}
\bfM\t \dfrac{\d}{\d t} & \bfA\t \\
-\bfA\t \vphantom{\dfrac{\d}{\d t}} & \bfM\t
\end{bmatrix}
\begin{bmatrix}
\vphantom{\dfrac{\d}{\d t}} \bfu\t \\
\vphantom{\dfrac{\d}{\d t}} \bfw\t
\end{bmatrix}
= 
\begin{bmatrix}
\vphantom{\dfrac{\d}{\d t}} \bff(\bfu\t) -\bfMd\t \bfu\t \\
\vphantom{\dfrac{\d}{\d t}} \bfg(\bfu\t)
\end{bmatrix} .
\end{equation*}

In order to exploit this favourable structure, the stability analysis will use the matrix--vector system \eqref{eq:matrix-vector form - pre}.

For computations, it is however more advantageous to use the equivalent matrix--vector formulation
\begin{subequations}
	\label{eq:matrix-vector form - pre - diff}
	\begin{align}
	\label{eq:matrix-vector form - pre - diff - u}
	\diff \Big( \bfM\t \bfu\t \Big) + \bfA\t \bfw\t = &\ \bff(\bfu\t) , \\
	\label{eq:matrix-vector form - pre - diff - w}
	\bfM\t \bfw\t - \bfA\t \bfu\t = &\  \bfg(\bfu\t) ,
	\end{align}
\end{subequations}
where the surface velocity $V_h$ does not appear directly, as compared to the term with $\dot \bfM\t$ in \eqref{eq:matrix-vector form - pre}.

The $C^1$-regularity results stated after \eqref{eq:semi-discrete problem - pre - diff} translate to the modified system as well: the solutions $\bfu\t$ and $\bfw\t$ are both in $C^1(0,T;\R^N)$.

\subsection{A modified problem}
\label{section:modified problem}

The initial value $\bfu(0)$ is chosen suitably, on the other hand the initial value $\bfw(0)$ is obtained, from the second equation of the system \eqref{eq:matrix-vector form - pre}, or equivalently \eqref{eq:matrix-vector form - pre - diff}. Our error analysis requires the errors in both initial values to be $O(h^{k+1})$ in the $H^1(\Ga_h)$ norm. For $\bfu$ this is achieved using the Ritz map of $u^0$ (in which case the initial error in $\bfu$ will \emph{vanish}), however, such an error estimate is still not feasible for $\bfw$. Instead we transform the second equation such that the initial error in $\bfw$ also vanishes, in exchange for a time-independent (and small) inhomogeneity. 

To obtain optimal-order error estimates we modify the equation \eqref{eq:matrix-vector form - pre - w} (and equivalently \eqref{eq:matrix-vector form - pre - diff - w} as well) using a time-independent correction term. Let $\bar{\bfw}(0) \in \R^N$ denote the solution obtained from \eqref{eq:matrix-vector form - pre - w} at time $t = 0$, and let $\ws(0) \in \R^N$ contain the nodal values of the Ritz map of $w(0)$, and set
\begin{equation}
\label{eq:vartheta}
	\bfvartheta = \bfM(0) \big( \ws(0) - \bar{\bfw}(0) \big) \in \R^N .
\end{equation}
The second equation is then modified, such that the system \eqref{eq:matrix-vector form - pre} reads:
\begin{subequations}
\label{eq:matrix-vector form}
	\begin{align}
	\label{eq:matrix-vector form - u}
	\bfM\t \dot\bfu\t + \bfA\t \bfw\t = &\ \bff(\bfu\t) -\bfMd\t \bfu\t , \\
	\label{eq:matrix-vector form - w}
	\bfM\t \bfw\t - \bfA\t \bfu\t = &\ \bfg(\bfu\t) + \bfvartheta .
	\end{align}
\end{subequations}
Similarly, the equivalent system \eqref{eq:matrix-vector form - pre - diff} is modified to:
\begin{subequations}
	\label{eq:matrix-vector form - diff}
	\begin{align}
	\label{eq:matrix-vector form - diff - u}
	\diff \Big( \bfM\t \bfu\t \Big) + \bfA\t \bfw\t = &\ \bff(\bfu\t) , \\
	\label{eq:matrix-vector form - diff - w}
	\bfM\t \bfw\t - \bfA\t \bfu\t = &\ \bfg(\bfu\t) + \bfvartheta .
	\end{align}
\end{subequations}
The semi-discrete finite element formulations \eqref{eq:semi-discrete problem - pre} and \eqref{eq:semi-discrete problem - pre - diff} are modified accordingly. 

We recall that the solutions of the modified semi-discrete problems \eqref{eq:matrix-vector form} and \eqref{eq:matrix-vector form - diff} are both $C^1$ in time.

The initial value $\bfw(0)$ is obtained by solving the elliptic problem \eqref{eq:matrix-vector form - w} at $t = 0$, which, via \eqref{eq:vartheta} and \eqref{eq:matrix-vector form - pre - w}, yields
\begin{equation}
\label{eq:initial data for w}
\begin{aligned}
	\bfM(0) \bfw(0) = &\ \bfA(0) \bfu(0) + \bfg(\bfu(0)) + \bfvartheta \\
	= &\ \bfM(0) \bar{\bfw}(0) + \bfvartheta \\
%	\\
%	= &\ \bfM(0) \bar{\bfw}(0) + \bfM(0) \big( \ws(0) - \bar{\bfw}(0) \big) \\
	= &\ \bfM(0) \ws(0) .
\end{aligned}
\end{equation}

The advantage of the modified system is, that the errors in the initial data for $\bfw$ are included into the problem similarly to a residual term, which allows for a feasible weaker norm estimate of this term (in fact we will show later, that it is a defect term). Note that for the linear case, this is nothing else but shifting the solutions to a particular initial value using a constant inhomogeneity.

\section{Error estimates}
\label{section:error estimates}

We next state a new convergence result for the evolving surface finite element semi-discretisation  of polynomial degree $k \geq 1$ if the nonlinearities only depend on $u$, and of degree $k \geq 2$ if they also depend on $\nbg u$. In the theorem below, and in the remainder of this work, these two cases will be referred to as (a) and (b), respectively.  

\begin{theorem}
\label{theorem:semi-discrete convergence}
	Let $u$ and $w$ be the weak solutions of the Cahn--Hilliard equation on an evolving surface \eqref{eq:CH system}, and assume that they satisfy the regularity conditions \eqref{eq:regularity conditions}.
	
	Then, there exists an $h_0 > 0$ such that for all $h \leq h_0$ the errors between the solutions $u$ and $w$ and the evolving surface finite element solutions $u_h$ and $w_h$ of degree $k$, with nodal vectors solving the modified system \eqref{eq:matrix-vector form - diff}, and choosing the Ritz map of $u^0$ for the initial value $u_h(\cdot,0)$, satisfy the optimal-order uniform-in-time error estimates in both variables, for $0 \leq t \leq T$:
	
	 (a) For general nonlinearities $f$ and $g$ depending on $(u,\nbg u)$, for at least quadratic finite elements $k \geq 2$:
	\begin{equation*}
		\|u_h^\ell(\cdot,t) - u(\cdot,t)\|_{H^1(\Ga\t)} \leq C h^{k} ,
		\andquad
		\|w_h^\ell(\cdot,t) - w(\cdot,t)\|_{H^1(\Ga\t)} \leq C h^{k} ,
	\end{equation*}
	whereas the material derivative of the error in $u$ satisfies
	\begin{equation*}
	\begin{aligned}
		&\ \bigg( \int_0^t \|\mat (u_h^\ell(\cdot,s) - u(\cdot,s))\|_{H^1(\Ga(s))}^2  \d s \bigg)^{1/2} \leq C h^{k} .
	\end{aligned}
	\end{equation*}
	
	(b) If the nonlinearities are both \emph{independent} of $\nbg u$, then for any $k\geq 1$:
	\begin{alignat*}{3}
		\|u_h^\ell(\cdot,t) - u(\cdot,t)\|_{L^2(\Ga\t)} + h \|u_h^\ell(\cdot,t) - u(\cdot,t)\|_{H^1(\Ga\t)} \leq &\ C h^{k+1} , \\
		%		\intertext{and} 
		\|w_h^\ell(\cdot,t) - w(\cdot,t)\|_{L^2(\Ga\t)} + h \|w_h^\ell(\cdot,t) - w(\cdot,t)\|_{H^1(\Ga\t)} \leq &\ C h^{k+1} ,
	\end{alignat*}
	whereas the material derivative of the error in $u$ satisfies
	\begin{equation*}
		\begin{aligned}
			&\ \bigg( \int_0^t \|\mat (u_h^\ell(\cdot,s) - u(\cdot,s))\|_{L^2(\Ga(s))}^2 \\
			&\ \qquad + h \|\mat (u_h^\ell(\cdot,s) - u(\cdot,s))\|_{H^1(\Ga(s))}^2  \d s \bigg)^{1/2} \leq C h^{k+1} .
		\end{aligned}
	\end{equation*}

	The constant $C > 0$ is independent of $h$ and $t$, but depends on the bounds of the Sobolev norms of the solution $u$ and $w$, on the surface evolution, and on the length of the time interval $T$. 
%	We note, that the bound depends on an arbitrary $0 <\eps < 1$.
\end{theorem}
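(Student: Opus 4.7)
The plan is to follow the classical stability plus consistency paradigm on top of a Ritz-map error splitting. For each component we write
\begin{equation*}
	u_h - u = (u_h - \Rh u) + (\Rh u - u) \andquad w_h - w = (w_h - \Rh w) + (\Rh w - w),
\end{equation*}
where $\Rh$ denotes the (time-dependent) Ritz projection associated with $a_h^*$. The Ritz-map components and their material derivatives are controlled by the standard high-order geometric and interpolation estimates assembled in Section~\ref{section:consistency}, giving $O(h^{k+1})$ in $L^2$ and $O(h^k)$ in $H^1$ uniformly in time. The remaining nodal-vector differences $\eu = \bfu - \us$ and $\ew = \bfw - \ws$ are the object of a stability analysis in matrix--vector form. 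Subtracting the equations formally satisfied by $\us, \ws$ from \eqref{eq:matrix-vector form} produces an error system
\begin{equation*}
\begin{aligned}
	\bfM \doteu + \bfA \ew &= \bigl(\bff(\bfu) - \bff(\us)\bigr) - \bfMd \eu - \du, \\
	\bfM \ew - \bfA \eu &= \bigl(\bfg(\bfu) - \bfg(\us)\bigr) - \dw,
\end{aligned}
\end{equation*}
where $\du, \dw$ are defects measuring how far the Ritz map fails to solve the semi-discrete equations; the modification \eqref{eq:vartheta} is designed precisely so that $\ew(0) = 0$ and the correction $\bfvartheta$ enters $\dw$ as a time-independent defect contribution.

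The stability argument exploits the anti-symmetric block structure of this system. A first energy estimate pairs the $\eu$-equation with $\ew$ and the $\ew$-equation with $-\doteu$, so that the $\bfM$-cross terms cancel by symmetry and the $\bfA$-contributions combine, via the transport formula, into $\ew^T \bfA \ew + \tfrac12 \diff(\eu^T \bfA \eu) + \text{(lower-order geometric terms)}$. This yields a bound on $\normA{\eu(t)}^2 + \int_0^t \normA{\ew(s)}^2 \d s$ in terms of $\normM{\eu}$, defects, and nonlinear differences. A companion estimate, obtained by differentiating the $\ew$-equation in time and pairing with suitable combinations of $\doteu$ and $\dotew$, closes the control by also bounding $\normM{\ew(t)}^2$ and $\int_0^t \normM{\doteu(s)}^2 \d s$, which is needed both for the theorem's material-derivative bound and to absorb time derivatives of the defects on the right-hand side. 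The two estimates, schematised in Figure~\ref{fig:energy estimates}, then close via Gronwall's lemma into
\begin{equation*}
	\normK{\eu(t)}^2 + \normM{\ew(t)}^2 + \int_0^t \normK{\doteu(s)}^2 \, \d s \leq C \int_0^t \bigl( \normK{\du}^2 + \normM{\dotdu}^2 + \normK{\dw}^2 + \normM{\dotdw}^2 \bigr) \d s,
\end{equation*}
provided the nonlinear differences $\bff(\bfu) - \bff(\us)$ and $\bfg(\bfu) - \bfg(\us)$ can be absorbed.

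Absorbing those differences under only local Lipschitz hypotheses on $f,g,\pa_i f,\pa_i g$ requires uniform-in-time $L^\infty$ control on $u_h$ (and on $\nbgh u_h$ in case (a)). The plan is a standard bootstrap: on a maximal sub-interval where $u_h$ (respectively $\nbgh u_h$) lies in a compact tube around the smooth exact $u$ (respectively $\nbg u$), local Lipschitz constants depending only on the exact solution become available, the stability estimate closes, and an inverse estimate applied to $\eu$ upgrades the resulting $H^1$-bound to a strict improvement of the tube for $h$ sufficiently small. The critical scaling $h^{-d/2}$ against the $H^1$ error produces the $k$-restriction of the theorem: in case (b) the $L^2$-bound $O(h^{k+1})$ combined with an $L^\infty$-to-$L^2$ inverse estimate suffices for every $k \geq 1$, whereas case (a) requires $h^{-d/2}\cdot h^{k} \to 0$ for $W^{1,\infty}$-control of $u_h$, forcing $k \geq 2$.

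The consistency step bounds $\du,\dw$ and $\dotdu,\dotdw$ by $O(h^{k+1})$ in the relevant norms using the high-order evolving-surface geometric estimates of \cite{highorderESFEM}, Ritz map error and material-derivative estimates, interpolation bounds, and the approximation of $V_h - v$; the time-independent $\bfvartheta$-part of $\dw$ has size bounded by the initial consistency defect and is therefore of the correct order. Combining stability and consistency, undoing the Ritz splitting, and transferring between $\Ga_h\t$ and $\Gat$ via the norm equivalence \eqref{eq:norm equivalence} yields the asserted uniform-in-time estimates. The main obstacle, in my view, is the coordinated running of the multiple energy estimates so that every right-hand side term (defect time derivatives, nonlinear differences, and the $\bfvartheta$-contribution) can be absorbed without losing any power of $h$, while the $L^\infty$-bootstrap is simultaneously maintained; the anti-symmetric cancellation in the matrix--vector formulation is the essential structural property that makes this simultaneous control feasible.
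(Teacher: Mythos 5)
Your proposal follows essentially the same route as the paper: Ritz-map splitting, matrix--vector error equations, two coupled energy estimates exploiting the anti-symmetric structure (testing with the errors and with their time derivatives, plus a time-differentiated $\ew$-equation), a $W^{1,\infty}$-bootstrap via inverse estimates to handle the locally Lipschitz nonlinearities, and $O(h^{k+1})$ (resp.\ $O(h^k)$) defect bounds; even your explanation of the $k\geq 2$ restriction matches the paper's inverse-estimate argument. Two imprecisions in your summarised Gronwall outcome are worth flagging, though. First, your right-hand side contains $\normK{\du}^2$ and $\normK{\dw}^2$: the stability bound must control the errors in terms of the defects measured only in the $\bfM$ (i.e.\ $L^2$) norm, since the consistency analysis cannot deliver optimal-order $H^1$ defect bounds; the energy estimates as you set them up do in fact produce terms of the form $\bfe^T\bfM\bfd$, so the correct norms come out naturally, but as written your inequality would lose a power of $h$. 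Second, your left-hand side carries only $\normM{\ew(t)}^2$ uniformly in time, whereas the theorem asserts a uniform-in-time $H^1$ bound for $w$ (the very improvement over Elliott--Ranner); the pairing of the differentiated $\ew$-equation with $\dotew$ that you mention yields $\tfrac12\diff\|\ew\|^2_{\bfA\t}$ on the left, so upon integration you obtain $\|\ew(t)\|^2_{\bfK\t}$ pointwise in time --- you should carry that term through to the final estimate.
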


Sufficient regularity conditions on $u=u(\cdot,t
)$ and $w=w(\cdot,t)$ required by Theorem~\ref{theorem:semi-discrete convergence} are:
\begin{equation}
\label{eq:regularity conditions}
	\begin{aligned}
		&\ u , \mat u , (\mat)^{(2)} u \in H^{k+1}(\Ga\t) , \quad
		w , \mat w \in H^{k+1}(\Ga\t)  \quad \text{$L^2$-in-time} , \\
		&\ u\in W^{2,\infty}(\Ga\t) \cap H^{k+1}(\Ga\t) , \quad w \in H^{k+1}(\Ga\t) \quad \text{uniformly in time} , \\
		&\ \hspace*{-0.5cm}\text{and for the surface velocity:} \\
		&\ v , \mat v \in W^{k+1,\infty}(\Ga\t)  \quad \text{uniformly in time} .
	\end{aligned}
\end{equation} 
 Our result proves uniform-in-time error estimates in the $H^1$ and $L^2$ norms (in both cases (a) and (b)) for the error in $u$ and $w$ and for the errors in the material derivatives of $u$ (only sub-optimal in (a)). 

The classical Cahn--Hilliard equation (with a double-well potential) is naturally recovered in case (b), and slightly improves the result of \cite[Theorem~5.1]{ElliottRanner_CH}, proving a new time uniform estimate for the chemical potential.

Comparing our regularity assumptions to \cite[Theorem~5.1]{ElliottRanner_CH}: The spatial $H^{k+1}(\Ga\t)$ regularity assumptions \eqref{eq:regularity conditions} are required since we are using isoparametric evolving surface finite elements of degree $k$, whereas the assumptions on (further) material derivatives and the $L^\infty$-type and regularity assumptions on $u$ and $w$, and $\mat v$ \eqref{eq:regularity conditions} are required to obtain the uniform-in-time error estimates, via the new stability proof presented below.

\medskip
Theorem~\ref{theorem:semi-discrete convergence} is proved by studying the questions of stability and consistency. 
The consistency of the algorithm is shown by proving high-order estimates for the defects (the error obtained by inserting the Ritz map of the exact solutions into the method), which are obtained by using geometric and approximation error estimates for high-order evolving surface finite elements from \cite{highorderESFEM}, which combines techniques of \cite{DziukElliott_ESFEM,DziukElliott_L2} and \cite{Demlow}.

The main issue in the proof is stability, i.e.~a mesh independent, uniform-in-time bound of the errors in terms of the defects. The main idea of the stability proof was originally developed for Willmore flow \cite{Willmore}, and it relies on \emph{energy estimates} that exploit the \emph{anti-symmetric structure} of the Cahn--Hilliard equation, see \eqref{eq:CH system}, \eqref{eq:semi-discrete problem - pre}, and \eqref{eq:matrix-vector form}. The basic idea of the stability proof is concisely sketched in Figure~\ref{fig:energy estimates}.
In order to estimate the non-linear terms, a key issue in the stability proof is to ensure that the $W^{1,\infty}$ norm of the error in $u$ remains bounded. The uniform-in-time $H^1$ norm error bounds together with an inverse estimate provide a bound in the $W^{1,\infty}$ norm. Similarly, it is also possible to show such a $W^{1,\infty}$ norm bound for the error in $w$, provided by our uniform-in-time $H^1$ norm bounds in both $u$ and $w$.

%with a possibly non-linear operator $\mathcal{A}$. Note that $\mathcal{A} = \text{Id}$ yields the Laplace--Beltrami operator. Now, we are able to use the uniform-in-time convergence result for $w$ to bound $w_h$ for a $t^* > 0$ by
%%
%\begin{equation*}
%\begin{aligned}
%\|w_h(\cdot,t)\|_{L^\infty(\Ga_h \t)} &= \|w_h^*(\cdot,t) - e_{w_h}(\cdot,t)\|_{L^\infty(\Ga_h \t)} \\
%&\leq \|w_h^*(\cdot,t)\|_{L^\infty(\Ga_h \t)} + \|e_{w_h}(\cdot,t)\|_{L^\infty(\Ga_h \t)} \leq C,
%\end{aligned}
%\end{equation*}
%for all $ 0 \leq t \leq t^*$, and then extend the bound to the whole time domain $t \in [0,T]$, as will be done in the proof of the stability result in estimate $\eqref{eq: InverseInequality}$.
%
%In addition, we obtain an optimal-order $L^2$-in-time convergence estimate for the full $H^1$ norm error for the material derivative $\partial_h^{\bullet} u$.

\section{Stability}
\label{section:stability}

\subsection{Preliminaries}

This section is dedicated to the definition of a few concepts, such as the comparison of various quantities on different discrete surfaces and a generalised Ritz map, which are all used throughout the stability analysis.

The finite element matrices $\bfM\t$, $\bfA\t$, and $\bfK\t$ induce (semi-)norms which correspond to discrete Sobolev (semi-)norms:
\begin{equation}
\label{eq: MatrixNorms}
	\begin{aligned}
		\|\bfw\|_{\bfM\t}^2 &=  \bfwt \bfM\t \bfw = \|w_h\|_{L^2{(\Ga_h\t)}}^2,\\
		\|\bfw\|_{\bfA\t}^2 &=  \bfwt \bfA\t \bfw = \|\nb_{\Ga_h\t} w_h\|_{L^2{(\Ga_h\t)}}^2, \quad \text{ and } \\
		\|\bfw\|_{\bfK\t}^2 &= \|\bfw\|_{\bfM\t}^2 + \|\bfw\|_{\bfA\t}^2 = \|w_h\|_{H^1{(\Ga_h\t)}}^2,
	\end{aligned}
\end{equation}
for any vector $\bfw \in \R^N$ corresponding to the finite element function $w_h \in S_h\t$.

%The following result, proved in \cite[Lemma~4.1]{DziukLubichMansour_rksurf}, compares quantities on surfaces at different times, We refer to  for basic estimates, that are used for the proof of the lemma thereafter.
%%
%\begin{lemma}
%	\label{lemma:MatrixEstimate}
%	There are constants $\alpha$ and $\beta$, that are independent of $h$ and the final time $T$ such that
%	%
%	\begin{equation*}
%	\begin{aligned}
%	\bfwt \big(\bfM(s) - \bfM\t\big) \bfz &\leq (e^{\alpha (s - t)} - 1) \,\|\bfw\|_{\bfM\t}\,\|\bfz\|_{\bfM\t}, \\
%	\bfwt \big( \bfA(s) - \bfA\t\big) \bfz &\leq (e^{\beta (s - t)} - 1) \,\|\bfw\|_{\bfA\t}\,\|\bfz\|_{\bfA\t},
%	\end{aligned}
%	%\label{eq: MatrixEstimate}
%	\end{equation*}
%	for all vectors $\bfw, \bfz \in \R^N$ and $0 \leq t \leq s \leq T$.
%\end{lemma}
%We will often use this lemma when $s$ is close to $t$, therefore we have $e^{c (s - t)} - 1 \leq 2 c (s-t)$.
%, and obtain that
%\begin{equation}
%\label{eq:matrix difference}
%	\text{the norms $\|\cdot\|_{\bfM\t}$ and $\|\cdot\|_{\bfA\t}$ are $h$ uniformly equivalent for $0 \leq t \leq T$.}
%\end{equation}

From \cite[Lemma~4.6]{KLLP2017} we recall the following estimates for the time derivatives of the mass and stiffness matrix, and, additionally, we prove that they also hold for the second order time derivatives. 
\begin{lemma}
\label{lemma:MatrixDerivativeEstimate}
	For all vectors $\bfw, \bfz \in \R^N$ we have
	\begin{subequations}
		\label{eq: MatrixDerivativeEstimate}
		\begin{align}
		\label{eq: MatrixDerivativeEstimate1}
		\bfwt \bfMd\t \bfz &\leq c \,\|\bfw\|_{\bfM\t} \,\|\bfz\|_{\bfM\t}, \\
		\label{eq: MatrixDerivativeEstimate2}
		\bfwt \bfAd\t \bfz &\leq c \,\|\bfw\|_{\bfA\t} \,\|\bfz\|_{\bfA\t},\\
		\label{eq: MatrixDerivativeEstimate3}
		\bfwt \bfMdd\t \bfz &\leq c \,\|\bfw\|_{\bfM\t} \,\|\bfz\|_{\bfM\t}, \\
		\label{eq: MatrixDerivativeEstimate4}
		\bfwt \bfAdd\t \bfz &\leq c \,\|\bfw\|_{\bfA\t} \,\|\bfz\|_{\bfA\t},
		\end{align}
	\end{subequations}
	where the constant $c > 0$ is independent of $h$, but depends on the surface velocity $v$.
\end{lemma}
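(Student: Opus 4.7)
\bigskip

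\noindent\textbf{Proof proposal.} The first two estimates \eqref{eq: MatrixDerivativeEstimate1}--\eqref{eq: MatrixDerivativeEstimate2} are just the statement of Lemma~4.6 in \cite{KLLP2017}, so the task is to establish the second-order analogues \eqref{eq: MatrixDerivativeEstimate3}--\eqref{eq: MatrixDerivativeEstimate4}. The plan is to translate each matrix quantity into a bilinear-form identity on $\Ga_h\t$, differentiate in time using the transport formula, and bound the resulting integrands pointwise by $L^\infty$ norms of $V_h$ and $\mat_h V_h$.

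For \eqref{eq: MatrixDerivativeEstimate3}, fix arbitrary $\bfw, \bfz \in \R^N$ with time-\emph{independent} entries, and let $w_h, z_h \in S_h\t$ be the associated finite element functions (their nodal values do not move, only the basis functions do). Since the basis functions satisfy the transport property \eqref{eq:transport property}, $\mat_h w_h = \mat_h z_h = 0$, and hence $\bfwt \bfM\t \bfz = m_h(t;w_h,z_h)$. The transport formula \eqref{eq:transport formula - m} then gives
\[
	\bfwt \bfMd\t \bfz = \diff m_h(w_h, z_h) = r_h(t; V_h; w_h, z_h) = \int_{\Ga_h\t} w_h\, z_h\, (\nb_{\Ga_h\t} \cdot V_h).
\]
Differentiating once more, and applying the transport formula to the integral on the right, I obtain
\[
	\bfwt \bfMdd\t \bfz = \int_{\Ga_h\t} w_h\, z_h\, \mat_h(\nb_{\Ga_h\t} \cdot V_h) + \int_{\Ga_h\t} w_h\, z_h\, (\nb_{\Ga_h\t} \cdot V_h)^2,
\]
where again $\mat_h w_h = \mat_h z_h = 0$ eliminates the other Leibniz terms. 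Both integrands are bounded by $c\, |w_h z_h|$ with a constant depending on $\|V_h\|_{W^{1,\infty}}$ and $\|\mat_h V_h\|_{W^{1,\infty}}$, which are controlled by the corresponding norms of $v$ and $\mat v$. A Cauchy--Schwarz estimate then yields \eqref{eq: MatrixDerivativeEstimate3}.

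For \eqref{eq: MatrixDerivativeEstimate4} the argument is the same in spirit, but uses the transport identity for $a_h$ from \cite{DziukElliott_L2,DziukElliott_acta}, which reads, for finite element functions with $\mat_h w_h = \mat_h z_h = 0$,
\[
	\diff a_h(w_h, z_h) = \int_{\Ga_h\t} \nb_{\Ga_h\t} w_h \cdot \Btensor_h(V_h)\, \nb_{\Ga_h\t} z_h,
\]
where $\Btensor_h(V_h)$ is a matrix-valued expression built from $\nb_{\Ga_h\t} V_h$ and the projector onto $T\Ga_h\t$, hence $\|\Btensor_h(V_h)\|_{L^\infty} \lesssim \|V_h\|_{W^{1,\infty}}$. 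Differentiating this identity once more produces terms in which $\Btensor_h(V_h)$ is either differentiated in time (giving a bounded matrix $\mat_h \Btensor_h(V_h)$ controlled by $\|V_h\|_{W^{1,\infty}}$ and $\|\mat_h V_h\|_{W^{1,\infty}}$), or further acted upon by the transport formula (producing a quadratic geometric factor, but again with only first derivatives of $V_h$). In every case, the integrand is a bilinear form in $\nb_{\Ga_h\t} w_h$ and $\nb_{\Ga_h\t} z_h$ with an $L^\infty$-bounded coefficient, so Cauchy--Schwarz gives \eqref{eq: MatrixDerivativeEstimate4}.

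The only real obstacle is bookkeeping the geometric tensor $\Btensor_h(V_h)$ and its material derivative; the structural point is that no second-order spatial derivatives of $V_h$ appear, so the regularity assumption $v, \mat v \in W^{1,\infty}$ built into \eqref{eq:regularity conditions} is already sufficient. The argument is independent of the mesh size $h$ because all bounds are pointwise on $\Ga_h\t$ and the nodal vectors $\bfw, \bfz$ enter only through the generated finite element functions.
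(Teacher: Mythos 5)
Your proposal is correct and follows essentially the same route as the paper: fixing time-independent nodal vectors so that the transport property gives $\mat_h w_h = \mat_h z_h = 0$, applying the discrete Leibniz/transport formula twice to obtain exactly the two integrals $\int_{\Ga_h\t} w_h z_h\, \mat_h(\nbgh\cdot V_h)$ and $\int_{\Ga_h\t} w_h z_h (\nbgh\cdot V_h)^2$, bounding the coefficients in $L^\infty$ via the interchange formulas and the interpolation bounds on $V_h$, and treating the stiffness matrix analogously with the deformation-tensor form of the transport formula for $a_h$. No substantive differences.
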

\begin{proof}
	The first two estimates were shown in Lemma~4.6 of \cite{KLLP2017}.
	
	We prove the estimate \eqref{eq: MatrixDerivativeEstimate3} for the second derivative of the mass matrix. For fixed vectors $\bfw, \bfz \in \R^N$ corresponding to discrete functions $w_h(\cdot,t), z_h(\cdot,t) \in S_h\t$ (for $0 \leq t \leq T$), we have $\mat_h w_h(\cdot,t) = \mat_h z_h(\cdot,t) = 0$ by the transport property \eqref{eq:transport property}, see \eqref{eq:discrete mat of an ESFEM func}. Using the discrete version of the Leibniz formula \cite[Lemma~2.2]{DziukElliott_ESFEM} or \cite[Lemma~4.2]{DziukElliott_L2} twice, we obtain
	\begin{align*}
		&\ \bfwt \bfMdd\t \bfz 
		= \frac{\d^2}{\d t^2} \int_{\Ga_h\t}{ w_h\, z_h} 
%		\\ =&\ 
		= \diff \int_{\Ga_h\t}{ w_h\, z_h (\nbgh \cdot V_h)} \\
		=&\ \int_{\Ga_h\t}{ w_h \,z_h \,\mat_h (\nbgh \cdot V_h)} + \int_{\Ga_h\t}{w_h \,z_h  \,(\nbgh \cdot V_h)^2} .
	\end{align*}	
	We remind here that the discrete spatial differential operators, and hence the integrals, are understood in an element-wise sense. 
	
	To estimate the first integral we recall how to interchange surface differential operators with the material derivative \cite[Lemma~2.6]{DziukKronerMuller}. For discrete differential operators they read:
	\begin{equation}
	\label{eq:interchange formulas}
		\begin{aligned}
			\mat_h(\nbgh w_h) = &\ \nbgh (\mat_h w_h)  - (I - \nu_h \nu_h^T) \nbgh V_h \cdot \nbgh w_h , \\
			\mat_h(\nbgh \cdot w_h) = &\ \nbgh \cdot \mat_h w_h  - (I - \nu_h \nu_h^T) \nbgh V_h : \nbgh w_h ,
		\end{aligned}
	\end{equation}
	understood element-wise, for $w_h:\Ga_h\t \to \R$ and $w_h:\Ga_h\t \to \R^3$, respectively. 
	Then, the second formula from \eqref{eq:interchange formulas} is used to estimate the first integral, together with the bounds on the discrete velocity $V_h$. 
	The boundedness of $V_h$ is implied by the sufficient regularity of the velocity $v$, and recalling that $V_h$ is the interpolation of $v$, cf.~\eqref{eq:discrete velocity}, see Lemma~\ref{lemma:VhBhBound} or \cite[Lemma~3.1.6]{Beschle_thesis}. We altogether obtain
	\begin{align*}
		\int_{\Ga_h\t}{ w_h \,z_h \,\mat_h (\nbgh \cdot V_h)} \leq &\ \|w_h\|_{L^2(\Ga_h\t} \,\|z_h\|_{L^2(\Ga_h\t)} \,\|\mat_h (\nbgh \cdot V_h)\|_{L^{\infty}(\Ga_h\t)} \\ 
		\leq &\ c \,\|\bfw\|_{\bfM\t} \,\|\bfz\|_{\bfM\t}.
	\end{align*}
	The second integral is directly bounded by
	\begin{align*}
	\int_{\Ga_h\t}{w_h \,z_h  \,(\nbgh \cdot V_h)^2} \leq &\ \|w_h\|_{L^2(\Ga_h\t} \,\|z_h\|_{L^2(\Ga_h\t)} \,\|\nbgh \cdot V_h\|_{L^{\infty}(\Ga_h\t)}^2 \\
	 \leq &\ c \,\|\bfw\|_{\bfM\t} \,\|\bfz\|_{\bfM\t} .
	\end{align*}
	
	The estimate for the stiffness matrix is shown by analogous arguments, now using the interchange formula \cite[Equation~$(7.27)$]{MCF}, and the analogous version of \eqref{eq:interchange formulas} for the first order differential operator appearing in the transport formula for the stiffness matrix \cite[Lemma~4.2,~$(4.18)$]{DziukElliott_L2}.
	
\qed\end{proof}

\subsection{Error equations and defects}
\label{section:error equations}

Before turning to the stability analysis, 
let us define a Ritz map of the exact solution onto the evolving surface finite element space, from \cite{LubichMansour_wave,highorderESFEM} we recall the definition of a time-dependent Ritz map on evolving surfaces: $R_h : H^1(\Gat) \to S_h^\ell\t$, (here we do not include the velocity term of \cite{LubichMansour_wave}). 

Let $u(\cdot,t) \in H^1(\Ga\t)$ for $0 \leq t \leq T$ be arbitrary. Then, the Ritz map is defined through $\widetilde R_h \t u \in S_h\t$ which satisfies, for all $\vphi_h \in S_h \t$,
\begin{equation}
\label{eq:definition Ritz map}
	\begin{aligned}
		a_h^*( \widetilde R_h\t u, \vphi_h) = a^*( u, \vphi_h^\ell) . 
%		+ m( \zeta , (v - v_h) \cdot \nbg \vphi_h^\ell ) .
	\end{aligned}
\end{equation}
The Ritz map is then defined as the lift of $\widetilde{R}_h\t$, i.e.~$R_h\t u  = (\widetilde R_h\t u)^\ell \in S_h^\ell\t$. We will often suppress the omnipresent time-dependency of the Ritz map. 
%In the mass term above, $v_h$ denotes the surface velocity of the lifted material points. It is defined by the velocity of the edges of the curved simplices in the lifted triangulation $\Ga_h^\ell\t = \Ga\t$, using the discrete surface velocity $V_h$. For more details we refer to \cite[Definition~4.3]{DziukElliott_L2}, or to \eqref{eq:definition v_h}.
%
%The motion of the edges of the simplices in the lifted triangulation defines a surface velocity for the smooth surface $\Ga \t$. We denote it by $v_h$ and through it we obtain a further discrete material derivative for functions living on the lifted evolving surface finite element space $S_h^\ell \t$. It is defined as
%\begin{equation*}
%\mat_h \vphi_h := \partial_t \vphi_h + v_h \cdot \nb \vphi_h,
%\end{equation*}
%it is the lift of the discrete material derivative such that for all $\vphi_h \in S_h \t$ with lift $\vphi_h^\ell \in S_h^\ell \t$ we have
%\begin{equation*}
%	\mat_h \vphi_h^\ell = (\mat_h \vphi_h)^\ell
%\end{equation*}
%and hence it has the transport property of basis functions \eqref{eq:transport property} as well.
%
In \cite{LubichMansour_wave} it was shown that the above Ritz map is well-defined, %high-order 
error estimates for the high-order evolving surface FEM were shown in \cite{highorderESFEM}, and are recalled in Lemma~\ref{lemma:Ritz map error}. We note, that the Ritz map used here differs from the one used by Elliott and Ranner in \cite{ElliottRanner_CH}, and the references therein, as it involves the bilinear form $a^*$ instead of $a$ together with the average condition. 

%\blueon Since we will use Ritz maps with different $\zeta$ functions, we will use the notation $R_h^\zeta$ to clearly  denote our particular choice. In case of $\zeta = 0$ we will simply write $R_h := R_h^0$.

Let us consider now the (unlifted) Ritz map of the exact solutions $u$ and $w$ of \eqref{eq:CH system}, which are denoted by 
\begin{equation*}
u_h^*(\cdot,t) = \widetilde R_h\t u(\cdot,t) \in S_h\t \andquad w_h^*(\cdot,t) = \widetilde R_h\t w(\cdot,t) \in S_h\t ,
\end{equation*} \blueoff 
whose nodal values are collected into the vectors 
\begin{equation*}
\us\t \in \R^N \andquad \ws\t \in \R^N.
\end{equation*}
The nodal vectors of the Ritz maps of the exact solutions satisfy the system \eqref{eq:matrix-vector form - pre} only up to some defects $\du\t$ and $\dw\t$ in $\R^N$, corresponding to the finite element functions $d_h^u(\cdot,t)$ and $d_h^w(\cdot,t)$ in $S_h\t$:
\begin{subequations}
	\label{eq:defect definition}
	\begin{align}
	\bfM\t \dotus\t + \bfA\t \ws\t = &\ \bff(\us\t) - \bfMd\t \us\t + \bfM\t \du\t , \\
	\bfM\t \ws\t - \bfA\t \us\t = &\ \bfg(\us\t) + \bfM\t \dw\t .
	\end{align}
\end{subequations}

The errors between the nodal values of the semi-discrete solutions and of the Ritz maps of the exact solutions are denoted by $\eu\t = \bfu\t - \us\t$ and $\ew\t = \bfw\t - \ws\t$ in $\R^N$. 
By subtracting \eqref{eq:defect definition} from \eqref{eq:matrix-vector form} we obtain that the errors $ \eu$ and $\ew$ (corresponding to the functions $e_{u_h}$ and $e_{w_h} \in S_h\t$) satisfy the following error equations:
\begin{subequations}
	\label{eq:error equations}
	\begin{align}
	\label{eq:error equations - u}
	\bfM\t \bfeud\t + \bfA\t \ew\t = &\ \Big( \bff(\bfu\t) - \bff(\us\t) \Big) \\
	&-\bfMd\t \eu\t - \bfM\t \du\t ,\nonumber \\
	\label{eq:error equations - w}
	\bfM\t \ew\t - \bfA\t \eu\t = &\ \Big( \bfg(\bfu\t) - \bfg(\us\t) \Big) \nonumber \\
	&\ - \bfM\t \dw\t  + \bfvartheta ,
	\end{align}
\end{subequations}
with zero initial values $\eu(0) = 0$ and $\ew(0) = 0$. 
Both initial values indeed vanish by construction: for $\eu(0)$ recall that we choose $\bfu(0)$ to be the nodal values of the Ritz map of $u^0$ and $\us(t)$ contains the nodal values of the Ritz map of $u$ for all $t$, while for $\ew(0)$ we have $\bfw(0) = \ws(0)$ by the construction \eqref{eq:initial data for w}.  

Since the initial values also satisfy \eqref{eq:error equations - w} at $t = 0$, we obtain the useful expression
\begin{equation}
\label{eq:vartheta equals defect d_w}
	\bfvartheta = \bfM(0) \dw(0) .
\end{equation}

%For a defect $\bfd \in \R^N$, corresponding to $d_h \in S_h\t$, we use the dual norm, cf.~\cite[proof of Theorem~5.1]{LubichMansourVenkataraman_bdsurf}, [cb: the norm is only mentioned in the citation as a side note, maybe there is a better source] defined by
%%
%\begin{equation}
%\label{eq:dualnorm1}
%\begin{aligned}
%	\|\bfd\|_{*,t}^2 := \|d_h \|_{{H_h\inv}(\Ga_h\t)}^2 := \sup_{0 \neq \vphi_h\t \in S_h\t} \frac{\int_{\Ga_h\t}{ d_h \, \vphi_h\t}}{\|\vphi_h\t\|_{H^1(\Ga_h\t)}} 
%	\\
%	&= \sup_{0 \neq \bfz \in \R^N} \frac{\bfdut \bfM\t \bfz }{\bfz \big(\bfM\t + \bfA\t\big) \bfz^{\frac{1}{2}}} \\
%	&= \sup_{0 \neq \bfw \in \R^N} \frac{\bfdut \bfM\t \big(\bfM\t + \bfA\t\big)^{-\frac{1}{2}} \bfw}{\big(\bfw , \bfw\big)^{\frac{1}{2}}} \\
%	&= \|\big(\bfM\t + \bfA\t\big)^{-\frac{1}{2}} \bfM\t \bfdu\|_2 \\
%	&=  \bfdut \bfM\t \big(\bfM\t + \bfA\t\big)\inv \bfM\t \bfdu^{\frac{1}{2}}.
%\end{aligned}
%\end{equation}
%Which, moreover, satisfies
%%
%\begin{equation*}
%	\|\bfd\|_{*,t}^2 =  \bfd^T \bfM\t \bfK\t \bfM\t \bfd .
%\end{equation*}

\subsection{Stability bounds}
\label{section:stability proof}

\begin{proposition}
\label{proposition:stability}
	Suppose there exists a constant $c > 0$ independent of $h$ and $t$ such that the defects are bounded for a $\kappa \geq 2$ by
	\begin{equation}
	\label{eq:defect bounds - assumed}
		\begin{alignedat}{3}
			\|\bfdu\t\|_{\bfM\t} \leq &\ c h^{\kappa} ,  \qquad & 
			\|\bfdud\t\|_{\bfM\t} \leq &\ c h^{\kappa} , \\
			\|\bfdw\t\|_{\bfM\t} \leq &\ c h^{\kappa} , \qquad & 
			\|\bfdwd\t\|_{\bfM\t} \leq &\ c h^{\kappa} ,
		\end{alignedat}
		\qquad t \in [0,T] .
	\end{equation}
	Furthermore, suppose that for all $0\leq t \leq T$ the Ritz maps $u_h^* = \widetilde R_h u$ and $w_h^* = \widetilde R_h w$ satisfy the bounds $\|u_h^*(\cdot,t)\|_{W^{1,\infty}(\Ga_h\t)} \leq M$ and $\|w_h^*(\cdot,t)\|_{W^{1,\infty}(\Ga_h\t)} \leq M$.
	
	Then, there exists $h_0 > 0$ such that the following error bound holds for $h \leq h_0$ and $0 \leq t \leq T$:
	\begin{equation}
	\label{eq:stability bound}
	\begin{aligned}
	&\ \|\bfeu\t\|_{\bfK\t}^2 + \|\bfew\t\|_{\bfK\t}^2 + \int_0^t \!\! {\|\bfeud (s) \|_{\bfK\s}^2} \d s\\
	\leq &\  C \int_0^t \!\! \|\bfdu (s)\|_{\bfM(s)}^2 +  \|\bfdud (s)\|_{\bfM(s)}^2 + \|\bfdw (s)\|_{\bfM(s)}^2 + \|\bfdwd (s)\|_{\bfM(s)}^2 \d s \\
	&\ + C\,\|\bfdu\t\|_{\bfM\t}^2 + C t \|\dw(0)\|_{\bfM(0)}^2 .
	\end{aligned}
	\end{equation}
	The constant $C > 0$ is independent of $t$ and $h$, but depends exponentially on the final time $T$. 
%	We note, that for the more general case the bound also depends on $0 <\eps < 1$.
	%
\end{proposition}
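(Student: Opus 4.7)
The plan is to derive \eqref{eq:stability bound} from a pair of coupled differential identities for $\|\eu\|_{\bfA\t}^2$ and $\|\ew\|_{\bfA\t}^2$, each obtained by testing \eqref{eq:error equations - u} and \eqref{eq:error equations - w} with carefully chosen vectors so that the anti-symmetric coupling cancels a cross-term, and then to close via Gronwall's lemma. Since $f,g$ are only locally Lipschitz, a bootstrap step is needed: set $t^\star = \sup\{ t \in [0,T] : \|\eu(s)\|_{\bfK\s} \leq h,\ 0 \leq s \leq t\}$, which is positive by $\eu(0)=0$ and continuity. On $[0,t^\star]$ the inverse estimate, combined with the Ritz hypothesis $\|u_h^\ast\|_{W^{1,\infty}(\Gah\t)} \leq M$, yields $\|u_h\|_{W^{1,\infty}(\Gah\t)} \leq M+1$; the local Lipschitz assumptions and \eqref{eq:norm equivalence} then give
\[
  \|\bff(\bfu)-\bff(\us)\|_{\bfM\t} + \|\bfg(\bfu)-\bfg(\us)\|_{\bfM\t} \leq L\,\|\eu\|_{\bfK\t},
\]
together with analogous bounds on the time derivatives by $L(\|\eu\|_{\bfK\t}+\|\bfeud\|_{\bfK\t})$, using the local Lipschitz continuity of $\partial_i f,\partial_i g$. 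One must eventually verify that the derived error bound strictly improves this threshold so that $t^\star=T$.

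The first central identity comes from testing \eqref{eq:error equations - u} with $\ew^T$ and \eqref{eq:error equations - w} with $\bfeud^T$ and subtracting: the symmetric contribution $\ew^T\bfM\bfeud-\bfeud^T\bfM\ew$ cancels, and the identity $\bfeud^T\bfA\eu=\tfrac{1}{2}\tfrac{\d}{\d t}\|\eu\|_{\bfA\t}^2-\tfrac{1}{2}\eu^T\bfAd\eu$ (with $\bfAd$ controlled by Lemma \ref{lemma:MatrixDerivativeEstimate}) produces
\[
  \tfrac{1}{2}\tfrac{\d}{\d t}\|\eu\|_{\bfA\t}^2 + \|\ew\|_{\bfA\t}^2 = \mathrm{RHS}_1(t).
\]
The second identity requires first differentiating \eqref{eq:error equations - w} in time (the constant $\bfvartheta$ drops out), then testing \eqref{eq:error equations - u} with $\bfewd^T$ and the differentiated equation with $\bfeud^T$ and subtracting; the symmetric cross-term again cancels and $\bfewd^T\bfA\ew=\tfrac{1}{2}\tfrac{\d}{\d t}\|\ew\|_{\bfA\t}^2-\tfrac{1}{2}\ew^T\bfAd\ew$ produces
\[
  \tfrac{1}{2}\tfrac{\d}{\d t}\|\ew\|_{\bfA\t}^2 + \|\bfeud\|_{\bfA\t}^2 = \mathrm{RHS}_2(t).
\]
Awkward $\bfewd^T(\cdot)$ contributions in $\mathrm{RHS}_2$, such as $\int_0^t\bfewd^T\bfM\du\d s$, are handled by integration by parts in time on $[0,t]$; the resulting boundary term $\ew(t)^T\bfM\t\du\t$ is what generates the $C\|\du\t\|_{\bfM\t}^2$ contribution to \eqref{eq:stability bound}. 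Complementary tests of \eqref{eq:error equations - u} with $\eu^T$ and $\bfeud^T$, and of \eqref{eq:error equations - w} with $\ew^T$, lift the $\bfA$-norm bounds to full $\bfK$-norm control and provide the missing $\|\bfeud\|_{\bfM}^2$ piece of the integrand; the $\ew^T\bfvartheta$ term produced by the last of these, combined with \eqref{eq:vartheta equals defect d_w} and Cauchy--Schwarz, supplies the $Ct\|\dw(0)\|_{\bfM(0)}^2$ term.

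Integrating the two differential identities on $[0,t]$, using the vanishing initial data $\eu(0)=\ew(0)=0$, and absorbing the cross terms $\bfeud^T\bfA\ew$, $\eu^T\bfA\ew$ together with the $\|\bfeud\|_{\bfK}$ contributions arising from the Lipschitz estimate on $\tfrac{\d}{\d t}(\bfg(\bfu)-\bfg(\us))$ into the coercive left-hand quantities via Young's inequality with small weights, Gronwall's lemma closes the estimate into \eqref{eq:stability bound}. The main technical obstacle is precisely this absorption in case (a): the time derivative of the nonlinearity depends Lipschitz-continuously on $\bfeud$, and its contribution can be absorbed only because the second identity delivers $\|\bfeud\|_{\bfA}^2$ with coefficient one (and the $\bfeud^T$-test of \eqref{eq:error equations - u} delivers $\|\bfeud\|_{\bfM}^2$ analogously). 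Finally, \eqref{eq:stability bound} combined with the defect bounds \eqref{eq:defect bounds - assumed} (where $\kappa\geq 2$) and an inverse estimate give $\|\eu\|_{\bfK\t}\leq Ch^{\kappa}<h$ for all sufficiently small $h\leq h_0$, which strictly improves the bootstrap threshold and forces $t^\star=T$, completing the argument.
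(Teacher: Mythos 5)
Your proposal is correct and follows essentially the same route as the paper's proof: the same two-tier energy estimates exploiting the anti-symmetric structure (testing with the errors and with their time derivatives, and time-differentiating the second error equation so that $\|\bfeud\|_{\bfK(t)}^2$ appears on the left to control the critical term), the same integration by parts in time for the $\bfewd$-contributions producing the $\|\du(t)\|_{\bfM(t)}^2$ boundary term, the same weighted combination to absorb the $\bfeud$-dependence of the differentiated nonlinearity, and the same bootstrap closed by an inverse estimate. The only differences are bookkeeping: the paper cancels the cross term $\bfeud^T\bfA(t)\ew$ exactly by pairing the $\bfeud$-test of the first error equation with the $\ew$-test of the \emph{differentiated} second equation (rather than absorbing it by Young's inequality), and it runs the bootstrap on the $W^{1,\infty}(\Ga_h(t))$ norm of $e_{u_h}$ with threshold $h^{(\kappa-d/2)/2}$ rather than on $\|\eu\|_{\bfK(t)}$.
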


In Section~\ref{section:consistency}, Proposition~\ref{proposition:consistency}, we show that the defects are in fact bounded as $O(h^{k+1})$. 

\begin{proof}	
The proof is based on energy estimates, and its basic idea is very similar to that of \cite{Willmore}. 
Proving uniform-in-time $H^1$ norm error estimates is essential for handling the non-linear term, which is done by deriving a $W^{1,\infty}$ norm bound for the errors using an inverse estimate.

In order to achieve a uniform-in-time stability bound, two sets of energy estimates are required. These energy estimates strongly exploit the anti-symmetric structure of \eqref{eq:CH system}. (i) In the first one, an energy estimate is proved for $\eu$, but comes with a critical term involving $\doteu$. (ii) The second estimate uses the time derivative of \eqref{eq:error equations - w}, leads to a bound of this critical term and also to a uniform-in-time bound for $\ew$. The combination of these two energy estimates gives the above stability bound.
The structure and basic idea of the proof is sketched in Figure~\ref{fig:energy estimates}.
\begin{figure}[htbp]
	\begin{center}
		\includegraphics[width=\textwidth]{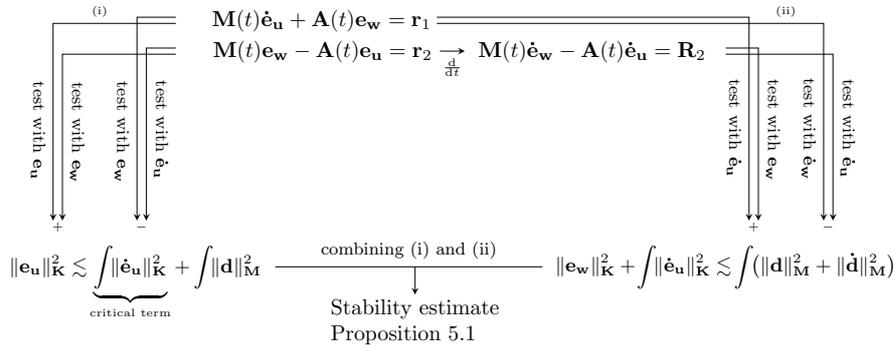}
		\caption{Sketch of the structure of the energy estimates for the stability proof. In the diagram $\bfr_1$ and $\bfr_2$ denote the right-hand sides of \eqref{eq:error equations - u} and \eqref{eq:error equations - w}. (Note that, after time differentiation, the term $\bfR_2$ not only contains the time derivative of $\bfr_2$, but other terms involving derivatives of matrices as well.)}
		\label{fig:energy estimates}
	\end{center}
\end{figure}
%[cb: norms on defects in figure have to be corrected, before $\bfA \t$ is $\eps$ and not $\eps\inv$. kb: Corrected!] 
In order to handle the non-linear terms we first prove the stability bound on a time interval where the $W^{1,\infty}$ norm of $e_h^u$ is small enough, and then show that this time interval can be enlarged up to $T$.

In the following $c$ and $C$ are generic constants that take different values on different occurrences. Whenever it is possible, without confusion, we omit the argument $t$ of time-dependent vectors but not of time-dependent matrices. By $\varrho_j> 0$ we will denote small numbers, used in Young's inequalities for different absorptions, and hence we will often incorporate $h$ independent multiplicative constants into those, yet unchosen, factors.

We start by stating that there exists a maximal time $0 < t^* \leq T$ such that, for all $t \leq t^*$,
\begin{equation}
\label{eq: tStarBound}
	\|e_{u_h}(\cdot,t)\|_{W^{1,\infty}(\Ga_h\t)} \leq h^\frac{\kappa - d/2}{2} ,
	\quadfora  0 \leq t \leq t^*.
\end{equation}
Since $e_{u_h}(\cdot,0) = 0$ and since $u_h$ and $u_h^*$,  respectively their spatial derivatives $\nb_{\Ga_h} u_h$ and $\nb_{\Ga_h} u_h^*$  are continuous in time, we directly infer that $t^* > 0$.

Thus, by the assumption that the Ritz maps of the exact solutions satisfy $\|u_h^*\t\|_{W^{1,\infty}(\Ga_h\t)}, \|w_h^*\t\|_{W^{1,\infty}(\Ga_h\t)} \leq M$, with a finite constant $M>0$, we obtain the following bound for the numerical solution:
\begin{equation}
\label{eq:UhBound2}
	\begin{aligned}
		\|u_h(\cdot,t)\|_{W^{1,\infty}(\Ga_h\t)} &= \|u_h^*(\cdot,t) - e_{u_h}(\cdot,t)\|_{W^{1,\infty}(\Ga_h\t)} \\
		&\leq \|u_h^*(\cdot,t)\|_{W^{1,\infty}(\Ga_h\t)} + \|e_{u_h}(\cdot,t)\|_{W^{1,\infty}(\Ga_h\t)} \leq 2M,
	\end{aligned}
\end{equation}
for all $0 \leq t \leq t^*$ and for $h \leq h_0$ sufficiently small, and similarly for $w_h$. Thus, for  $f \in C(\R \times \R^d)$
\begin{equation}
 \|f\big(u_h(\cdot,t), \nb_{\Ga_h \t} u_h(\cdot,t)\big)\|_{L^{\infty}(\Ga_h\t)} \leq C ,
\label{eq:fBound}
\end{equation}
for all $0 \leq t \leq t^*$ and  $h \leq h_0$ sufficiently small.
We first prove the stated stability bound for $0 \leq t \leq t^*$, and then show that indeed $t^*$ coincides with $T$. 

\textit{Energy estimate (i):} We take the first error equation \eqref{eq:error equations - u} and test it with $\bfeu$, while the second one \eqref{eq:error equations - w} is tested by $\bfew$, to obtain
\begin{equation*}
	\begin{aligned}
		\bfeut \bfM\t \bfeud + \bfeut \bfA\t \bfew = &\ \bfeut \big(\bff(\bfu\t) - \bff(\bfu^*\t)\big) \\
		&\ - \bfeut \bfMd\t  \bfeu - \bfeut \bfM\t  \bfdu, \\
		\bfewt \bfM\t \bfew - \bfewt \bfA\t \bfeu = &\ \bfewt \big(\bfg(\bfu\t) - \bfg(\bfu^*\t)\big) \\
		&\ - \bfewt  \bfM\t \bfdw + \bfewt \bfvartheta .
	\end{aligned} 
\end{equation*}
By adding the two equations, and by the symmetry of $\bfA$, we eliminate the mixed term $\bfeut \bfA\t \bfew$, and obtain
\begin{align*}
	\bfeut \bfM\t \bfeud +  \, \bfewt \bfM\t \bfew  = &\ - \bfeut \bfMd\t \bfeu   \\
	&\ + \bfeut \big(\bff(\bfu\t) - \bff(\bfu^*\t)\big) \\
	&\ + \,  \bfewt \big(\bfg(\bfu\t) - \bfg(\bfu^*\t) \big) \\
	&\ - \bfeut \bfM\t \bfdu  -  \bfewt \bfM\t \bfdw  + \bfewt \bfvartheta .
\end{align*}
Using the product rule and symmetry of $\bfM$ we rewrite the first term as
\begin{equation*}
\bfeut \bfM\t \bfeud  = \frac{1}{2} \,\diff \big( \bfeut \bfM\t \bfeu \big) - \frac{1}{2} \, \bfeut \bfMd\t \bfeu ,
\end{equation*}
which altogether yields
\begin{equation*}
	\begin{aligned}
		\frac{1}{2} \diff \|\bfeu\|_{\bfM\t}^2 + \|\bfew\|_{\bfM\t}^2 
		= &\ -\frac{1}{2} \bfeut \bfMd\t \bfeu  \\
		&\ + \bfeut \big(\bff(\bfu\t) - \bff(\bfu^*\t) \big) \\
		&\ + \bfewt \big(\bfg(\bfu\t) - \bfg(\bfu^*\t) \big) \\
		&\ -  \bfeut \bfM\t \bfdu  - \bfewt \bfM\t \bfdw  + \bfewt \bfvartheta .
	\end{aligned}
\end{equation*}
Similarly, we test \eqref{eq:error equations - u} by $\bfew$ and \eqref{eq:error equations - w} by $\bfeud$,
% to obtain
%%
%\begin{equation*}
%	\begin{aligned}
%		\bfewt \bfM\t \bfeud  +  \bfewt \bfA\t \bfew  = &\ - \bfewt \bfMd\t \bfeu - \bfewt \bfM\t \bfdu , \\
%		\bfeudt \bfM\t \bfew  - \eps \,  \bfeudt \bfA\t \bfeu   = &\ \eps\inv \,  \bfeudt \big( \bfW(\bfu\t) - \bfW(\bfu^*\t) \big) \\
%		&\ + \bfeudt \bfM\t \bfdw .
%	\end{aligned}
%\end{equation*}
%%
now a subtraction leads to cancelling the mixed term $\bfewt \bfM\t \bfeud$, and again by the product rule and the symmetry of $\bfA$, we obtain
\begin{align*}
	\frac{1}{2} \diff \|\bfeu\|_{\bfA\t}^2 + \|\bfew\|_{\bfA\t}^2 
	= &\ - \bfewt \bfMd\t \bfeu +  \frac{1}{2} \, \bfeut \bfAd\t \bfeu  \\
	&\ + \bfewt \big(\bff(\bfu\t) - \bff(\bfu^*\t) \big) \\
	&\ - \,  \bfeudt \big(\bfg(\bfu\t) - \bfg(\bfu^*\t)\big)  \\
	&\ -  \bfewt \bfM\t \bfdu  +  \bfeudt \bfM\t \bfdw  - \bfeudt \bfvartheta.
\end{align*}
Taking the linear combination of the above equalities yields
\begin{equation}
\label{eq:energy est - pre bounds}
	\begin{aligned}
		\frac{1}{2} \diff \|\bfeu\|_{\bfK\t}^2 + \|\bfew\|_{\bfK\t}^2 
		= &\ - \bfewt \bfMd\t \bfeu - \frac{1}{2} \bfeut \bfMd\t \bfeu + \frac{1}{2} \bfeut \bfAd\t \bfeu \\
		&\ + \bfeut \big(\bff(\bfu\t) - \bff(\bfu^*\t) \big) \\
		&\ + \bfewt \big(\bff(\bfu\t) - \bff(\bfu^*\t) \big) \\
		&\ + \bfewt \big(\bfg(\bfu\t) - \bfg(\bfu^*\t) \big) \\
		&\ -\bfeudt \big(\bfg(\bfu\t) - \bfg(\bfu^*\t)\big) \\
		&\ - \bfeut \bfM\t \bfdu  - \bfewt \bfM\t \bfdw \\
		&\ - \bfewt \bfM\t \bfdu  +  \bfeudt \bfM\t \bfdw \\
		&\ + \bfewt \bfvartheta - \bfeudt \bfvartheta .
	\end{aligned}
\end{equation}
The terms on the right-hand side are now estimated separately.

The terms involving time derivatives of matrices are estimated using Lemma~\ref{lemma:MatrixDerivativeEstimate}, by
\begin{equation}
\label{eq:energy est - i - diff terms}
\begin{aligned}
	&\ - \bfewt \bfMd\t \bfeu - \frac{1}{2} \bfeut \bfMd\t \bfeu + \frac{1}{2} \bfeut \bfAd\t \bfeu 
	\leq  \|\ew\|_{\bfM\t} \|\eu\|_{\bfM\t} + c \|\eu\|_{\bfK\t}^2 .
\end{aligned}
\end{equation}
 For the non-linear terms, using \eqref{eq:UhBound2} and the local-Lipschitz property of $f$, we obtain 
\begin{equation}
\label{eq:energy est - i - non-linear term 1}
	\begin{aligned}
		 &\bfeut \big(\bff(\bfu\t) - \bff(\bfu^*\t) \big) \\
		= &\ \int_{\Ga_h\t} \!\! e_{u_h}(\cdot,t) \big( f(u_h(\cdot,t), \nb_{\Ga_h \t} u_h(\cdot,t)) - f(u_h^*(\cdot,t), \nb_{\Ga_h \t} u_h^*(\cdot,t))\big)  \\
		\leq &\ L \, \|e_{u_h}(\cdot,t)\|_{L^2(\Ga_h\t)} \|u_h(\cdot,t) - u_h^*(\cdot,t)\|_{H^1(\Ga_h\t)}\\
		= &\ c \, \|\bfeu\|_{\bfM\t} \|\bfeu\|_{\bfK\t} , 
	\end{aligned}
\end{equation} 
% \begin{equation}
% \label{eq:energy est - i - non-linear term 1}
% 	\begin{aligned}
% 		 \bfewt \big(\bfW(\bfu\t) - \bfW(\bfu^*\t) \big) 
% 		= &\ \int_{\Ga_h\t} \!\! e_{w_h}(\cdot,t) \big(W'(u_h(\cdot,t)) - W'(u_h^*(\cdot,t))\big)  \\
% 		\leq &\ L \, \|e_{w_h}(\cdot,t)\|_{L^2(\Ga_h\t)} \|u_h(\cdot,t) - u_h^*(\cdot,t)\|_{L^2(\Ga_h\t)}\\
% 		= &\ c \, \|\bfew\|_{\bfM\t} \|\bfeu\|_{\bfM\t} , 
% 	\end{aligned}
% \end{equation}
where $L$ is the local Lipschitz constant of $f$, and we similarly obtain  
\begin{subequations}
\begin{align}
\label{eq:energy est - i - non-linear term 2}
\bfewt \big(\bff(\bfu\t) - \bff(\bfu^*\t)\big) \leq c \,\|\bfew\|_{\bfM\t} \|\bfeu\|_{\bfK\t},\\
\label{eq:energy est - i - non-linear term 3}
\bfewt \big(\bfg(\bfu\t) - \bfg(\bfu^*\t) \big)  \leq c \, \|\bfew\|_{\bfM\t} \|\bfeu\|_{\bfK\t},\\
\label{eq:energy est - i - non-linear term 4}
\bfeudt \big(\bfg(\bfu\t) - \bfg(\bfu^*\t)\big) \leq c \,\|\bfeud\|_{\bfM\t} \|\bfeu\|_{\bfK\t}.
\end{align}
\end{subequations}

The defect terms are estimated by the Cauchy--Schwarz inequality, as
\begin{equation}
\label{eq:energy est - i - defect terms}
\begin{aligned}
	&\ -\bfeut \bfM\t \bfdu  - \bfewt \bfM\t \bfdw 
	-  \bfewt \bfM\t \bfdu  +  \bfeudt \bfM\t \bfdw \\
	\leq &\ \|\eu\|_{\bfM\t} \|\du\|_{\bfM\t} + \|\ew\|_{\bfM\t} \|\dw\|_{\bfM\t} \\
	&\ + \|\ew\|_{\bfM\t} \|\du\|_{\bfM\t} + \|\bfeud\|_{\bfM\t} \|\dw\|_{\bfM\t} .
\end{aligned}
\end{equation}
The terms involving the correction term $\bfvartheta$ are bounded similarly as the defect terms. Using equality \eqref{eq:vartheta equals defect d_w} and the norm equivalence in time \cite[Lemma~4.1]{DziukLubichMansour_rksurf} (to change the time from $0$ to $t$), we obtain 
\begin{equation}
\label{eq:energy est - i - vartheta terms}
\begin{aligned}
	\bfewt \bfvartheta -  \bfeudt \bfvartheta 
	\leq &\ \|\ew\|_{\bfM(0)} \|\dw(0)\|_{\bfM(0)} + \|\bfeud\|_{\bfM(0)} \|\dw(0)\|_{\bfM(0)} \\
	\leq &\ c\inv \|\ew\|_{\bfM\t} \|\dw(0)\|_{\bfM(0)} + c\inv \|\bfeud\|_{\bfM\t} \|\dw(0)\|_{\bfM(0)} .
\end{aligned}
\end{equation}
Altogether, by the combination of the estimates \eqref{eq:energy est - i - diff terms}--\eqref{eq:energy est - i - defect terms} with \eqref{eq:energy est - pre bounds}, by multiple Young's inequalities (with $\varrho_0 > 0$ chosen later on) and by absorptions to the left-hand side, we obtain
\begin{equation}
\label{eq:energy estimate - i - pre int}
	\begin{aligned}
		\frac{1}{2} \diff \|\bfeu\|_{\bfK\t}^2 + \|\bfew\|_{\bfK\t}^2 
		\leq &\ \varrho_0 \frac{1}{2} \|\bfeud\|_{\bfK\t}^2 + c \|\eu\|_{\bfK\t}^2 \\
		&\ + c \|\bfdu\|_{\bfM\t}^2 + c \|\bfdw\|_{\bfM\t}^2 + c \|\dw(0)\|_{\bfM(0)}^2 .
	\end{aligned}
\end{equation}
Integrating from $0$ to $t \in (0,t^*]$, and using that $\bfeu(0) = 0$, we obtain the first energy estimate:
\begin{equation}
\label{eq:energy estimate - i}
	\begin{aligned}
		\|\bfeu\t\|_{\bfK\t}^2 +  \int_0^t \!\! {\|\ew\s\|_{\bfK\s}^2} \d s 
		\leq &\ \varrho_0 \int_0^t \!\! {\|\bfeud\s\|_{\bfK\s}^2} \d s 
%		\\ &\ 
		+ c \int_0^t \!\! {\|\bfeu\s\|_{\bfK\s}^2} \d s \\
		&\ + c \int_0^t \!\! \big( \|\bfdu\s\|_{\bfM(s)}^2 + \|\bfdw\s\|_{\bfM(s)}^2 \big) \d s \\
		&\ + c t \|\dw(0)\|_{\bfM(0)}^2 .
	\end{aligned}
\end{equation}
Note that if we do not use the Ritz map for the initial value for $u_h$, the error $\|\eu(0)\|_{\bfK(0)}^2$ would not vanish on the right-hand side. This $H^1$ norm error however cannot be bounded with the sufficient order. 
Furthermore, note the critical term, with $\|\bfeud\s\|_{\bfK\s}$, on the right-hand side, which cannot be bounded or absorbed in any direct way.

\textit{Energy estimates (ii)} To control the critical term on the right-hand side of \eqref{eq:energy estimate - i} we will now derive an energy estimate, which includes this term on the left-hand side. To this end, we first differentiate the second equation of \eqref{eq:error equations} with respect to time (note that the time-independent $\bfvartheta$ vanishes), and, after rearranging the terms, we obtain the following system:
\begin{subequations}
\label{eq:diff error equations}
	\begin{align}
		\label{eq:diff error equations - u}
		\bfM\t\bfeud + \bfA\t\bfew = &\ \bff(\bfu\t) - \bff(\bfu^*\t) \\
		&\ -\bfMd\t\bfeu - \bfM\t\bfdu, \nonumber\\
		\label{eq:diff error equations - w}
		\bfM\t\bfewd - \bfA\t\bfeud = &\ - \bfMd\t\bfew + \bfAd\t\bfeu \nonumber \\
		&\ + \diff\big(\bfg(\bfu\t) - \bfg(\bfu^*\t)\big) \\
		&\ - \bfMd\t\bfdw - \bfM\t\bfdwd. \nonumber
	\end{align}
\end{subequations}
Testing the error equation system \eqref{eq:diff error equations} twice, similarly as before in Part (i), would not lead to a feasible energy estimate, but to a bound which includes a new critical term $\bfeud$. The issue is avoided by separating the two estimates for the error equations, (ii.a) and (ii.b), and then taking their \emph{weighted} combination in (ii.c),
(ii.a). We test \eqref{eq:diff error equations - u} by $\bfeud$ and \eqref{eq:diff error equations - w} by $\bfew$,
% and reorder to obtain
%%
%\begin{equation}
%\begin{aligned}
%\label{eq: differror}
%\bfeudt  \bfM\t \bfeud  +  \bfeudt \bfA\t \bfew  = &-  \bfeudt \bfMd\t \bfeu - \bfeudt \bfM\t \bfdu ,\\
%\eps\inv \bfewt \bfM\t \bfewd  - \bfewt \bfA\t \bfeud  =&\ \bfewt \bfAd\t \bfeu - \eps\inv \bfewt \bfMd\t \bfew   \\
%&+\eps^{-2}  \bfewt  \diff\big(\bfW(\bfu\t) - \bfW(\bfu^*\t)\big) \\
%& - \eps\inv \bfewt \bfMd\t \bfdw  \\
%& - \eps\inv \bfewt \bfM\t \bfdwd\t.
%\end{aligned}
%\end{equation}
%%
adding the two equations together to cancel the mixed term $\bfeudt \bfA\t \bfew$, and using the product rule as before, we obtain
\begin{equation}
\label{eq:energy est - ii.a - pre est}
    \begin{aligned}
        \|\bfeud\|_{\bfM\t}^2 + \frac{1}{2} \diff \|\bfew\|_{\bfM\t}^2 
        = &\ - \bfeudt \bfMd\t \bfeu - \frac{1}{2} \bfewt \bfMd\t \bfew + \bfewt \bfAd\t \bfeu \\
        &\ + \bfeudt  \big(\bff(\bfu\t) - \bff(\bfu^*\t)\big) \\
        &\ +\bfewt  \diff\big(\bfg(\bfu\t) - \bfg(\bfu^*\t)\big) \\
        &\ - \bfeudt \bfM\t \bfdu - \bfewt \bfMd\t \bfdw - \bfewt \bfM\t \bfdwd\t.
    \end{aligned}
\end{equation}
The right-hand side terms are again estimated separately. The ones in the first line are bounded, using Lemma~\ref{lemma:MatrixDerivativeEstimate}, by
\begin{equation}
\label{eq:energy est - ii.a - diff terms}
	\begin{aligned}
		&\ - \bfeudt \bfMd\t \bfeu - \frac{1}{2} \bfewt \bfMd\t \bfew + \bfewt \bfAd\t \bfeu \\
		\leq &\ c \|\bfeud\|_{\bfM\t} \|\eu\|_{\bfM\t} + c \, \|\ew\|_{\bfM\t}^2 + c \|\ew\|_{\bfA\t} \|\eu\|_{\bfA\t} .
	\end{aligned}
\end{equation}
The first non-linear term is estimated as in \eqref{eq:energy est - i - non-linear term 1} -- \eqref{eq:energy est - i - non-linear term 4} whereas the second non-linear term occurs differentiated with respect to time. Therefore, with the help of the transport formula \eqref{eq:transport formula - m} we compute, omitting the omnipresent argument $t$,  
\begin{equation*}
\begin{aligned}
 &\ \ew^T \diff \big(\bfg(\bfu) -\bfg(\bfu^*)\big)\\
	= &\   \int_{\Ga_h\t} \!\!\!\! {\partial_1 g\big(u_h, \nb_{\Ga_h \t} u_h\big) \, \mat_h u_h  \, e_{w_h}} - \int_{\Ga_h\t} \!\!\!\! {\partial_1 g\big(u_h^*, \nb_{\Ga_h \t} u_h^*\big) \, \mat_h u_h^* \, e_{w_h}} \\
    &\ + \int_{\Ga_h\t} \!\!\!\!\!\! {\partial_2 g\big(u_h, \nb_{\Ga_h \t} u_h\big) \, \mat_h (\nb_{\Ga_h \t} u_h  ) \, e_{w_h}} 
    \\
    &\ - \int_{\Ga_h\t} \!\!\!\!\!\! {\partial_2 g\big(u_h^*, \nb_{\Ga_h \t} u_h^*\big) \, \mat_h (\nb_{\Ga_h \t} u_h^*) \, e_{w_h}} \\
	&\ + \int_{\Ga_h\t} \!\!\!\!\!\!\! {(\nbgh \cdot V_h) \, g\big(u_h, \nb_{\Ga_h \t} u_h\big) \, e_{w_h}} 
	- \int_{\Ga_h\t} \!\!\!\!\!\!\! {(\nbgh \cdot V_h) \, g\big(u_h^*, \nb_{\Ga_h \t} u_h^*\big) \, e_{w_h}} \\
	=: & \ I + II + III.
	\end{aligned}
\end{equation*}
Let us first estimate the most challenging second term. Inserting \linebreak $\mp \int_{\Ga_h\t}{\partial_2 g\big(u_h, \nb_{\Ga_h \t} u_h\big) \,\mat_h\, \nb_{\Ga_h \t} u_h^* \, e_{w_h}}$ we bound $II$ by
\begin{equation*}
\begin{aligned}
II = & \ \int_{\Ga_h\t}{\partial_2 g\big(u_h, \nb_{\Ga_h \t} u_h\big)} 
\, \Big( \mat_h (\nb_{\Ga_h \t} u_h)   - \mat_h (\nb_{\Ga_h \t} u_h^*) \Big) \, e_{w_h} \\
&\ - \int_{\Ga_h\t}{\Big( \partial_2 g\big(u_h^*, \nb_{\Ga_h \t} u_h^*\big) - \partial_2 g\big(u_h, \nb_{\Ga_h \t} u_h\big)\Big)} \, \mat_h (\nb_{\Ga_h \t} u_h^*) \, e_{w_h} \\
\leq & \ c \, \|\bfew\|_{\bfM\t} \big( \|\bfeu\|_{\bfK\t} + \|\bfeud\|_{\bfK\t} \big),
\end{aligned}
\end{equation*}
using the first interchange formula from \eqref{eq:interchange formulas}, the local Lipschitz property of $\partial_2 g$ together with \eqref{eq:fBound}, and the bounds on $V_h$ obtained by interpolation error estimates (for details, see \cite[Lemma~3.1.6]{Beschle_thesis}).

The second term is now estimated analogously, by adding and subtracting, but not requiring the interchange steps, these yield
%Inserting $\mp \int_{\Ga_h\t}{\partial_1 g\big(u_h, \nb_{\Ga_h \t} u_h\big) \,\mat_h\, u_h^* \, e_{w_h}}$ we bound $I$ by
\begin{equation*}
 \begin{aligned}
   I 
%   = & \ \int_{\Ga_h\t}{\partial_1 g\big(u_h, \nb_{\Ga_h \t} u_h)\big) \, \big[\mat_h u_h  - \mat_h u_h^*\big] \, e_{w_h}} \\
%	&\ - \int_{\Ga_h\t}{\Big( \partial_1 g\big(u_h^*, \nb_{\Ga_h \t} u_h^*\big)  - \partial_1 f\big(u_h, \nb_{\Ga_h \t} u_h\big) \Big) } \, \mat_h u_h^* \, e_{w_h} \\
	 \leq  & \ c \, \|\bfew\|_{\bfM\t} \big( \|\bfeu\|_{\bfM\t} + \|\bfeud\|_{\bfM\t} \big),
 \end{aligned}
\end{equation*}
using the local Lipschitz property of $\partial_1 g$ together with \eqref{eq:fBound}.  
Furthermore, for the third term we directly obtain
\begin{equation*}
 \begin{aligned}
    III = & \ \int_{\Ga_h\t} \!\!\!\!\!\!\! {(\nbgh \cdot V_h) \, g\big(u_h, \nb_{\Ga_h \t} u_h\big) \, e_{w_h}} 
    - \int_{\Ga_h\t} \!\!\!\!\!\!\! {(\nbgh \cdot V_h) \, g\big(u_h^*, \nb_{\Ga_h \t} u_h^*\big) \, e_{w_h}} \\
	\leq & \ c \, \|\bfew\|_{\bfM\t} \|\bfeu\|_{\bfK\t},
 \end{aligned}
\end{equation*}
using the local Lipschitz property of $g$ together with \eqref{eq:fBound}.
Altogether, the estimates for $I$--$III$ yield
\begin{equation}
\label{eq:energy est - ii.a - non-linear term}
	\ew^T \diff \big(\bfg(\bfu) -\bfg(\bfu^*)\big) \leq \ c \, \|\bfew\|_{\bfM\t} \big( \|\bfeu\|_{\bfK\t} + \|\bfeud\|_{\bfK\t} \big).
\end{equation}
 
The defect terms are bounded, similarly as before, by
\begin{equation}
\label{eq:energy est - ii.a - defect terms}
	\begin{aligned}
		&\ - \bfeudt \bfM\t \bfdu - \bfewt \bfMd\t \bfdw - \bfewt \bfM\t \bfdwd\t \\
		\leq &\ c \|\bfeud\|_{\bfM\t} \|\du\|_{\bfM\t} + c \,\|\ew\|_{\bfM\t} \|\dw\|_{\bfM\t} + c \, \|\ew\|_{\bfM\t} \|\bfdwd\|_{\bfM\t} .
	\end{aligned}
\end{equation}
Altogether, by plugging in \eqref{eq:energy est - ii.a - diff terms}--\eqref{eq:energy est - ii.a - defect terms} into \eqref{eq:energy est - ii.a - pre est}, then using Young's inequalities (with a small number $\varrho_1>0$), we obtain the first energy estimate of this part:
% \begin{equation}
% \label{eq:energy est - ii.a}
% \begin{aligned}
% 	\frac{1}{2} \|\bfeud\|_{\bfM\t}^2 + \frac{1}{2} \diff \|\bfew\|_{\bfM\t}^2 
% 	\leq &\ c \|\bfeu\|_{\bfK\t}^2	+ c \, \|\bfew\|_{\bfK\t}^2 
% 	\\
% %	+ \varrho \|\bfeud\|_{\bfA\t}^2 \\
% 	&\ + c \big( \|\bfdu\|_{\bfM\t}^2 + \|\bfdw\|_{\bfM\t}^2 + \|\bfdwd\|_{\bfM\t}^2 \big) .
% 	\end{aligned}
% \end{equation}
\begin{equation}
\label{eq:energy est - ii.a}
\begin{aligned}
	\|\bfeud\|_{\bfM\t}^2 + \frac{1}{2} \diff \|\bfew\|_{\bfM\t}^2 
	\leq &\ c \|\bfeu\|_{\bfK\t}^2	+ c \varrho_1 \|\bfeud\|_{\bfK\t}^2 + c \, \|\bfew\|_{\bfK\t}^2 
	\\
%	+ \varrho \|\bfeud\|_{\bfA\t}^2 \\
	&\ + c \big( \|\bfdu\|_{\bfM\t}^2 + \|\bfdw\|_{\bfM\t}^2 + \|\bfdwd\|_{\bfM\t}^2 \big) .
	\end{aligned}
\end{equation}
%[kb: At first I could not find where this term comes from. It is due to the old version (using dual norm estimates) of the defect bounds \eqref{eq:energy est - ii.a - defect terms}! This needs to be settled! Maybe it's useful for non-linear mobility terms.] [cb: I still cannot figure out where it comes from.] 
(ii.b) We now test \eqref{eq:diff error equations - u} by $\bfewdt$ and \eqref{eq:diff error equations - w} by $\bfeudt$, then
% We reorder to arrive at
%%
%\begin{align*}
%\bfewdt  \bfM\t \bfeud + \bfewdt \bfA\t \bfew = &- \bfewdt \bfMd\t \bfeu + \bfewdt \bfM\t \bfdu, \\
%\bfeudt \bfM\t \bfewd\t - \eps \,\bfeudt \bfA\t \bfeud  = &\ \eps\bfeudt \bfAd\t \bfeu  -  \bfeudt \bfMd\t \bfew \\
%& +\eps\inv\bfeudt  \diff\big(\bfW(\bfu\t) - \bfW(\bfu^*\t)\big)\\
%& + \bfeudt \bfMd\t \bfdw + \bfeudt \bfM\t \bfdwd.
%\end{align*}
%%
subtracting the second from the first equation to cancel the mixed term $\bfewdt \bfM\t \bfeud$ and using the product rule again we obtain
\begin{equation}
\label{eq:energy est - ii.b - pre est}
\begin{aligned}
	\|\bfeud\|_{\bfA\t}^2 + \frac{1}{2}\diff \|\bfew\|_{\bfA\t}^2 = &\ - \bfewdt \bfMd\t \bfeu - \bfewdt \bfM\t \bfdu  \\
	&\ - \bfeudt \bfAd\t \bfeu +  \bfeudt \bfMd\t \bfew + \frac{1}{2} \bfewt \bfAd\t \bfew \\
	&\ +  \bfewdt \big(\bff(\bfu\t) - \bff(\bfu^*\t)\big) \\
	&\ -  \bfeudt \diff\big(\bfg(\bfu\t) - \bfg(\bfu^*\t)\big) \\
	&\ + \bfeudt \bfMd\t \bfdw  + \bfeudt \bfM\t \bfdwd.
\end{aligned}
\end{equation}
The terms are again estimated separately. The terms with time derivatives of matrices on the right-hand sides of \eqref{eq:energy est - ii.b - pre est} are bounded, using Lemma~\ref{lemma:MatrixDerivativeEstimate}, by
\begin{equation}
\label{eq:energy est - ii.b - diff terms}
	\begin{aligned}
		&\ - \bfeudt \bfAd\t \bfeu + \bfeudt \bfMd\t \bfew + \frac{1}{2} \bfewt \bfAd\t \bfew \\
		\leq &\ c \|\bfeud\|_{\bfA\t} \|\eu\|_{\bfA\t} + c \, \|\bfeud\|_{\bfM\t} \|\ew\|_{\bfM\t} + c \,  \|\ew\|_{\bfA\t}^2 .
	\end{aligned}
\end{equation}
The differentiated non-linear term is bounded, similarly to \eqref{eq:energy est - ii.a - non-linear term}, by
\begin{equation}
\label{eq:energy est - ii.b - non-linear term}
\begin{aligned}
	 \bfeudt \diff\big(\bfg(\bfu\t) - \bfg(\bfu^*\t)\big) \leq & \ c \,\|\bfeud\|_{\bfM\t} \big( \|\eu\|_{\bfK\t} + \|\bfeud\|_{\bfK\t} \big) \\
	 \leq & \ c \varrho_2 \|\bfeud\|_{\bfM\t}^2 + c \, \|\bfeu\|_{\bfK\t}^2 \\
	 & \ + \frac{c_0}{4 \varrho_3} \|\bfeud\|_{\bfM\t}^2 + c_0 \varrho_3 \|\bfeud\|_{\bfK\t}^2,
\end{aligned}
\end{equation}
with a particular constant $c_0 > 0$ (independent of $h$, but depending on $F''$, viz.~on the constant in \eqref{eq:energy est - ii.a - non-linear term}).
%separating $\|\bfeud\|_{\bfM\t}$ with a particular constant $c_0 > 0$ (independent of $h$, but depending on $F''$, viz.~on the constant in \eqref{eq:energy est - ii.a - non-linear term}) from $\|\bfeud\|_{\bfK\t}$ with a small factor $\varrho_3 > 0$.
The defect terms are bounded, similarly as before, by
\begin{equation}
\label{eq:energy est - ii.b - defect terms}
\begin{aligned}
&\ - \bfeudt \bfM\t \bfdu - \bfewt \bfMd\t \bfdw -\bfewt \bfM\t \bfdwd\t \\
\leq &\ c \|\bfeud\|_{\bfM\t} \|\du\|_{\bfM\t} + c \, \|\ew\|_{\bfM\t} \|\dw\|_{\bfM\t} + c \, \|\ew\|_{\bfM\t} \|\bfdwd\|_{\bfM\t} .
\end{aligned}
\end{equation}
Let us highlight that it is not possible to directly estimate the terms containing $\bfewd\t$ in their current form, because there is no term on the left-hand side to absorb them. Therefore, we first rewrite them using the product rule, and estimate them using Lemma~\ref{lemma:MatrixDerivativeEstimate}, to obtain
\begin{equation}
\label{eq:energy est - ii.b - dot e_w terms}
\begin{aligned}
	 \bfewdt \big(\bff(\bfu\t) - \bff(\bfu^*\t)\big)  = &\ \diff \big( \bfewt \big(\bff(\bfu\t) - \bff(\bfu^*\t)\big) \big) \\
	& \ - \bfewt \diff\big(\bff(\bfu\t) - \bff(\bfu^*\t)\big)\\
	\leq & \ \diff \big( \bfewt \big(\bff(\bfu\t) - \bff(\bfu^*\t)\big) \big) \\
	& \ + c \|\ew\|_{\bfM\t}\big(\|\eu\|_{\bfK\t} + \|\bfeud\|_{\bfK\t} \big) , \\
	\bfewdt \bfMd\t \bfeu  = &\ \diff \big( \bfewt \bfMd\t \bfeu \big) - \bfewt  \bfMdd\t \bfeu  - \bfewt \bfMd\t \bfeud \\
	\leq &\  \diff \big( \bfewt \bfMd\t \bfeu \big) + c \|\ew\|_{\bfM\t} \big( \|\eu\|_{\bfM\t} + \|\bfeud\|_{\bfM\t} \big) , \\
	\bfewdt \bfM\t\bfdu = &\ \diff \big( \bfewt \bfM\t \bfdu \big) - \bfewt \bfMd\t \bfdu - \bfewt \bfM\t \bfdud \\
	\leq &\ \diff \big( \bfewt \bfM\t \bfdu \big) + c \|\ew\|_{\bfM\t} \big( \|\du\|_{\bfM\t} + \|\bfdud\|_{\bfM\t} \big) .
\end{aligned}
\end{equation}
Altogether, by plugging in \eqref{eq:energy est - ii.b - dot e_w terms}--\eqref{eq:energy est - ii.b - defect terms} into \eqref{eq:energy est - ii.b - pre est}, then using Young's inequalities (with a small number $\varrho_2>0$), we obtain the second energy estimate of this part:
% \begin{equation}
% \label{eq:energy est - ii.b}
% 	\begin{aligned}
% 		\frac{1}{2}\|\bfeud\|_{\bfA\t}^2 + \frac{1}{2}\diff \|\bfew\|_{\bfA\t}^2 
% 		\leq &\ \redon \varrho_1 \|\bfeud\|_{\bfK\t}^2 \redoff + c_0 \|\bfeud\|_{\bfM\t}^2 + c \|\bfeu\|_{\bfK\t}^2 	+ c \, \|\bfew\|_{\bfK\t}^2 \\
% 		&\ + c  \big( \|\du\|_{\bfM\t}^2 + \|\bfdud\|_{\bfM\t}^2 + \|\dw\|_{\bfM\t}^2 + \|\bfdwd\|_{\bfM\t}^2 \big)  \\
% 		&\ + \diff \big( \bfewt \big(\bff(\bfu\t) - \bff(\bfu^*\t)\big) \big) \\
% 		&\ - \diff \bfewt \bfMd\t \bfeu - \diff \bfewt \bfM\t \bfdu,
% 	\end{aligned}
% \end{equation}
\begin{equation}
\label{eq:energy est - ii.b}
	\begin{aligned}
		\|\bfeud\|_{\bfA\t}^2 + \frac{1}{2}\diff \|\bfew\|_{\bfA\t}^2 
		\leq &\ \frac{c_0}{4 \varrho_3} \|\bfeud\|_{\bfM\t}^2 + (c \varrho_2 + c_0 \varrho_3) \|\bfeud\|_{\bfK\t}^2 \\
		&\ + c \|\bfeu\|_{\bfK\t}^2 	+ c \, \|\bfew\|_{\bfK\t}^2 \\
		&\ + c  \big( \|\du\|_{\bfM\t}^2 + \|\bfdud\|_{\bfM\t}^2 + \|\dw\|_{\bfM\t}^2 + \|\bfdwd\|_{\bfM\t}^2 \big)  \\
		&\ + \diff \big( \bfewt \big(\bff(\bfu\t) - \bff(\bfu^*\t)\big) \big) \\
		&\ - \diff \bfewt \bfMd\t \bfeu - \diff \bfewt \bfM\t \bfdu.
	\end{aligned}
\end{equation}
%\begin{equation}
%\label{eq: energy4}
%\begin{aligned}
%	&\eps\|\bfeud\|_{\bfA\t}^2 + \frac{1}{2}\diff \|\bfew\|_{\bfA\t}^2 \\
%	\leq &c \|\bfeu\|_{\bfK\t}^2 	+ c \|\bfew\|_{\bfK\t}^2 + C_{W''}\,\|\bfeud\|_{\bfM\t}^2 + 7 \,\varrho\,\|\bfeud\|_{\bfK\t}^2\\
%	&+ c \|\bfdu\|_{*,t}^2 + c \|\bfdud\|_{*,t}^2 + c\, \|\bfdw\|_{\bfM\t}^2 \\
%	&+c \|\bfdwd\|_{*,t}^2 - \diff \bfewt \bfMd\t \bfeu + \diff \bfewt \bfM\t \bfdu,
%	\end{aligned}
%\end{equation}
(ii.c) 
We now take the weighted combination of the energy estimates from (ii.a) and (ii.b): multiplying the estimate \eqref{eq:energy est - ii.a} by $\frac{3 c_0}{4 \varrho_3}$ and adding it to the estimate \eqref{eq:energy est - ii.b}. Collecting the terms and directly absorbing the term $c_0 \|\bfeud\|_{\bfM\t}^2$ on the right-hand side of \eqref{eq:energy est - ii.b} to the left-hand side, (and choosing $\varrho_1,\varrho_2,\varrho_3>0$ small enough for absorption of the $\|\bfeud\|_{\bfK\t}^2$ terms from the left-hand side to the right-hand side), we obtain 
\begin{equation}
	\begin{aligned}
	\label{eq:energy estimate - ii - pre estimates}
		&\  \min\bigg\{\frac{c_0}{2 \varrho_3},\frac{1}{2} \bigg\} \|\bfeud\|_{\bfK\t}^2  + \, \frac{3c_0}{2 \varrho_3} \diff \|\bfew\|_{\bfM\t}^2 + \frac{1}{2}\diff \|\bfew\|_{\bfA\t}^2 \\
		\leq &\ c \|\bfeu\|_{\bfK\t}^2 + c \,  \|\bfew\|_{\bfK\t}^2 \\
		&\ + c  \, \big( \|\du\|_{\bfM\t}^2 + \|\bfdud\|_{\bfM\t}^2 + \|\dw\|_{\bfM\t}^2 + \|\bfdwd\|_{\bfM\t}^2 \big)  \\
		&\ + \diff \big( \bfewt \big(\bff(\bfu\t) - \bff(\bfu^*\t)\big) \big) - \diff \bfewt \bfMd\t \bfeu - \diff \bfewt \bfM\t \bfdu .
	\end{aligned}
\end{equation}
Integrating the above inequality \eqref{eq:energy estimate - ii - pre estimates} from $0$ to $t \leq t^*$, and then dividing by $\min\big\{\frac{c_0}{2 \varrho_3},\frac{1}{2} \big\}$, yields
\begin{equation*}
	\begin{aligned}
		&\ \|\bfew\t\|_{\bfK\t}^2 + \int_0^t{\|\bfeud\s\|_{\bfK\s}^2} \d s \\
		\leq &\ c \int_0^t{\|\bfeu\s\|_{\bfK\s}^2} \d s + c \, \int_0^t{\|\bfew\s\|_{\bfK\s}^2} \d s \\
		&\ + c \int_0^t \big( \|\bfdu\s\|_{\bfM(s)}^2 + \|\bfdud\s\|_{\bfM(s)}^2 + \|\bfdw\s\|_{\bfM(s)}^2 + \|\bfdwd\s\|_{\bfM(s)}^2 \big) \d s \\
		&\  + \bfewt \big(\bff(\bfu\t) - \bff(\bfu^*\t)\big) - \bfew (0) \big(\bff(\bfu(0)) - \bff(\bfu^*(0))\big) \\
		&\ - c \, \bfewt\t \bfMd\t \bfeu\t + c\, \bfewt(0)\bfMd(0) \bfeu(0)\\
		&\ - c \,  \bfewt\t \bfM\t \bfdu\t + c\, \bfewt(0) \bfM(0) \bfdu(0)\\
		&\ + \|\bfew(0)\|_{\bfK(0)}^2.
	\end{aligned}
\end{equation*}
We estimate the newly obtained non-integrated terms on the right-hand side using Lemma~\ref{lemma:MatrixDerivativeEstimate}, Cauchy--Schwarz and Young's inequalities, the estimate for the non-linear term \eqref{eq:energy est - i - non-linear term 2}, a further absorption, and using that $\eu(0)$ and $\ew(0)$ are zero, we then obtain
\begin{equation}
\label{eq:energy estimate - ii}
	\begin{aligned}
		&\ \|\bfew\t\|_{\bfK\t}^2 + \int_0^t{\|\bfeud\s\|_{\bfK\s}^2} \d s\\
		\leq &\ c \int_0^t{\|\bfeu\s\|_{\bfK\s}^2} \d s + c \,  \int_0^t{\|\bfew\s\|_{\bfK\s}^2} \d s \\
		&\ + c \int_0^t \big( \|\bfdu\s\|_{\bfM(s)}^2 + \|\bfdud\s\|_{\bfM(s)}^2 + \|\bfdw\s\|_{\bfM(s)}^2 + \|\bfdwd\s\|_{\bfM(s)}^2 \big) \d s \\
		&\  + c_1 \|\bfeu\t\|_{\bfK\t}^2 %+ \varrho \, c \|\bfew\|_{\bfM\t}^2 
		+ c \|\bfdu\t\|_{\bfM\t}^2 ,
	\end{aligned}
\end{equation}
with a $c_1 > 0$.
This energy estimate now contains the (previously) critical term $\bfeud$ on the left-hand side.
Without the construction in Section~\ref{section:modified problem} the initial values  for $\bfw$  would not vanish and a term $\|\bfew(0)\|_{\bfK(0)}^2$ would remain on the right-hand side. This $H^1$ norm error however cannot be bounded with the sufficient order. 

\textit{Combining the energy estimates:}
We now take again a $c_1$-weighted linear combination (in order to absorb the term  $c_1 \|\bfeu\|_{\bfK\t}^2$)  of the two energy estimates \eqref{eq:energy estimate - i} and \eqref{eq:energy estimate - ii}, to obtain
\begin{equation}
\label{eq:pre Gronwall}
	\begin{aligned}
		&\ \|\bfeu\t\|_{\bfK\t}^2 + \|\bfew\t\|_{\bfK\t}^2
		 + \int_0^t{\|\bfeud\s\|_{\bfK\s}^2} \d s +  \int_0^t{\|\ew\s\|_{\bfK\s}^2} \d s \\
		\leq &\ \varrho_0 \int_0^t{\|\bfeud\s\|_{\bfM\s}^2} \d s \\
		&\ + c \int_0^t{\|\bfeu\s\|_{\bfK\s}^2} \d s+ c \, \int_0^t{\|\bfew\s\|_{\bfK\s}^2} \d s\\
		&\ + c \int_0^t \big( \|\bfdu\s\|_{\bfM(s)}^2 + \|\bfdud\s\|_{\bfM(s)}^2 + \|\bfdw\s\|_{\bfM(s)}^2 + \|\bfdwd\s\|_{\bfM(s)}^2 \big) \d s \\
		&\ + c \,\|\bfdu\t\|_{\bfM\t}^2 .
	\end{aligned}
\end{equation}
By choosing $\varrho_0$ small enough, the first term (previously the critical term) on the left-hand side is now absorbed.
This enables us to use Gronwall's inequality, which then yields the stated stability estimate on $[0,t^*]$.

Now, it only remains to show that, in fact, $t^* = T$, for $h$ sufficiently small. The proved stability bound (for $0 \leq t \leq t^*$) together with the assumed defect bounds \eqref{eq:defect bounds - assumed} imply
\begin{equation*}
	 \|\bfeu\t\|_{\bfK\t}^2 + \|\bfew\t\|_{\bfK\t}^2 \leq c h^\kappa , \qquad \text{with} \quad \kappa \geq 2 .
\end{equation*}
By an inverse estimate, see, e.g.~\cite[Theorem~4.5.11]{BreS08}, we have, for $0 \leq t \leq t^*$,
\begin{equation}
\label{eq: InverseInequality}
	\begin{aligned}
		 \|e_{u_h}(\cdot,t)\|_{W^{1,\infty}(\Ga_h\t)}  \leq &\  c h^{-d/2} \|e_{u_h}(\cdot,t)\|_{H^1(\Ga_h\t)} \\
		\leq &\ c h^{-d/2} \|\bfeu \t\|_{\bfK\t} \leq c \, C h^{\kappa - d/2} \leq \frac{1}{2} \, h^{\frac{\kappa - d/2}{2}},
	\end{aligned}
\end{equation}
for sufficiently small $h$. Therefore, the bound \eqref{eq: tStarBound} is extended beyond $t^*$, which contradicts the maximality of $t^*$ unless we already have $t^* = T$. We hence proved the stability bound \eqref{eq:stability bound} over $[0,T]$, and completed the proof.
\qed\end{proof}
 
\begin{remark}
	The dimensional assumptions $\Ga\t \subset \R^{d+1}$ for $d = 1,2$ are not entirely restrictive. For a higher dimensional surface, the argument \eqref{eq: InverseInequality} can be repeated for a $\kappa$ sufficiently large, that is requiring a finite element basis of sufficiently high order, depending on the dimension $d$. 
\end{remark} 
 
\section{Consistency}

\label{section:consistency}
Before we turn to proving consistency of the spatial semi-discretisation and to the proof of Theorem~\ref{theorem:semi-discrete convergence}, we collect some preparatory results: error estimates of the nodal interpolations on the surface, for the Ritz map, and some results which estimate various geometric errors. Most of these results were shown in \cite{DziukElliott_L2,Demlow,highorderESFEM}.

Let us briefly recall our assumptions on the evolving surface and on its discrete counterpart, from Section~\ref{section:CH} and \ref{subsection:ESFEM}: $\Ga\t$ is a closed  smooth (at least $C^2$) surface in $\R^{d+1}$ with $ d\leq 3$, evolving with the surface velocity $v$, with regularity $v(\cdot,t),\mat v(\cdot,t) \in W^{k+1,\infty}(\Ga\t)$ uniformly in time. The discrete surface $\Ga_h\t$ is a $k$-order interpolation of $\Ga\t$ at each time, and therefore its velocity $V_h$ is the nodal interpolation of $v$ on $\Ga_h\t$, see \eqref{eq:discrete velocity} and Section~\ref{subsection:ESFEM}. 

\subsection{Geometric errors}

\subsubsection{Interpolation error estimates}

The following result gives estimates for the error in the interpolation. Our setting follows that of Section 2.5 of \cite{Demlow}.

Let us assume that the surface $\Ga\t$ is approximated by the interpolation surface $\Ga_h\t$ of order $k$. Then for any $u \in H^{k+1}(\Ga\t)$, there is a unique $k$-order surface finite element interpolation $\widetilde I_h u \in S_h\t$, furthermore we set $(\widetilde I_h u)^\ell = I_h u$.
\begin{lemma}
	\label{lemma:interpolation error estimate}
	For any $u\ct \in H^{k+1}(\Ga\t)$ for all $0 \leq t \leq T$. The surface interpolation operator $I_h$ of order $k$ satisfies the following error estimates, for $u = u\ct$ and for $0 \leq t \leq T$,
	\begin{align*}
	\|u - I_h u\|_{L^2(\Ga\t)} + h\|\nbg(u - I_h u)\|_{L^2(\Ga\t)} &\leq c  h^{k+1} \|u\|_{H^{k+1}(\Ga \t)} ,\\
	\|u - I_h u\|_{L^\infty(\Ga\t)} + h\|\nbg(u - I_h u)\|_{L^\infty(\Ga\t)} &\leq c  h^{k+1} \|u\|_{W^{k+1,\infty}(\Ga \t)} ,
	\end{align*}
	with a constant $c > 0$ independent of $h$ and $t$, but depending on $v$ and $\GT$.
\end{lemma}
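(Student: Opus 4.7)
The plan is to transfer the problem from the smooth surface $\Gat$ to the discrete surface $\Gah(t)$ via the lift, reduce to element-wise estimates on a reference element, and then combine with the geometric approximation bounds for the $k$-order interpolating surface from \cite{Demlow,highorderESFEM}. The norm equivalence \eqref{eq:norm equivalence} shows that up to $h$-independent constants it is enough to prove the bounds on $\Gah(t)$ for $u^{-\ell}$ and $\widetilde{I}_h u = (I_h u)^{-\ell}$, where the interpolant coincides with the standard nodal interpolant on the polynomial elements of $\Gah(t)$.

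First I would fix a single curved (high-order) element $E_h \subset \Gah(t)$ with associated exact element $E = E_h^\ell \subset \Gat$ and consider the pullback $\widehat E$ to a flat reference simplex via the polynomial parametrisation $F_{E_h} : \widehat E \to E_h$. Since $\Gah(t)$ is a $k$-order interpolation of $\Gat$, one has $\|F_{E_h} - F_E\|_{W^{j,\infty}(\widehat E)} \leq c h^{k+1-j}$ for $0\leq j\leq k+1$, together with uniform bounds on the Jacobians and their inverses (cf.~\cite[Propositions~2.3,~2.7]{Demlow} and \cite[Section~3]{highorderESFEM}). Pulling $u^{-\ell}$ and its nodal interpolant back to $\widehat E$ yields an ordinary polynomial interpolation problem on the reference element.

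Next I would apply the Bramble--Hilbert lemma on $\widehat E$ to the pulled-back functions, obtaining reference-element bounds of the form $\|\widehat u - \widehat I_k \widehat u\|_{L^2(\widehat E)} + \|\nabla(\widehat u - \widehat I_k \widehat u)\|_{L^2(\widehat E)} \leq c\,|\widehat u|_{H^{k+1}(\widehat E)}$, and the analogous $L^\infty/W^{k+1,\infty}$ version. A standard scaling argument, using the equivalence of the parametric tangential gradients with the Euclidean gradient on $\widehat E$ up to powers of $h$, then transfers these back to element-wise estimates
\begin{equation*}
\|u^{-\ell} - \widetilde I_h u\|_{L^2(E_h)} + h\|\nbgh(u^{-\ell} - \widetilde I_h u)\|_{L^2(E_h)} \leq c\, h^{k+1}\|u^{-\ell}\|_{H^{k+1}(E_h)},
\end{equation*}
and similarly in the $L^\infty$ norm. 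Squaring and summing over all elements, and then using the stability of the inverse lift in Sobolev norms (again from \cite{Demlow,highorderESFEM}) combined with \eqref{eq:norm equivalence}, gives the desired global bounds on $\Gat$.

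The step that requires most care is the last one: the ``extra'' geometric error coming from $\Gah(t)\neq\Gat$ must not pollute the optimal order. This is why the $k$-order approximation of the surface is critical—one needs $\|F_{E_h}-F_E\|_{W^{1,\infty}}\lesssim h^k$ so that the discrete tangential gradient of $\widetilde I_h u$ differs from the true tangential gradient of $I_h u$ only by an $O(h^k)$ factor on $\nabla u$, which then combines with the interpolation error to give $O(h^{k+1})$ overall. Uniformity in $t$ follows because the geometric bounds for $F_{E_h}$ and $V_h$ hold uniformly in $t$ under the assumed regularity $v,\mat v \in W^{k+1,\infty}$, so the constant $c$ depends only on $v$ and $\GT$ as claimed.
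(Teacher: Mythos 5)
Your argument is correct and is essentially the proof the paper relies on: the lemma is stated without proof and deferred to \cite[Section~2.5, Proposition~2.7]{Demlow} and \cite{highorderESFEM}, whose argument is exactly your element-wise pullback to the reference simplex, Bramble--Hilbert plus scaling, and control of the $O(h^{k})$ geometric perturbation of the tangential gradient coming from the $k$-order surface approximation. The only cosmetic caveat is that \eqref{eq:norm equivalence} is stated in the paper for finite element functions, so for the error $u-I_hu$ you should invoke the general lift/measure equivalence from \cite{Demlow} rather than \eqref{eq:norm equivalence} itself; this changes nothing in substance.
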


\subsubsection{Discrete surface velocities}

This section gives a definition of a discrete velocity on the exact surface $\Ga\t$ associated to $V_h$, and explores approximation results for the discrete velocities.
The following result, recalled from \cite[Lemma~3.1.6]{Beschle_thesis}, shows boundedness of the discrete velocity $V_h$, using the fact that it is the interpolation of $v$. The proof is based on the interpolation error estimate Lemma~\ref{lemma:interpolation error estimate} and the interchange formulas \eqref{eq:interchange formulas}.
\begin{lemma}
	\label{lemma:VhBhBound}
	Assume that $v$ and $\mat v$ are in $W^{{k+1},\infty}(\Ga\t)$. Then, for $h \leq h_0$ sufficiently small, the following bounds hold:
	\begin{equation*}
	\begin{aligned}
	\|V_h\|_{W^{1,\infty}(\Ga_h\t)} 
%	 + \| \mathcal{B}_h (V_h)\|_{L^{\infty}(\Ga_h\t)} 
	\leq &\ c \|v\|_{W^{{k+1},\infty}(\Ga\t)}, \\
	\|\mat_h V_h\|_{W^{1,\infty}(\Ga_h\t)} \leq &\ c \|\mat v\|_{W^{{k+1},\infty}(\Ga\t)}, \\
	\|\mat_h (\nb_{\Ga_h \t} \cdot V_h)\|_{L^{\infty}(\Ga_h\t)} \leq &\ c \big( \|\mat v\|_{W^{{k+1},\infty}(\Ga\t)} + \|v\|_{W^{{k+1},\infty}(\Ga\t)}^2 \big),
	\end{aligned}
	%\label{eq: VhBhBound}
	\end{equation*}
	where the constant $c > 0$ is independent of $h$ and $t$, but depends on $\GT$.
\end{lemma}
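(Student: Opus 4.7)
The plan is to exploit that, by the very definition \eqref{eq:discrete velocity}, $V_h(\cdot,t)=\widetilde{I}_h v(\cdot,t)$ is the $k$-th order surface finite element interpolation of $v$ onto $\Ga_h\t$. Consequently, each of the three bounds reduces, via the norm equivalence \eqref{eq:norm equivalence} and a triangle inequality $\|V_h^\ell\|_{W^{1,\infty}}\le \|V_h^\ell-v\|_{W^{1,\infty}}+\|v\|_{W^{1,\infty}}$, to an application of the interpolation error estimate from Lemma~\ref{lemma:interpolation error estimate} (whose $h^{k}$ factor can be absorbed into the constant once $h\le h_0$). This is how the first bound
\[
\|V_h\|_{W^{1,\infty}(\Ga_h\t)}\le c\bigl(\|v-I_h v\|_{W^{1,\infty}(\Ga\t)}+\|v\|_{W^{1,\infty}(\Ga\t)}\bigr)\le c\|v\|_{W^{k+1,\infty}(\Ga\t)}
\]
follows immediately.

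For the second bound, the key identification is $\mat_h V_h=\widetilde{I}_h(\mat v)$. Using the transport property \eqref{eq:transport property} of the basis functions together with \eqref{eq:discrete velocity} and recalling that the nodes $x_j(t)$ move with the exact surface velocity $v$ (so that $\tfrac{\d}{\d t}v(x_j(t),t)=\mat v(x_j(t),t)$), we obtain
\[
\mat_h V_h(\cdot,t)=\sum_{j=1}^N\mat v(x_j(t),t)\,\phi_j(\cdot,t)=\widetilde{I}_h(\mat v)(\cdot,t).
\]
The same triangle-inequality-plus-interpolation argument as above, now applied to $\mat v\in W^{k+1,\infty}(\Ga\t)$, gives $\|\mat_h V_h\|_{W^{1,\infty}(\Ga_h\t)}\le c\|\mat v\|_{W^{k+1,\infty}(\Ga\t)}$.

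For the third bound, I would invoke the interchange formula \eqref{eq:interchange formulas} element-wise with $w_h=V_h$:
\[
\mat_h(\nb_{\Ga_h\t}\cdot V_h)=\nb_{\Ga_h\t}\cdot\mat_h V_h-(I-\nu_h\nu_h^T)\nb_{\Ga_h\t}V_h:\nb_{\Ga_h\t}V_h.
\]
Taking $L^\infty(\Ga_h\t)$ norms and a triangle inequality, the first term is bounded by $\|\mat_h V_h\|_{W^{1,\infty}(\Ga_h\t)}\le c\|\mat v\|_{W^{k+1,\infty}(\Ga\t)}$ from the second estimate, while the second term is bounded by $c\|V_h\|_{W^{1,\infty}(\Ga_h\t)}^2\le c\|v\|_{W^{k+1,\infty}(\Ga\t)}^2$ from the first estimate, yielding the claimed bound.

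The only mildly delicate point, and the step I would spend the most care on, is the identification $\mat_h V_h=\widetilde{I}_h(\mat v)$: it relies on the precise convention that $V_h$ is the nodal Lagrange interpolant with respect to nodes transported by the \emph{exact} velocity $v$, so that the time derivative of the nodal value coincides with $\mat v$ at the node. Once this identification is made, no step is more than an interpolation estimate plus a triangle inequality, and the interchange formula \eqref{eq:interchange formulas}, valid element-wise on $\Ga_h\t$, handles the remaining quadratic term in the third bound.
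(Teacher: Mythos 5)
Your proposal is correct and follows exactly the route the paper indicates for this lemma (which is only sketched there, with the full proof deferred to the cited thesis): the identification of $V_h$ and $\mat_h V_h$ as nodal interpolants of $v$ and $\mat v$ via the transport property \eqref{eq:transport property} and the node ODE \eqref{eq:ODE for positions}, combined with the $W^{1,\infty}$ interpolation estimate of Lemma~\ref{lemma:interpolation error estimate} and the interchange formula \eqref{eq:interchange formulas} for the third bound. You also correctly flag the one genuinely delicate step, namely $\mat_h V_h=\widetilde I_h(\mat v)$, so nothing is missing.
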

To $V_h$ we associate a discrete surface (or material) velocity of $\Ga\t$, denoted by $v_h$. It is the surface velocity of the lifted material points $y\t = (x\t)^\ell \in \Ga\t$. The edges of a lifted element evolve with this velocity $v_h$, which is not the interpolation of $v$ in $S_h^\ell\t$. For more details we refer to \cite[Definition~4.3]{DziukElliott_L2} and \cite[Section~5.4]{DziukElliott_acta}. 

Here we recall an explicit formula for $v_h$: for $x\t \in \Ga_h\t$ with $y\t = x^\ell\t$,
\begin{equation}
\label{eq:definition v_h}
v_h(y\t,t) = \pa_t y(x\t,t) + V_h(x\t,t) \cdot \nb y(x\t,t) ,
\end{equation}
with $y\t = y(x\t,t) \in \Ga\t$ denoting the lift of $x\t \in \Ga_h\t$, cf.~Section~\ref{section:lift}, i.e.~the unique solution to $x\t = y(x\t,t) + d(x\t,t) \, \nu(y(x\t,t),t)$. For an explicit formula using $V_h$ and a distance function we refer to \cite[equation~$(4.7)$]{DziukElliott_L2}. 

Apart from the original material derivative $\mat$ on $\Ga\t$, a discrete material derivative associated to the velocity $v_h$ is also defined on $\Ga\t$, see~\cite[equation~$(4.9)$]{DziukElliott_L2}, for $\vphi(\cdot,t): \Ga \t \to \mathbb{R}$ (element-wise) by
\begin{equation}
\label{eq:discrete material derivative on Ga}
\mat_h \vphi(\cdot,t) = \pa_t \bar \vphi(\cdot,t) + v_h(\cdot,t) \cdot \nb \bar \vphi(\cdot,t)  \quadfor 0 \leq t \leq T,
\end{equation}
where $\bar \vphi(\cdot,t)$ is an extension into a small neighbourhood of $\Ga\t$. 
That is we have the following three different material derivatives:
\begin{alignat*}{3}
	\text{for } \vphi = \vphi(\cdot,t) : \Ga\t \to \R: &\ \qquad \mat \vphi = & &\ \pa_t \bar \vphi + v \cdot \nb \bar \vphi , \\
	&\ \qquad \mat_h \vphi = & &\ \pa_t \bar \vphi + v_h \cdot \nb \bar \vphi , \\
	\text{for } \vphi_h = \vphi_h(\cdot,t) : \Ga_h\t \to \R: &\ \qquad \mat_h \vphi_h = & &\ \pa_t \bar \vphi_h + V_h \cdot \nb \bar \vphi_h .
\end{alignat*}
We note here that it will be always clear from the context whether the discrete material derivative $\mat_h$ is meant on $\Ga\t$ associated to $v_h$, or on $\Ga_h\t$ associated to $V_h$. 
%Furthermore, we note that $\mat_h (\vphi_h^\ell) = (\mat_h \vphi_h)^\ell$. More details in $L^2$ estimates. 

From \cite[Lemma~5.4]{highorderESFEM} we recall high-order error bounds between the velocity $v_h$ of the lifted material points and the surface velocity $v$ (for the case $k=1$, and without material derivative, $l = 0$, we refer to \cite{DziukElliott_L2}).
\begin{lemma}
	\label{lemma:velocity error estimate}
	The difference between the continuous velocity $v$ and the discrete velocity $v_h$ on $\Ga\t$ is estimated by
	\begin{equation*}
	\label{eq:velocity error estimate}
	\|(\mat_h)^{(l)}(v - v_h)\|_{L^{\infty}(\Ga\t)} + h\|\nb_{\Ga \t} (\mat_h)^{(l)}(v - v_h)\|_{L^{\infty}(\Ga\t)} \leq c_lh^{k+1},
	\end{equation*}
	for $l \geq 0$, with a constant $c_l > 0$ independent of $h$ and $t$, but depending on the surface velocity $v$. 
\end{lemma}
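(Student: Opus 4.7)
The plan is to compare $v$ and $v_h$ pointwise on $\Ga\t$ through the implicit lift relation $x = y(x,t) + d(x,t)\,\nu(y(x,t),t)$, reducing the question to the interpolation error of $V_h$ for $v$ on $\Ga_h\t$, which is of order $h^{k+1}$ in $L^\infty$ (and $h^{k}$ in the $W^{1,\infty}$ seminorm) by Lemma~\ref{lemma:interpolation error estimate} under the assumption $v\in W^{k+1,\infty}(\Ga\t)$.

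First I would derive a pointwise identity for $v-v_h$ on $\Ga\t$. Starting from the defining formula $v_h(y,t)=\pa_t y(x,t)+V_h(x,t)\cdot\nb y(x,t)$ in \eqref{eq:definition v_h} and differentiating $x=y+d\,\nu(y,t)$ in $t$ (with $\pa_t x = V_h(x,t)$ since nodes are transported by $V_h$ by construction), the chain rule yields a representation of the shape
\begin{equation*}
 v(y,t)-v_h(y,t)=\mathcal{P}(y,t)\bigl(v(y,t)-V_h^{\ell}(y,t)\bigr)+\mathcal{E}(y,t),
\end{equation*}
where $\mathcal{P}$ is a smooth operator related to the tangential projector onto $T_y\Ga\t$ and $\mathcal{E}$ collects terms proportional to the signed distance $d$ or to the normal defect $\nu-\nu_h^{\ell}$, each of which is of size $O(h^{k+1})$ in $L^\infty$ by the standard geometric estimates for an order-$k$ interpolating surface \cite{Demlow,highorderESFEM}. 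Combining this with the interpolation bound $\|V_h - v\|_{L^\infty(\Ga_h\t)} \leq c\,h^{k+1}$ and the norm equivalence \eqref{eq:norm equivalence} gives the $L^\infty$ bound for $l=0$.

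For the gradient piece I would apply $\nb_{\Ga\t}$ to the same identity. Only $\|\nb_{\Ga\t}(v-V_h^{\ell})\|_{L^\infty}\leq c\,h^{k}$ is then available, but the factor $h$ on the left-hand side of the assertion absorbs the missing power; derivatives of $\mathcal{P}$, $\mathcal{E}$ and of the lift map $y(\cdot,t)$ remain bounded uniformly in $h$ and $t$ by standard results in \cite{Demlow,highorderESFEM}. For $l\geq 1$ I would proceed by induction: applying the discrete material derivative $\mat_h$ on $\Ga\t$ (defined in \eqref{eq:discrete material derivative on Ga}) to the identity and invoking the continuous/discrete interchange rules of the type \eqref{eq:interchange formulas} produces a leading term of the same structure with $\mat v$ in the role of $v$ and $\mat_h V_h$ (which is essentially the interpolation of $\mat v$) in the role of $V_h$, plus lower-order contributions already controlled by the induction hypothesis. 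The assumed regularity of $(\mat)^{(j)}v$ together with the uniform bounds for $V_h$ and $\mat_h V_h$ from Lemma~\ref{lemma:VhBhBound} closes the inductive step.

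The main obstacle I anticipate is the careful bookkeeping of two different material derivatives---$\mat_h$ on $\Ga\t$ associated with $v_h$ and $\mat_h$ on $\Ga_h\t$ associated with $V_h$---and of the geometric quantities produced each time these derivatives are interchanged with tangential gradients. Each interchange introduces additional terms that must be shown to preserve the $h^{k+1}$ rate; for higher $l$ this becomes a bookkeeping exercise relying on the regularity $v,\mat v,\ldots,(\mat)^{(l)}v\in W^{k+1,\infty}(\Ga\t)$ and on the high-order geometric perturbation estimates. Once these dependencies are tracked, the estimate is a high-order extension of the $k=1$, $l=0$ case handled in \cite{DziukElliott_L2}, and coincides with \cite[Lemma~5.4]{highorderESFEM} from which the present statement is recalled.
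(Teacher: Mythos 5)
The paper does not prove this lemma at all: it is recalled verbatim from \cite[Lemma~5.4]{highorderESFEM} (with the $k=1$, $l=0$ case attributed to \cite{DziukElliott_L2}), so there is no in-paper argument to compare yours against. Your outline follows the same route as the cited proof: write $y = x - d\,\nu$, differentiate along the discrete material trajectories to get $v(y)-v_h(y) = \bigl(v(x)-V_h(x)\bigr) + (\mat_h d)\,\nu + d\,\mat_h\nu + \bigl(v(y)-v(x)\bigr)$ up to projections, then invoke the interpolation bound for $V_h$ and the geometric bounds $|d|,\ |\mat_h d| \le c\,h^{k+1}$, with the gradient estimate losing one power of $h$ that the statement's extra factor $h$ absorbs, and induction in $l$ for the material derivatives. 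The only caution is that the genuinely hard content sits precisely in the items you dispatch as ``standard geometric estimates'' --- the uniform-in-time bounds on $d$, $\mat_h d$, $(\mat_h)^{(j)}\mu_h$ and $\nu-\nu_h$ for an order-$k$ interpolating evolving surface --- which is where \cite{highorderESFEM} spends its effort; as a proof \emph{proposal} conditional on those inputs, yours is sound and consistent with the cited source.
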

Since we need to establish a bound for the discrete material derivatives of both defects $d_u$ and $d_w$, we recall some \emph{transport formulas} from \cite[Lemma~4.2]{DziukElliott_L2} (for any sufficiently regular functions): 
\begin{subequations}
	\label{eq:transport formula - T2 and T3}
	\begin{align}
	\label{eq:transport formula - T2}
	\diff m(u,\vphi) = &\ m(\mat_h u,\vphi) + m(u,\mat_h \vphi) + r(v_h;u,\vphi) , \\
	\label{eq:transport formula - T3}
	\diff m_h(u_h,\vphi_h) = &\ m_h(\mat_h u_h,\vphi_h) + m(u_h,\mat_h \vphi_h) + r_h(V_h;u_h,\vphi_h) .
	\end{align}
\end{subequations}
These formulas will help us to derive equations for $\mat_h d_u$ and $\mat_h d_w$ and are often used in the proofs in Section~\ref{section:geometric approx errors}.
The two transport formulae on $\Ga\t$, \eqref{eq:transport formula - m} and \eqref{eq:transport formula - T2}, arise by interpreting $\Ga\t$ as a continuous surface with velocity $v$, and as the union of curved elements (the lifted elements of $\Ga_h\t$) with velocity $v_h$, see \eqref{eq:definition v_h}, respectively. We will use them analogously to \cite[Section~7]{DziukElliott_L2}.

\subsubsection{Error estimates for the generalised Ritz map}

From \cite[Theorem~6.3 and 6.4]{highorderESFEM} we recall that the generalised Ritz map \eqref{eq:definition Ritz map} satisfies the following optimal high-order error estimates.
\begin{lemma}
	\label{lemma:Ritz map error}
	Let $u : \GT \to \R$ such that $u\ct$ and $(\mat)^{(j)} u\ct \in H^{k+1}(\Ga\t)$ for all $0 \leq t \leq T$ and $j=1,\dotsc,l$, for some $l \in \N$. Then, the error in the generalised Ritz map \eqref{eq:definition Ritz map} satisfies the bounds, for $0 \leq t \leq T$ and for $h \leq h_0$ with sufficiently small $h_0$,
	\begin{align*}
	&\ \|u - R_h\t u\|_{L^2(\Gat)} + h \|u - R_h\t u\|_{H^1(\Gat)} \leq c h^{k+1} \|u\|_{H^{k+1}(\Gat)} \\
%	+ \|\zeta\|_{L^2(\Gat)} \big) 	, \\
	&\ \|(\mat_h)^{(l)} (u - R_h\t u)\|_{L^2(\Gat)} + h \|(\mat_h)^{(l)} (u - R_h\t u)\|_{H^1(\Gat)} \\
	&\ \qquad\qquad  \leq c h^{k+1} \sum_{j=0}^l \| (\mat)^{(j)} u\|_{H^{k+1}(\Gat)} ,
%	\big( \|u\|_{H^{k+1}(\Gat)} + \|\mat u\|_{H^{k+1}(\Gat)} + \|\zeta\|_{L^2(\Gat)} + \|\mat \zeta\|_{L^2(\Gat)} \big) , 
	\end{align*}
	where the constant $c>0$ is independent of $h$ and $t$, but depends on $\GT$.
\end{lemma}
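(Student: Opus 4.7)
The plan is to proceed in two main stages: first the stationary bound (the $l=0$ estimate), and then the time-differentiated bounds by induction on $l$. For the stationary $H^1$ estimate, I would decompose
\[
u - R_h(t) u = (u - I_h u) + (I_h u - R_h(t) u).
\]
The first piece is $O(h^{k+1})$ in $L^2$ and $O(h^k)$ in $H^1$ by Lemma~\ref{lemma:interpolation error estimate}. For the elliptic projection piece, let $\widetilde\eta_h := \widetilde I_h u - \widetilde R_h(t) u \in S_h(t)$. Using coercivity of $a_h^*$ and the defining identity \eqref{eq:definition Ritz map},
\[
\|\widetilde\eta_h\|_{H^1(\Ga_h\t)}^2 \leq a_h^*(\widetilde\eta_h,\widetilde\eta_h)= a_h^*(\widetilde I_h u,\widetilde\eta_h) - a^*(u,\widetilde\eta_h^\ell),
\]
and the right-hand side is rewritten as the sum of $a_h^*(\widetilde I_h u - (R_h u)^{-\ell},\widetilde\eta_h)$ (controlled by interpolation) and the purely geometric mismatch $a_h^*((R_h u)^{-\ell},\widetilde\eta_h) - a^*(R_h u,\widetilde\eta_h^\ell)$, which is bounded by $O(h^{k+1})\|u\|_{H^{k+1}(\Ga\t)}\|\widetilde\eta_h\|_{H^1(\Ga_h\t)}$ via the high-order geometric perturbation estimates from \cite{highorderESFEM,Demlow}. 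The $L^2$ estimate follows by Aubin--Nitsche duality: solve $-\Delta_{\Ga\t} z + z = u - R_h u$ on the smooth surface, test against the error, apply Galerkin orthogonality plus the same geometric perturbation bounds, and absorb $\|z\|_{H^2(\Ga\t)} \leq c\|u-R_h u\|_{L^2(\Ga\t)}$.

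For the material-derivative estimates I would differentiate the identity \eqref{eq:definition Ritz map} in time along the discrete material velocity $v_h$ of \eqref{eq:definition v_h}. Applying the transport formulas \eqref{eq:transport formula - T2} and \eqref{eq:transport formula - T3} to both sides of \eqref{eq:definition Ritz map} (which become transport formulas for $a^*$ and $a_h^*$ after accounting for zero-order parts) gives an identity of the form
\[
a_h^*\bigl(\mat_h \widetilde R_h u - \widetilde R_h (\mat u),\varphi_h\bigr) = \mathcal{G}_h(u,\mat u;\varphi_h),
\]
where $\mathcal{G}_h$ collects geometric consistency terms involving the discrepancy between $v$ and $v_h$ and between $a^*$ and $a_h^*$. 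By Lemma~\ref{lemma:velocity error estimate} and Lemma~\ref{lemma:VhBhBound}, together with the geometric perturbation bounds of \cite{highorderESFEM}, one has $|\mathcal{G}_h(u,\mat u;\varphi_h)| \leq c h^{k+1}(\|u\|_{H^{k+1}} + \|\mat u\|_{H^{k+1}})\|\varphi_h\|_{H^1(\Ga_h\t)}$. Coercivity of $a_h^*$, combined with the stationary estimate applied to $\mat u$, then yields the $l=1$ bound. For general $l$ one iterates: differentiating the already-differentiated identity $l$ more times and using Lemma~\ref{lemma:velocity error estimate} for $(\mat_h)^{(j)}(v-v_h)$ up to order $l$, one obtains an identity whose right-hand side is controlled by $h^{k+1}\sum_{j=0}^l\|(\mat)^{(j)}u\|_{H^{k+1}(\Ga\t)}$. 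The $L^2$ bound at each level is then recovered by an Aubin--Nitsche argument applied to $(\mat_h)^{(l)}(u - R_h u)$ with an appropriate dual problem on $\Ga\t$.

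The main obstacle will be the careful bookkeeping of the geometric consistency terms under repeated time differentiation. Each differentiation introduces terms containing derivatives of the surface metric factor $\mu_h$ (the Jacobian of the lift), of the normal $\nu_h$, of $V_h$, and of the projector $(I-\nu_h\nu_h^T)$, together with interchange commutators from \eqref{eq:interchange formulas}; all of these must be shown to be $O(h^{k+1})$ uniformly in time. Here the uniform-in-time high-order geometric bounds of \cite{highorderESFEM} are essential, as is uniform control of $(\mat_h)^{(j)} V_h$ via Lemma~\ref{lemma:VhBhBound}. The duality step for $(\mat_h)^{(l)}(u - R_h u)$ also requires a careful dual problem setup, because this quantity is not itself a Ritz projection; one solves an auxiliary elliptic problem on $\Ga\t$ for each fixed $t$ and reuses the already-proven bounds on lower-order material derivatives of the Ritz error as forcing. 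Once these geometric derivatives are bounded uniformly in $t$, combining them with Lemma~\ref{lemma:interpolation error estimate} and an induction on $l$ completes the proof.
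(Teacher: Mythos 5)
There is no in-paper proof to compare against: Lemma~\ref{lemma:Ritz map error} is \emph{recalled} from \cite[Theorems~6.3 and 6.4]{highorderESFEM} and the present paper cites it without proof. Your outline reproduces, in substance, the standard argument of that reference: a C\'ea-type energy estimate using coercivity of $a_h^*$ together with high-order geometric perturbation bounds for the $H^1$ estimate, Aubin--Nitsche duality for the $L^2$ estimate, and, for the material-derivative bounds, time differentiation of the defining identity \eqref{eq:definition Ritz map} via the transport formulae with an induction on $l$; your identification of the bookkeeping of $\mu_h$, $\nu_h$, $V_h$ and the interchange commutators as the main burden is exactly where that proof spends its effort. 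One concrete algebraic slip should be repaired: in the splitting of $a_h^*(\widetilde I_h u,\widetilde\eta_h)-a^*(u,\widetilde\eta_h^\ell)$ the intermediate function must be the \emph{interpolant}, not the Ritz map. As written, the first piece $a_h^*(\widetilde I_h u-(R_h u)^{-\ell},\widetilde\eta_h)=a_h^*(\widetilde\eta_h,\widetilde\eta_h)$ is precisely the quantity being bounded (so the step is circular), and the two pieces do not sum to the left-hand side --- they differ by $a^*(u-R_h u,\widetilde\eta_h^\ell)$. The correct decomposition is $\bigl(a_h^*(\widetilde I_h u,\widetilde\eta_h)-a^*(I_h u,\widetilde\eta_h^\ell)\bigr)+a^*(I_h u-u,\widetilde\eta_h^\ell)$, i.e.\ a geometric mismatch evaluated on a finite element function (to which the perturbation estimates of \cite{highorderESFEM} apply) plus an interpolation error controlled by Lemma~\ref{lemma:interpolation error estimate}. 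With that repaired, the stationary argument is sound, and the rest of the programme matches the cited proof.
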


%[cb: Ritz map defined above, error estimates here] 
%
%For the Ritz map we have the following error estimate from [Lubich and Mansour, Wave equation on evolving surfaces, Theorem $8.2$]
%\begin{theorem}
%	Let $z \in \GT \to \R$ with $z(\cdot,t) \in H^{k+1}(\Ga\t)$ and $\zeta(\cdot,t) \in L^2(\Ga\t)$ for $ 0 \leq t\leq T$. The error in the Ritz map satisfies the bounds
%	%
%	\begin{equation*}
%	\begin{aligned}
%	\|z\t - (\widetilde{R}_h z) \t \|_{L^2(\Ga\t)} +& h \|\nb_\Ga \big(z\t - (\widetilde{R}_h z) \t\big) \|_{L^2(\Ga\t)} \\
%	&\leq c h^{k+1} \big( \|z\t \|_{H^{k+1}(\Ga\t)} + \|\zeta\t \|_{L^2(\Ga\t)}\big),
%	\end{aligned}
%	\end{equation*}
%	%
%	for $0 \leq t \leq T$ and $h \leq h_0$ with $h_0$ sufficiently small and the constant $c$ independent of $h$ and $t$.
%\end{theorem}
%The errors in the material derivative of the Ritz map is bounded according to [Lubich and Mansour, Wave equation on evolving surfaces, Theorem $8.3$] with $\zeta = \mat z$ [cb: With $\zeta = \mat z$ theorem still correct?] 
%\begin{theorem}
%	The error in the material derivatives of the Ritz map are bounded for $l \geq 1, 0 \leq t \leq T$ and $h \leq h_0$ for $h_0$ sufficiently small, by
%	%
%	\begin{equation*}
%	\begin{aligned}
%	\| \partial_h^{(l)} (z -  R z ) \t\|_{L^2(\Ga \t)} + h \| \nb_\Ga \big(\partial_h^{(l)}(z -  R z ) \t \big)\| \\
%	\leq c_l h^{k+1} \sum_{i=0}^{l} \Big( \| \partial^{(i)} z \t \|_{H^{k+1}(\Ga \t)} + \| \partial^{(i+1)} z \t \|_{L^2(\Ga \t)} \Big),
%	\end{aligned}
%	\end{equation*}
%	with $c_l$ independent of $h$ and $t$.
%	%
%\end{theorem}

\subsubsection{Geometric approximation errors}
\label{section:geometric approx errors}

The time dependent bilinear forms $m,r$ and their discrete counterparts $m_h,r_h$, from \eqref{eq:bilinear forms} and \eqref{eq:discrete bilinear forms}, respectively, satisfy the following high-order \emph{geometric approximation estimates}, see \cite[Lemma 5.6]{highorderESFEM}.
%[cb: we do not directly use the formulas for $a,a_h$ and $b,b_h$ in the paper kb: We should probably do this.] 
\begin{lemma}
\label{lemma:GeometricBilinear}
	Let $z_h, \vphi_h \in S_h\t$ arbitrary with lifts $z_h^\ell, \vphi_h^\ell \in S_h^\ell\t$. Then, for all $h \leq h_0$ with $h_0$ sufficiently small, the following estimates hold 
	\begin{align*}
	%\label{eq: GeometricBilinearM}
	|m(z_h^\ell,\vphi_h^\ell) - m_h(z_h,\vphi_h)| &\leq ch^{k+1} \|z_h^\ell\|_{L^2(\Ga\t)}\|\vphi_h^\ell\|_{L^2(\Ga\t)}, \\
	%\label{eq: GeometricBilinearA}
%	|a(z_h^\ell,\vphi_h^\ell) - a_h(z_h,\vphi_h)| &\leq ch^{k+1} \|\nbg z_h^\ell\|_{L^2(\Ga\t)}\|\nbg\vphi_h^\ell\|_{L^2(\Ga\t)}, \\
%	\end{align*}
%	and
%	%
%	\begin{align*}
	%\label{eq: GeometricBilinearG}
	|r(v_h;z_h^\ell,\vphi_h^\ell) - r_h(V_h;z_h,\vphi_h)| &\leq ch^{k+1} \|z_h^\ell\|_{L^2(\Ga\t)}\|\vphi_h^\ell\|_{L^2(\Ga\t)}, 
%	\\
	%\label{eq: GeometricBilinearB}
%	|b(v_h;z_h^\ell,\vphi_h^\ell) - b_h(V_h;z_h,\vphi_h)| &\leq ch^{k+1} \|\nbg z_h^\ell\|_{L^2(\Ga\t)}\|\nbg\vphi_h^\ell\|_{L^2(\Ga\t)},
	\end{align*}
	where the constant $c > 0$ is independent of $h$ and $t$, but depends on $\GT$.
\end{lemma}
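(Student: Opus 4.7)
The plan is to reduce both estimates to computations on a common domain via the lift and then isolate a single geometric factor of size $O(h^{k+1})$. First I would pull the continuous integrals back to the discrete surface: for any $z_h,\vphi_h \in S_h\t$ with lifts $z_h^\ell,\vphi_h^\ell$, the change of variables associated to the lift map yields
\begin{equation*}
m(z_h^\ell,\vphi_h^\ell) \;=\; \int_{\Gat}\! z_h^\ell \,\vphi_h^\ell \;=\; \int_{\Ga_h\t}\! z_h\,\vphi_h\,\delta_h,
\end{equation*}
where $\delta_h = \delta_h(\cdot,t)$ is the ratio of the induced surface measures on $\Ga\t$ and $\Ga_h\t$. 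The key geometric ingredient, established for $k$-order interpolating surfaces in \cite{Demlow,highorderESFEM} (and for $k=1$ in \cite{DziukElliott_L2}), is the uniform bound $\|1-\delta_h\|_{L^\infty(\Ga_h\t)} \le c h^{k+1}$. Writing
\begin{equation*}
m(z_h^\ell,\vphi_h^\ell) - m_h(z_h,\vphi_h) \;=\; \int_{\Ga_h\t}\! (\delta_h - 1)\, z_h\,\vphi_h,
\end{equation*}
applying Cauchy--Schwarz in $L^2(\Ga_h\t)$ and invoking the norm equivalence~\eqref{eq:norm equivalence} between discrete and continuous $L^2$ norms gives the first bound.

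For the second estimate the analogous change of variables gives
\begin{equation*}
r(v_h;z_h^\ell,\vphi_h^\ell) - r_h(V_h;z_h,\vphi_h) \;=\; \int_{\Ga_h\t}\!\!\Big( (\nbg \cdot v_h)^{-\ell}\,\delta_h \;-\; \nbgh \cdot V_h \Big)\, z_h\,\vphi_h.
\end{equation*}
After adding and subtracting $(\nbg\cdot v_h)^{-\ell}$, the integrand splits into $(\delta_h - 1)\,(\nbg\cdot v_h)^{-\ell}\,z_h\vphi_h$, which is controlled by the same $O(h^{k+1})$ factor together with the $L^\infty$ bound on $\nbg\cdot v_h$ from Lemma~\ref{lemma:VhBhBound}, and a second term requiring control of $(\nbg\cdot v_h)^{-\ell} - \nbgh\cdot V_h$ in $L^\infty(\Ga_h\t)$.

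The main obstacle is this last piece: estimating the difference between the continuous surface divergence of the auxiliary velocity $v_h$ (defined on $\Ga\t$) and the discrete surface divergence of $V_h$ (defined on $\Ga_h\t$). My plan is to express both in local coordinates on a parametric element, so that the difference decomposes into two contributions, namely the approximation of $v$ by its nodal interpolant $V_h$ and the approximation of the geometric data of $\Ga\t$ by that of $\Ga_h\t$ (normals, projection onto the tangent plane, metric). Both contributions are $O(h^{k+1})$ in $L^\infty$: the first by Lemma~\ref{lemma:interpolation error estimate} applied to the velocity, combined with Lemma~\ref{lemma:velocity error estimate} to pass between $v$ and $v_h$, and the second by the standard high-order geometric estimates on normals and Weingarten map used in \cite{Demlow,highorderESFEM}. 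Plugging these back, Cauchy--Schwarz and norm equivalence~\eqref{eq:norm equivalence} yield the claimed bound. The scheme is identical in spirit to the proof of the mass estimate, but book-keeping the three different material derivatives and the two notions of surface divergence is the delicate part that must be tracked with care.
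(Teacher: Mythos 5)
Your proof of the first estimate is correct and is the standard argument (the paper does not reprove this lemma but cites \cite[Lemma~5.6]{highorderESFEM}; the change of variables together with $\|1-\mu_h\|_{L^\infty(\Ga_h\t)}\le ch^{k+1}$, i.e.\ \eqref{eq: GeometricSurfaceMeasure_3}, is exactly what is used there and reused throughout Section~\ref{section:geometric approx errors}).

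For the second estimate there is a genuine gap in how you propose to control $(\nbg\cdot v_h)^{-\ell}-\nbgh\cdot V_h$ in $L^\infty$. The tools you invoke lose a power of $h$ at the level of first derivatives: Lemma~\ref{lemma:interpolation error estimate} only gives $\|\nbg(v-I_hv)\|_{L^\infty}\le ch^{k}$, and Lemma~\ref{lemma:velocity error estimate} only gives $\|\nbg(v-v_h)\|_{L^\infty}\le c h^{k}$ --- the $O(h^{k+1})$ bounds hold for the velocities themselves, not for their gradients. Hence any splitting that passes through $\nbg\cdot v$ delivers only $O(h^{k})$ for the divergence difference, one order short of the claim. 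The $O(h^{k+1})$ bound for this \emph{particular} combination is nevertheless true, but it rests on a cancellation your decomposition does not see: $v_h$ is by definition \eqref{eq:definition v_h} the velocity of the lifted points, driven by $V_h$, so $v$ and its interpolation error never enter. Differentiating the identity $m(z_h^\ell,\vphi_h^\ell)=m_h(z_h,\vphi_h\mu_h)$ in time with the transport formulas \eqref{eq:transport formula - T2 and T3} makes all material-derivative terms cancel and leaves exactly
\begin{equation*}
	r(v_h;z_h^\ell,\vphi_h^\ell)-r_h(V_h;z_h,\vphi_h\mu_h)=m_h\big(z_h,(\mat_h\mu_h)\vphi_h\big),
\end{equation*}
which is \eqref{eq:gdifference} in the proof of Lemma~\ref{lemma:GeometricNew}; pointwise it says $(\nbg\cdot v_h)^{-\ell}-\nbgh\cdot V_h=\mu_h^{-1}\,\mat_h\mu_h$. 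The right-hand side is $O(h^{k+1})$ by \eqref{eq: GeometricSurfaceMeasure_1}, and the remaining discrepancy $r_h(V_h;z_h,\vphi_h(\mu_h-1))$ is bounded using \eqref{eq: GeometricSurfaceMeasure_3} and the $L^\infty$ bound on $\nbgh\cdot V_h$ from Lemma~\ref{lemma:VhBhBound}. Replace your local-coordinate comparison by this transport-formula identity (or, equivalently, prove $\|\mat_h\mu_h\|_{L^\infty(\Ga_h\t)}\le ch^{k+1}$ directly) and the argument closes.
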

Similar results hold for the errors in the bilinear form $a$, cf.~\cite[Lemma 5.6]{highorderESFEM}, but these are not used herein. The previous estimates also hold for any functions in $L^2(\Ga_h\t)$. Therefore, the proof of the previous lemma implies 
\begin{equation}
\label{eq:GeometricBilinearPsi}
\begin{aligned}
	&|m(f(z_h^\ell,\nb_\Ga z_h^\ell),\vphi_h^\ell) - m_h(f(z_h,\nb_{\Ga_h} z_h),\vphi_h)| \\
	\leq &ch^{k+1} \|f(z_h^\ell,\nb_\Ga z_h^\ell)\|_{L^2(\Ga\t)}\|\vphi_h^\ell\|_{L^2(\Ga\t)},\\
	&|m(\mat_h f(z_h^\ell,\nb_\Ga z_h^\ell), \vphi_h^\ell) - m_h(\mat_h f(z_h,\nb_{\Ga_h} z_h), \vphi_h)|\\ \leq &ch^{k+1} \|\mat_h f(z_h^\ell,\nb_\Ga z_h^\ell)\|_{L^2(\Ga\t)} \|\vphi_h^\ell\|_{L^2(\Ga\t)},
\end{aligned}
\end{equation}
respectively for $g$. 
Let $\mu_h$ denote the quotient of the measures on $\Ga\t$ and $\Ga_h\t$. In \cite[Lemma~5.2]{highorderESFEM} it is shown that the following estimates hold:
\begin{align}
	\label{eq: GeometricSurfaceMeasure_3}
	\|1 - \mu_h\|_{L^{\infty}(\Ga_h\t)} &\leq c h^{k+1}, \\
    \label{eq: GeometricSurfaceMeasure_1}
    \|\mat_h \mu_h\|_{L^\infty(\Ga_h\t)} &\leq c h^{k+1},\\
    \label{eq: GeometricSurfaceMeasure_2}
    \|(\mat_h)^{(2)} \mu_h\|_{L^\infty(\Ga_h\t)} &\leq c h^{k+1}.
\end{align}
Below we present and prove a new geometric approximation estimate which relates time derivatives of $r$ and $r_h$. 
\begin{lemma}
\label{lemma:GeometricNew}
	Let $z_h, \vphi_h \in S_h\t$ be arbitrary with $\mat_h z_h, \mat_h \vphi_h \in S_h\t$, with their corresponding lifts in $S_h^\ell\t$. Then, for all $h \leq h_0$ with $h_0$ sufficiently small, the following estimate holds
	\begin{equation*}
	\begin{aligned}
	&\ \Big| m((\nb_{\Ga \t} \cdot v_h)^2 \,z_h^\ell, \vphi_h^\ell) + m(\mat_h (\nb_{\Ga \t} \cdot v_h) \,z_h^\ell, \vphi_h^\ell) \\
	&\ - m_h((\nb_{\Ga_h \t} \cdot V_h)^2 \,z_h, \vphi_h) - m_h(\mat_h (\nb_{\Ga_h \t} \cdot V_h)\, z_h, \vphi_h ) \Big| \\
	\leq &\ c h^{k+1}\, \Big( \|z_h^\ell\|_{L^2(\Ga\t)}\,\|\vphi_h^\ell\|_{L^2(\Ga\t)} + \|z_h^\ell\|_{L^2(\Ga\t)} \,\|\mat_h \vphi_h^\ell\|_{L^2(\Ga\t)} \\
	&\ \phantom{c h^{k+1}\, \Big( } + \|\mat_h z_h^\ell\|_{L^2(\Ga\t)} \,\|\vphi_h^\ell\|_{L^2(\Ga\t)} \Big),
	\end{aligned}
	%\label{eq: GeometricBilinearNew}
	\end{equation*}
	where the constant $c > 0$ is independent of $h$ and $t$, but depends on the surface velocity $v$.
\end{lemma}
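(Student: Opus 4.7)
The plan is to exploit the transport formulae \eqref{eq:transport formula - T2} and \eqref{eq:transport formula - T3} to recognise the two continuous terms on the left-hand side (and the two discrete ones) as the time derivative of $r(v_h;z_h^\ell,\vphi_h^\ell)$, respectively $r_h(V_h;z_h,\vphi_h)$, modulo two residual terms. Concretely, applying \eqref{eq:transport formula - T2} with $u=(\nbg\cdot v_h)z_h^\ell$ and $\vphi=\vphi_h^\ell$, expanding $\mat_h[(\nbg\cdot v_h)z_h^\ell]$ by the product rule and identifying $m((\nbg\cdot v_h)\,\cdot,\cdot)=r(v_h;\cdot,\cdot)$ throughout, yields
\begin{equation*}
\begin{aligned}
&\ m((\nbg\cdot v_h)^2 z_h^\ell,\vphi_h^\ell) + m(\mat_h(\nbg\cdot v_h)z_h^\ell,\vphi_h^\ell) \\
=&\ \diff r(v_h;z_h^\ell,\vphi_h^\ell) - r(v_h;\mat_h z_h^\ell,\vphi_h^\ell) - r(v_h; z_h^\ell,\mat_h\vphi_h^\ell),
\end{aligned}
\end{equation*}
and the entirely analogous identity on $\Ga_h\t$ for the two discrete terms holds via \eqref{eq:transport formula - T3}.

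Subtracting these two identities, the two pairs of residual terms become $r$--$r_h$ type geometric discrepancies, each with one scalar argument replaced by a material derivative, to which Lemma~\ref{lemma:GeometricBilinear} applies verbatim, producing exactly the cross-products $\|\mat_h z_h^\ell\|_{L^2}\|\vphi_h^\ell\|_{L^2}$ and $\|z_h^\ell\|_{L^2}\|\mat_h\vphi_h^\ell\|_{L^2}$ with prefactor $ch^{k+1}$. What remains is to estimate
\begin{equation*}
\diff\bigl[r(v_h;z_h^\ell,\vphi_h^\ell)-r_h(V_h;z_h,\vphi_h)\bigr].
\end{equation*}
I would rewrite the bracketed difference as $\int_{\Ga_h\t} z_h\vphi_h F$, with $F:=[\nbg\cdot v_h]^{-\ell}\mu_h-\nbgh\cdot V_h$ and $\mu_h$ the quotient of measures on $\Ga\t$ and $\Ga_h\t$; the proof of Lemma~\ref{lemma:GeometricBilinear} in \cite{highorderESFEM} in fact furnishes the pointwise bound $\|F\|_{L^\infty(\Ga_h\t)}\leq c h^{k+1}$.

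Expanding the time derivative of $\int_{\Ga_h\t} z_h\vphi_h F$ with \eqref{eq:transport formula - T3} produces four integrals. Three of them carry $F$ unchanged and multiply $\mat_h z_h\cdot\vphi_h$, $z_h\cdot\mat_h\vphi_h$, or $z_h\vphi_h(\nbgh\cdot V_h)$; each is bounded by $\|F\|_{L^\infty}$ together with the $L^\infty$ bound on $\nbgh\cdot V_h$ from Lemma~\ref{lemma:VhBhBound}, matching one of the three template norm products of the claim. The fourth integral, $\int_{\Ga_h\t} z_h\vphi_h\,\mat_h F$, is the principal obstacle and demands the pointwise estimate $\|\mat_h F\|_{L^\infty(\Ga_h\t)}\leq c h^{k+1}$. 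I would establish it by differentiating the definition of $F$ with the product rule: the factor $\mat_h\mu_h$ is controlled by \eqref{eq: GeometricSurfaceMeasure_1}, while $\mat_h[\nbg\cdot v_h]^{-\ell}\mu_h-\mat_h(\nbgh\cdot V_h)$ is the material-derivative analogue of the spatial geometric bound behind Lemma~\ref{lemma:GeometricBilinear}, proved along the same lines but now additionally invoking the $L^\infty$ estimate on $\mat_h(v-v_h)$ and its surface gradient from Lemma~\ref{lemma:velocity error estimate}, together with the interchange formulas \eqref{eq:interchange formulas} to commute $\mat_h$ past $\nbg$. Combining all the pieces then yields the assertion of Lemma~\ref{lemma:GeometricNew}.
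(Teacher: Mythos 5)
Your reduction is essentially the paper's: the paper likewise uses the transport formulae \eqref{eq:transport formula - T2 and T3} to identify the four terms with the time derivative of the $r$--$r_h$ discrepancy plus residual pairs in which one argument is replaced by a material derivative, and those residuals are handled exactly as you propose (the paper via $m_h(\mat_h z_h,(\mat_h\mu_h)\vphi_h)$, you via Lemma~\ref{lemma:GeometricBilinear}; same bound). The only structural difference is how $\diff\bigl[r(v_h;z_h^\ell,\vphi_h^\ell)-r_h(V_h;z_h,\vphi_h)\bigr]$ is treated: the paper keeps the $\mu_h$-weighted test function, differentiates the identity $r(v_h;z_h^\ell,\vphi_h^\ell)-r_h(V_h;z_h,\vphi_h\mu_h)=m_h(z_h,(\mat_h\mu_h)\vphi_h)$, and strips the weight at the end with $\|1-\mu_h\|_{L^\infty}\leq ch^{k+1}$, whereas you package the discrepancy into the density $F=[\nbg\cdot v_h]^{-\ell}\mu_h-\nbgh\cdot V_h$ and differentiate that.

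The one step I would not accept as written is your justification of $\|\mat_h F\|_{L^\infty(\Ga_h\t)}\leq ch^{k+1}$. Differentiating the \emph{definition} of $F$ and comparing $\mat_h(\nbg\cdot v_h)$ with $\mat_h(\nbgh\cdot V_h)$ through the exact velocity $v$ cannot give the claimed order: after the interchange formulas the comparison involves surface gradients of $\mat_h(v-v_h)$, and Lemma~\ref{lemma:velocity error estimate} only provides $\|\nbg(\mat_h)^{(l)}(v-v_h)\|_{L^\infty}\leq c_l h^{k}$ — one power of $h$ short. The way out (and the pointwise content of the paper's identity \eqref{eq:gdifference}) is that $F$ satisfies the purely geometric identity $F=\mat_h\mu_h+(\mu_h-1)\,\nbgh\cdot V_h$, whence $\mat_h F=(\mat_h)^{(2)}\mu_h+(\mat_h\mu_h)(\nbgh\cdot V_h)+(\mu_h-1)\,\mat_h(\nbgh\cdot V_h)$ is $O(h^{k+1})$ in $L^\infty$ by \eqref{eq: GeometricSurfaceMeasure_1}--\eqref{eq: GeometricSurfaceMeasure_3} together with the velocity bounds of Lemma~\ref{lemma:VhBhBound}; no comparison with $v$ is needed. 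With that substitution your argument closes and is equivalent to the paper's proof.
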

\begin{proof}
	Although, this lemma was first proved in \cite[Lemma~3.1.8]{Beschle_thesis}, due to its importance we present it here in full detail.
	
	We start by differentiating the integral transformation 
	\begin{equation*}
		m(z_h^\ell,\vphi_h^\ell) = m_h(z_h,\vphi_h \mu_h),
	\end{equation*}
	with respect to time using the transport formulae \eqref{eq:transport formula - T2 and T3}, to obtain
	\begin{align*}
		\diff m(z_h^\ell,\vphi_h^\ell) = &\ m(\mat_h z_h^\ell, \vphi_h^\ell) + m(z_h^\ell, \mat_h \vphi_h^\ell) + r(v_h;z_h^\ell,\vphi_h^\ell) \\
		=\diff m_h(z_h,\vphi_h \mu_h) = &\ m_h(\mat_h z_h, \vphi_h\mu_h) + m_h(z_h, (\mat_h \vphi_h)\mu_h) \\
		&\ + r_h(V_h;z_h,\vphi_h\mu_h) + m_h(z_h,(\mat_h \mu_h)\vphi_h).
	\end{align*}
	Using $\mat_h (z_h^\ell) = (\mat_h z_h)^\ell$, see \cite[Lemma~4.1]{DziukElliott_L2}, we obtain
	\begin{equation}
	\begin{aligned}
	r(v_h;z_h^\ell,\vphi_h^\ell) - r_h(V_h;z_h,\vphi_h\mu_h) = &\ m_h(\mat_h z_h, \vphi_h\mu_h) - m((\mat_h z_h)^\ell, \vphi_h^\ell)\\
	&\ +m_h(z_h, (\mat_h \vphi_h)\mu_h) - m(z_h^\ell, (\mat_h \vphi_h)^\ell)\\
	&\ +m_h(z_h,(\mat_h \mu_h)\vphi_h) \\
	=  &\ m_h(z_h,(\mat_h \mu_h)\vphi_h).
	\end{aligned}																				
	\label{eq:gdifference}
	\end{equation}
	In particular, for $\mat_h z_h$ in the role of $z_h$, and with the use of the geometric estimate for the surface measure $\|\mat_h \mu_h\|_{L^\infty} \leq c h^{k+1}$ \eqref{eq: GeometricSurfaceMeasure_1} we obtain the estimate
	\begin{align*}
		r(v_h;\mat_h z_h^\ell,\vphi_h^\ell) - r_h(V_h;\mat_h z_h,\vphi_h\mu_h) 
		= &\ m_h(\mat_hz_h,(\mat_h\mu_h) \vphi_h) \\
%		\leq&\  c \|\mat_h \mu_h\|_{L^{\infty}(\Ga_h\t)} \|\mat_h z_h\|_{L^2(\Ga_h\t)} \|\vphi_h\|_{L^2(\Ga_h\t)}\\
		\leq&\ c h^{k+1} \|\mat_h z_h\|_{L^2(\Ga_h\t)} \|\vphi_h\|_{L^2(\Ga_h\t)},
	\end{align*}
	and with $\mat_h \vphi_h$ in the role of $\vphi_h$,
	\begin{align*}
		r(v_h;z_h^\ell,\mat_h \vphi_h^\ell) - r_h(V_h;z_h,(\mat_h \vphi_h)\mu_h) 
		= &\ m_h(z_h,\mat_h \mu_h\, \mat_h \vphi_h) \\
%		\leq&\ c\|\mat_h \mu_h\|_{L^{\infty}(\Ga_h\t)} \|z_h\|_{L^2(\Ga_h\t)} \|\mat_h \vphi_h\|_{L^2(\Ga_h\t)}\\
		\leq &\ c h^{k+1} \|z_h\|_{L^2(\Ga_h\t)} \|\mat_h \vphi_h\|_{L^2(\Ga_h\t)}.
	\end{align*} 
	Differentiating equation $\eqref{eq:gdifference}$ with respect to time, using \eqref{eq:transport formula - T2 and T3}, yields
	\begin{align*}
		\diff m((\nb_\Ga \cdot v_h) z_h^\ell, \vphi_h^\ell) - \diff m_h((\nbgh \cdot V_h) z_h, \vphi_h\, \mu_h)= \diff m_h(z_h,(\mat_h \mu_h)\, \vphi_h).
	\end{align*}
	Computing the derivatives on the left-hand side then leads to
	\begin{align*}
		&\ m((\nb_\Ga \cdot v_h)^2 z_h^\ell, \vphi_h^\ell) - m_h((\nbgh \cdot V_h)^2 z_h, \vphi_h \,\mu_h) \\
		&\ + m(\mat_h (\nb_\Ga \cdot v_h) z_h^\ell, \vphi_h^\ell) - m_h(\mat_h (\nbgh \cdot V_h) z_h, \vphi_h \,\mu_h)  \\
		= &\  r_h(V_h; (\mat_h z_h, \vphi_h) \,\mu_h) - r(v_h;\mat_h z_h^\ell, \vphi_h^\ell) \\
		&\ + r_h(V_h; z_h, \mat_h \vphi_h \,\mu_h) - r(v_h; z_h^\ell, \mat_h \vphi_h^\ell)  \\	
		&\ + \diff m_h(z_h,(\mat_h \mu_h) \,\vphi_h) + r_h(V_h; z_h, \vphi_h(\mat_h \mu_h)).
	\end{align*}
	The pairs in the first two lines on the right-hand side are already estimated above, while the last term is estimated by the geometric estimate \\
	$\|\mat_h \mu_h\|_{L^\infty} \leq c h^{k+1}$ \eqref{eq: GeometricSurfaceMeasure_1}. To estimate the remaining derivative term, we first compute the time derivative by \eqref{eq:transport formula - T3} and then estimate each term to obtain
	\begin{align*}
	\diff m_h(z_h,(\mat_h \mu_h)\,\vphi_h) =&\ m_h(\mat_h z_h,(\mat_h \mu_h)\, \vphi_h) + m_h(z_h,(\mat_h \mat_h \mu_h) \,\vphi_h) \\
	&\ + m_h(z_h,(\mat_h \mu_h) \,\mat_h \vphi_h) + r_h(V_h; z_h,(\mat_h \mu_h)\, \vphi_h) \\
%	\leq&\  c\, \|(\mat_h \mu_h)\|_{L^{\infty}(\Ga_h\t)} \|\mat_h z_h\|_{L^2(\Ga_h\t)} \|\vphi_h\|_{L^2(\Ga_h\t)} \\
%	&\ + c\, \|\mat_h \mat_h \mu_h\|_{L^{\infty}(\Ga_h\t)} \|z_h\|_{L^2(\Ga\t)} \|\vphi_h\|_{L^2(\Ga\t)} \\
%	&\ + c\, \|(\mat_h \mu_h)\|_{L^{\infty}(\Ga_h\t)} \|z_h\|_{L^2(\Ga_h\t)} \|\mat_h \vphi_h\|_{L^2(\Ga_h\t)} \\
%	&\ + c \|(\mat_h \mu_h)\|_{L^{\infty}(\Ga_h\t)} \|z_h\|_{L^2(\Ga_h\t)} \|\vphi_h\|_{L^2(\Ga_h\t)} \\
	\leq &\ c h^{k+1} \Big(\|z_h\|_{L^2(\Ga_h\t)}\|\vphi_h\|_{L^2(\Ga_h\t)} \\
	&\ \phantom{c h^{k+1} \Big(} + \|z_h\|_{L^2(\Ga_h\t)} \|\mat_h \vphi_h\|_{L^2(\Ga_h\t)} \\
	&\ \phantom{c h^{k+1} \Big(} + \|\mat_h z_h\|_{L^2(\Ga_h\t)} \|\vphi_h\|_{L^2(\Ga_h\t)} \Big),
	\end{align*}
	using the geometric error estimate $\|(\mat_h)^{(2)} \mu_h\|_{L^\infty} \leq c h^{k+1}$  \eqref{eq: GeometricSurfaceMeasure_2}.
	
	Altogether, by triangle inequalities and by combining the above estimates, we obtain
	\begin{equation*}
	\begin{aligned}
	&\ |m((\nb_\Ga \cdot v_h)^2 z_h^\ell, \vphi_h^\ell) - m_h((\nbgh \cdot V_h)^2 z_h, \vphi_h) \\
	&\  + m(\mat_h (\nb_\Ga \cdot v_h) z_h^\ell, \vphi_h^\ell) - m_h(\mat_h (\nbgh \cdot V_h) z_h, \vphi_h )| \\
	= &\ |m((\nb_\Ga \cdot v_h)^2 z_h^\ell, \vphi_h^\ell) - m_h((\nbgh \cdot V_h)^2 z_h, \vphi_h\mu_h) \\
	&\ + m_h((\nbgh \cdot V_h)^2 z_h, \vphi_h\mu_h) - m_h((\nbgh \cdot V_h)^2 z_h, \vphi_h) \\
	&\  + m(\mat_h (\nb_\Ga \cdot v_h) z_h^\ell, \vphi_h^\ell) 	- m_h(\mat_h (\nbgh \cdot V_h) z_h, \vphi_h\mu_h) \\
	&\  +m_h(\mat_h (\nbgh \cdot V_h) z_h, \vphi_h\mu_h) -  m_h(\mat_h (\nbgh \cdot V_h) z_h, \vphi_h )|\\
	\leq &\ |m((\nb_\Ga \cdot v_h)^2 z_h^\ell, \vphi_h^\ell) - m_h((\nbgh \cdot V_h)^2 z_h, \vphi_h\mu_h) \\
	&\  + m(\mat_h (\nb_\Ga \cdot v_h) z_h^\ell, \vphi_h^\ell) 	- m_h(\mat_h (\nbgh \cdot V_h) z_h, \vphi_h\mu_h)| \\
	&\ + |m_h((\nbgh \cdot V_h)^2 z_h, \vphi_h(\mu_h - 1))|\\
	&\  +|m_h(\mat_h (\nbgh \cdot V_h) z_h, \vphi_h(\mu_h - 1))| \\
	\leq &\  c h^{k+1} \Big(\|z_h\|_{L^2(\Ga_h\t)}\|\vphi_h\|_{L^2(\Ga_h\t)} + \|z_h\|_{L^2(\Ga_h\t)} \|\mat_h \vphi_h\|_{L^2(\Ga_h\t)} \\
	&\ + \|\mat_h z_h\|_{L^2(\Ga_h\t)} \|\vphi_h\|_{L^2(\Ga_h\t)} \Big) \\
	&\ + c \|(\mu_h - 1)\|_{L^{\infty}(\Ga_h\t)} \|z_h\|_{L^2(\Ga_h\t)}\|\vphi_h\|_{L^2(\Ga_h\t)}\\
	\leq &\ c h^{k+1} \Big(\|z_h\|_{L^2(\Ga_h\t)}\|\vphi_h\|_{L^2(\Ga_h\t)} + \|z_h\|_{L^2(\Ga_h\t)} \|\mat_h \vphi_h\|_{L^2(\Ga_h\t)} \\
	&\ + \|\mat_h z_h\|_{L^2(\Ga_h\t)} \|\vphi_h\|_{L^2(\Ga_h\t)} \Big),
	\end{aligned}
	\end{equation*}
	where we have used the bounds on the discrete velocity from Lemma \ref{lemma:VhBhBound}, and the geometric estimate $\|1 - \mu_h\|_{L^{\infty}} \leq c h^{k+1}$ \eqref{eq: GeometricSurfaceMeasure_3}.
\qed\end{proof}

\subsection{Defect bounds}

In this section we prove bounds for the defects and for their time derivatives, i.e.~we prove that condition \eqref{eq:defect bounds - assumed} of Proposition~\ref{proposition:stability} is indeed satisfied.
\begin{proposition}
\label{proposition:consistency}
	Let $u,w$ solve the Cahn--Hilliard equation on an evolving surface \eqref{eq:CH system}. Furthermore, let $u,w$ and the continuous surface velocity $v$ be sufficiently smooth, e.g.~satisfying \eqref{eq:regularity conditions}. Then, for all $h \leq h_0$ sufficiently small, and for all $t \in [0,T]$: 
	
	(a) For general nonlinearities $f$ and $g$ the defects are bounded as
	\begin{equation}
	\label{eq:defect bounds}
		\begin{aligned}
			&\ \|\bfdu\t\|_{\bfM\t} = \|d_u\|_{L^2(\Ga_h\t)} \leq ch^{k},\\
			&\ \|\bfdud\t\|_{\bfM\t} = \|\mat_h d_u\|_{L^2(\Ga_h\t)}\leq ch^{k} ,\\
			&\ \|\bfdw\t\|_{\bfM\t} = \|d_w\|_{L^2(\Ga_h\t)}\leq ch^{k},\\
			&\ \|\bfdwd\t\|_{\bfM\t} = \|\mat_h d_w\|_{L^2(\Ga_h\t)}\leq ch^{k} .
		\end{aligned}
	\end{equation}
	
	(b) If $f$ and $g$ are both \emph{independent} of $\nbg u$, then the above estimates in \eqref{eq:defect bounds} are improved to $O(h^{k+1})$.
%	\begin{equation}
%	\label{eq:defect bounds - improved}
%		\begin{aligned}
%			&\ \|\bfdu\t\|_{\bfM\t} = \|d_u\|_{L^2(\Ga_h\t)} \leq ch^{k+1},\\
%			&\ \|\bfdud\t\|_{\bfM\t} = \|\mat_h d_u\|_{L^2(\Ga_h\t)}\leq ch^{k+1} ,\\
%			&\ \|\bfdw\t\|_{\bfM\t} = \|d_w\|_{L^2(\Ga_h\t)}\leq ch^{k+1},\\
%			&\ \|\bfdwd\t\|_{\bfM\t} = \|\mat_h d_w\|_{L^2(\Ga_h\t)}\leq ch^{k+1} .
%		\end{aligned}
%	\end{equation}
	
	The constant $c > 0$ is independent of $h$ and $t$, but depends on the bounds on Sobolev norms of $u,w$ and the surface velocity $v$.
\end{proposition}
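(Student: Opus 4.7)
The plan is to bound each defect as a residual functional on $S_h\t$: I would test the defining identities \eqref{eq:defect definition} against an arbitrary $\vphi_h \in S_h\t$, subtract the continuous weak equations \eqref{eq:CH WeakSolution} applied to the lifted test function $\vphi_h^\ell$, and split the resulting residual into three kinds of contributions: (i) geometric differences between $(m,a,r)$ and their discrete counterparts, handled by Lemma~\ref{lemma:GeometricBilinear} and, in the differentiated case, by Lemma~\ref{lemma:GeometricNew}; (ii) Ritz-map consistency errors, controlled by Lemma~\ref{lemma:Ritz map error} together with the defining orthogonality \eqref{eq:definition Ritz map}; (iii) nonlinear consistency errors, controlled by the local Lipschitz properties of $f$, $g$ and their first derivatives. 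Taking the supremum over $\vphi_h$ and invoking the norm equivalence \eqref{eq:norm equivalence} would then translate the resulting $L^2(\Ga_h\t)$ bounds on $d_h^u$, $d_h^w$, $\mat_h d_h^u$, $\mat_h d_h^w$ into the $\bfM\t$-norm bounds claimed in \eqref{eq:defect bounds}.

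For the static bounds the decisive step is the stiffness term. Writing
\[
a_h(u_h^*,\vphi_h) - a(u,\vphi_h^\ell) = \bigl[a_h(u_h^*,\vphi_h) - a((u_h^*)^\ell,\vphi_h^\ell)\bigr] + a\bigl((u_h^*)^\ell - u,\vphi_h^\ell\bigr),
\]
the first bracket is a geometric form difference of size $O(h^{k+1})$, while the second is rewritten via \eqref{eq:definition Ritz map} as $-m((u_h^*)^\ell - u,\vphi_h^\ell)$ up to a geometric remainder; both pieces give $O(h^{k+1})\|\vphi_h^\ell\|_{L^2(\Gat)}$ by the $L^2$ part of Lemma~\ref{lemma:Ritz map error}. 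The mass and $r$-terms are similarly direct. The nonlinear difference $g(u_h^*,\nb_{\Ga_h}u_h^*) - g(u,\nbg u)$ requires, in case (a), the full $H^1$ Ritz error, producing $O(h^k)$, while in case (b) only the $L^2$ Ritz error is needed, producing $O(h^{k+1})$; this single Lipschitz step is the unique source of the two regimes in the statement. The bound on $\|\du\|_{\bfM\t}$ follows by the same template, with an extra material-derivative contribution from \eqref{eq:transport formula - m}.

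For the differentiated defects I would differentiate the identities in \eqref{eq:defect definition} in time using the transport formulas \eqref{eq:transport formula - T2 and T3}, producing weak equations for $\mat_h d_h^{u/w}$ tested against $\vphi_h$ with $\mat_h \vphi_h = 0$. The same three-way split applies, now with additional contributions: mass terms involving $(\mat_h)^{(j)} u_h^*$ and $\mat_h w_h^*$, absorbed by Lemma~\ref{lemma:Ritz map error} used with $l=1,2$; velocity-induced terms mixing $\mat_h V_h$, $(\nb_{\Ga_h}\!\cdot V_h)^2$ and $\mat_h(\nb_{\Ga_h}\!\cdot V_h)$ in exactly the combination controlled by the new estimate Lemma~\ref{lemma:GeometricNew} together with Lemma~\ref{lemma:VhBhBound}; and differentiated nonlinearities, which by the chain rule and the interchange formulas \eqref{eq:interchange formulas} reduce to expressions in $\mat_h u_h^*$ and $\nb_{\Ga_h}\mat_h u_h^*$ plus velocity factors, bounded by the local Lipschitz continuity of $\pa_i f$, $\pa_i g$ and again by Lemma~\ref{lemma:Ritz map error}.

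The main obstacle is the interaction between the differentiated stiffness term and the differentiated nonlinearity in case (a). Time differentiation breaks the $a^*$-orthogonality trick used in the static step, so the Ritz identity must be combined with the transport formula for $a$; the cross-terms produced in the process match precisely the quadratic-plus-material-derivative structure of Lemma~\ref{lemma:GeometricNew}, which was engineered for this purpose. In parallel, the gradient-dependent nonlinearity in case (a) forces one to process $\mat_h(\nb_{\Ga_h} u_h^*)$ through \eqref{eq:interchange formulas}, rewriting it as $\nb_{\Ga_h}\mat_h u_h^*$ plus a bounded velocity correction, and then to apply Lemma~\ref{lemma:Ritz map error} with $l=1$ at the cost of one power of $h$ — this is exactly what produces the $O(h^k)$ defect bound in case (a); when $f,g$ are independent of $\nbg u$ the step is bypassed, recovering the improved $O(h^{k+1})$ bound of case (b).
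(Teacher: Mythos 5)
Your overall strategy coincides with the paper's: residual/duality in $L^2$, the three-way split into geometric form differences, Ritz-map errors and nonlinear Lipschitz terms, transport formulas for the differentiated defects, Lemma~\ref{lemma:GeometricNew} for the differentiated $r$-pair, and the interchange formulas plus the $l=1$ Ritz estimate as the unique source of the $O(h^k)$ loss in case~(a). However, your treatment of the stiffness term contains a genuine gap. You split
$a_h(u_h^*,\vphi_h) - a(u,\vphi_h^\ell)$ into the geometric difference $a_h(u_h^*,\vphi_h) - a((u_h^*)^\ell,\vphi_h^\ell)$ plus $a((u_h^*)^\ell-u,\vphi_h^\ell)$ and claim both pieces are $O(h^{k+1})\|\vphi_h^\ell\|_{L^2(\Gat)}$. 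That is false for the first piece: geometric perturbation estimates for the bilinear form $a$ (the analogue of Lemma~\ref{lemma:GeometricBilinear}) only give $ch^{k+1}\|\nbg (u_h^*)^\ell\|_{L^2}\,\|\nbg\vphi_h^\ell\|_{L^2}$, i.e.\ they cost a gradient of the test function. Since the defect must be measured in $L^2$ (the stability proposition needs $\|\du\|_{\bfM}$, not an $H^{-1}$-type norm), a residual controlled only by $\|\vphi_h\|_{H^1}$ is useless, and recovering $\|\vphi_h\|_{L^2}$ by an inverse inequality would cost a power of $h$ and destroy the optimal rate in case~(b).

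The fix — and the reason the generalised Ritz map is defined through $a^*=a+m$ — is that the stiffness difference need never be split geometrically: by \eqref{eq:definition Ritz map} one has the exact identity $a_h(\widetilde R_h u,\vphi_h) - a(u,\vphi_h^\ell) = -\bigl(m_h(\widetilde R_h u,\vphi_h) - m(u,\vphi_h^\ell)\bigr)$, so the entire stiffness contribution becomes a mass-form pair, which is then handled by Lemma~\ref{lemma:GeometricBilinear} and the $L^2$ Ritz error with the correct $\|\vphi_h^\ell\|_{L^2}$ weight. (Equivalently: if you push your own decomposition through $a = a^* - m$ and apply the Ritz identity to the $a^*$ part, the two geometric $a$-remainders cancel identically — but this cancellation is essential and must be stated; estimating the first bracket in isolation, as you propose, does not work.) A second, smaller omission: invoking the local Lipschitz continuity of $f$, $g$ and their derivatives at the Ritz map's values requires a uniform $W^{1,\infty}$ bound on $\widetilde R_h u$ and $\mat_h\widetilde R_h u$, which the paper establishes via an inverse estimate against the interpolant in \eqref{eq:Ritz map W^1,infty bound}; your proposal uses the Lipschitz property without securing this boundedness.
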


\begin{proof}	
The Ritz map \eqref{eq:definition Ritz map} of the exact solutions $u$ and $w$ satisfies the discrete problem only up to some defects, $d_u\ct \in S_h\t$ and $d_w\ct \in S_h\t$, defined in \eqref{eq:defect definition}.
Rewriting these equations using the bilinear form notation from \eqref{eq:bilinear forms}, we thus have, for an arbitrary $\vphi_h \in S_h\t$, 
\begin{equation}
\label{eq: defect}
\begin{aligned}
	m_h(d_u, \vphi_h) =\ & m_h(\mat_h\widetilde{R}_h u, \vphi_h) + a_h(\widetilde{R}_h w, \vphi_h) \\
	\ & - m_h(f(\widetilde{R}_h u,\nb_{\Ga_h}\widetilde{R}_h u), \vphi_h) + r_h(V_h;\widetilde{R}_h u, \vphi_h) , \\
	m_h(d_w, \vphi_h) =\ & a_h(\widetilde{R}_h u, \vphi_h) + m_h(g(\widetilde{R}_h u,\nb_{\Ga_h}\widetilde{R}_h u), \vphi_h) - m_h(\widetilde{R}_h w, \vphi_h).
	\end{aligned}
\end{equation}

Upon subtracting the corresponding equations for the exact solution \eqref{eq:CH WeakSolution - diff} with $\vphi=\vphi_h^\ell$ and applying the transport formula \eqref{eq:transport formula - T2} (with $\mat_h \vphi_h^\ell = 0$), from the equations in \eqref{eq: defect}, and then adding and subtracting some terms in order to apply the definition of the Ritz map $\widetilde{R}_h$ \eqref{eq:definition Ritz map}, we obtain the following two equations satisfied by the defects $d_u$ and $d_w$:
\begin{subequations}
\label{eq:defect expressions}
	\begin{align}
	\label{eq:defect expression - d_u}
		m_h(d_u, \vphi_h) = &\ \Big( m_h(\mat_h\widetilde{R}_h u, \vphi_h) - m(\mat_h u, \vphi_h^\ell) \Big) \nonumber \\
%		&\ + \Big( a_h^*(\widetilde{R}_h w, \vphi_h) - a^*(w, \vphi_h^\ell) \Big) \nonumber \\
		&\ - \Big( m_h(\widetilde{R}_h w, \vphi_h) - m(w, \vphi_h^\ell) \Big) \nonumber \\
		&\ + \Big( r_h(V_h;\widetilde{R}_h u, \vphi_h) - r(v_h; u, \vphi_h^\ell) \Big) \nonumber \\
		&\  - \Big(m_h(f(\widetilde{R}_h u,\nb_{\Ga_h}\widetilde{R}_h u), \vphi_h) - m(f(u,\nb_{\Ga} u), \vphi_h^\ell)\Big) \nonumber \\
		= &\ I_u + II_u + III_u + IV_u, \\
		\label{eq:defect expression - d_w}
		m_h(d_w, \vphi_h) = &\ - \Big( m_h(\widetilde{R}_h u, \vphi_h) - m(u, \vphi_h^\ell) \Big) \nonumber\\
		&\ - \Big( m_h(\widetilde{R}_h w, \vphi_h) - m(w, \vphi_h^\ell) \Big)  \nonumber \\
		&\  + \Big(m_h(g(\widetilde{R}_h u,\nb_{\Ga_h}\widetilde{R}_h u), \vphi_h) - m(g(u,\nb_{\Ga} u), \vphi_h^\ell)\Big) \nonumber  \\
		= &\ I_w + II_w + III_w.
	\end{align}
\end{subequations}
We now estimate the defects and their material derivatives in the $L^2(\Ga\t)$ norm by bounding each pair on the right-hand sides of the above equations separately, using the geometric estimates from the previous subsection and using similar techniques as in \cite{DziukElliott_L2,highorderESFEM}. 
Since throughout the proofs most norms are on $\Ga\t$, we will omit these below and write $L^2$, $H^{k+1}$ instead of  $L^2(\Ga\t)$, $H^{k+1}(\Ga\t)$, etc.

%
%%%%%%%%%%%%%%%%%%%%%%%%%%%%%%%%%%%%%%%%%%%%%%%%%%%%%%%%%%%%%%%%%%%%%%%%%%%%%%%%%%%%%%%%%%%%%%%
%%%%%%%%%%%%%%%%%%%%%%%%%%%%%%%%%%% DEFECT du  %%%%%%%%%%%%%%%%%%%%%%%%%%%%%%%%%%%%%%%%%%%%%%%%
%%%%%%%%%%%%%%%%%%%%%%%%%%%%%%%%%%%%%%%%%%%%%%%%%%%%%%%%%%%%%%%%%%%%%%%%%%%%%%%%%%%%%%%%%%%%%%%
%
\emph{Bound for $d_u$:}	
For the pair in the first line, we add and subtract terms to obtain
\begin{equation}
\label{eq:est for d_u terms - mass term with material derivative}
	\begin{aligned}
		I_u 
		= &\ \Big( m_h(\mat_h\widetilde{R}_h u, \vphi_h) - m(\mat_h R_h u, \vphi_h^\ell) \Big) 
%		\\ &\ 
		+ m(\mat_h (R_h u - u), \vphi_h^\ell) \\
		\leq &\ c h^{k+1} \|\mat_h R_h u\|_{L^2} \|\vphi_h^\ell\|_{L^2} 
%		\\ &\ 
		+ c h^{k+1} \Big(\|u\|_{H^{k+1}} + \|\mat u\|_{H^{k+1}} \Big) \|\vphi_h^\ell\|_{L^2} \\
		\leq &\ c h^{k+1} \Big( \|u\|_{H^{k+1}} + \|\mat u\|_{H^{k+1}} \Big) \|\vphi_h^\ell\|_{L^2} ,
	\end{aligned}	
\end{equation}
where we have used Lemma~\ref{lemma:GeometricBilinear} together with the fact that $\mat_h (z_h^\ell) = (\mat_h z_h)^\ell$ (\cite[Lemma~4.1]{DziukElliott_L2}) and the Ritz map error bound Lemma~\ref{lemma:Ritz map error}. The Ritz map error estimate is again used to show the bound $\|\mat_h R_h u\|_{L^2} \leq c ( \|u\|_{H^{k+1}} + \|\mat u\|_{H^{k+1}} )$.

By the same techniques, we prove the following bound for $II_u$:
\begin{equation}
\label{eq:est for d_u terms - mass term}
	\begin{aligned}
		II_u \leq &\
		- \Big( m_h(\widetilde{R}_h w, \vphi_h) - m(R_h w, \vphi_h^\ell) \Big) - m(R_h w - w, \vphi_h^\ell) \\
		%\leq &\ c h^{k+1} \Big( \|u\|_{H^{k+1}} + \|\mat u\|_{H^{k+1}} + \|w\|_{H^{k+1}} + \|\mat w\|_{H^{k+1}} \Big) \|\vphi_h^\ell\|_{L^2} \\
		\leq &\ c h^{k+1} \|w\|_{H^{k+1}} \|\vphi_h^\ell\|_{L^2} .
	\end{aligned}
\end{equation}
The third term $III_u$ is estimated using similar arguments as before, by Lemma~\ref{lemma:GeometricBilinear}, Lemma~\ref{lemma:Ritz map error}, and the boundedness of $v_h$ (proved using Lemma~\ref{lemma:velocity error estimate}), 
\begin{equation}
\label{eq:est for d_u terms - r term}
	\begin{alignedat}{3}
		III_u 
		= &\ r_h(V_h;\widetilde{R}_h u, \vphi_h) - r(v_h; u, \vphi_h^\ell) \\
		= &\ \Big( r_h(V_h;\widetilde{R}_h u, \vphi_h) - r(v_h; R_h u, \vphi_h^\ell) \Big) 
		+ r(v_h; R_h u - u, \vphi_h^\ell) \\
%%		\\	&\ 
%		+ r(v_h - v; u, \vphi_h^\ell) 
%		\\	&\ 
%		+ m(u , (v_h - v) \cdot \nbg \vphi_h^\ell) && \quad \text{(by \eqref{eq:defect pair - pre estimate form})} \\
%		= &\ \Big( r_h(V_h;\widetilde{R}_h u, \vphi_h) - r(v_h; R_h u, \vphi_h^\ell) \Big) \\
%		&\ + r(v_h; R_h u - u, \vphi_h^\ell)
%		\\	&\ 
%		- m((v_h - v) \cdot \nbg u,  \vphi_h^\ell) 
%		- m(u , (v_h - v) \cdot \nbg \vphi_h^\ell)
%		\\	&\ 
%		+ m(u , (v_h - v) \cdot \nbg \vphi_h^\ell) 
%		&& \quad \text{(by \eqref{eq:Green's formula for g term})} \\
%		\leq &\ c h^{k+1} \|R_h u\|_{L^2} \|\vphi_h^\ell\|_{L^2} 
%		+ c h^{k+1} \|u\|_{H^{k+1}} \|\vphi_h^\ell\|_{L^2} 
%		+ c h^{k+1} \|u\|_{H^1} \|\vphi_h^\ell\|_{L^2} \\
		\leq &\ c h^{k+1} \|u\|_{H^{k+1}} \|\vphi_h^\ell\|_{L^2} .
	\end{alignedat}
\end{equation}
 
The fourth term $IV_u$ including the non-linearity is estimated using the above techniques, and in addition, due to the (locally Lipschitz continuous) non-linear terms $f$ and $g$, requires a $W^{1,\infty}$ bound on the Ritz map, which we obtain by 
%\begin{equation}
%\label{eq:Ritz map L^infty bound}
%\begin{aligned}
%	\|R_h w\|_{L^\infty} \! 
%	\leq &\ \|R_h w - I_h w\|_{L^\infty} + \|I_h w\|_{L^\infty} \\
%	\leq &\ c h^{-d/2} \|R_h w - I_h w\|_{L^2} + \|I_h w\|_{L^\infty} \\
%	\leq &\ c h^{-d/2} \big( \|R_h w - w\|_{L^2} + \|w - I_h w\|_{L^2} \big) \! + \|I_h w - w\|_{L^\infty} \! + \|w\|_{L^\infty} \\
%	\leq &\ c h^{k+1-d/2} \big(\|u\|_{H^{k+1}} + \|w\|_{L^{2}}\big) %+ c h^{k+1-d/2} \|w\|_{H^{k+1}} 
%	+ (c h^2 + 1) \|w\|_{W^{2,\infty}} , 
%\end{aligned}
%\end{equation}
\begin{equation}
\label{eq:Ritz map W^1,infty bound}
\begin{aligned}
\|R_h u\|_{W^{1,\infty}} \! 
\leq &\ \|R_h u - I_h u\|_{W^{1,\infty}} + \|I_h u\|_{W^{1,\infty}} \\
\leq &\ c h^{-d/2} \|R_h u - I_h u\|_{H^1} + \|I_h u\|_{W^{1,\infty}} \\
\leq &\ c h^{-d/2} \big( \|R_h u - u\|_{H^1} + \|u - I_h u\|_{H^1} \big) \! + \|I_h u - u\|_{W^{1,\infty}} \! + \|u\|_{W^{1,\infty}} \\
\leq &\ c h^{k-d/2} \|u\|_{H^{k+1}} %+ c h^{k+1-d/2} \|w\|_{H^{k+1}} 
+ (c h + 1) \|u\|_{W^{2,\infty}} , 
\end{aligned}
\end{equation}
with $k - d/2\geq 0$, using an inverse estimate \cite[Theorem~4.5.11]{BreS08}, interpolation error bounds Lemma~\ref{lemma:interpolation error estimate}, and for the last term the (sub-optimal) interpolation error estimate of \cite[Proposition~2.7]{Demlow} (with $p=\infty$).
We then estimate, using 
\begin{equation}
\label{eq:est for d_u terms - g term}
	\begin{aligned}
		IV_u \leq &\  m_h(f(\widetilde{R}_h u,\nb_{\Ga_h}\widetilde{R}_h u), \vphi_h) - m(f(R_h u, \nb_\Ga R_h u), \vphi_h^\ell) \\
		&\ + m(f(R_h u,\nb_{\Ga} R_h u) - f(u,\nb_{\Ga} u), \vphi_h^\ell) \\
		\leq &\ ch^{k+1} \|f(R_h u, \nb_\Ga R_h u)\|_{L^2} \|\vphi_h^\ell\|_{L^2} 
		%	\\ &\
		+ c \|R_h u - u\|_{H^1} \|\vphi_h^\ell\|_{L^2} \\
		\leq &\ \Big( c h^{k+1} \|f(R_h u, \nb_\Ga R_h u)\|_{L^2} + c h^k \|u\|_{H^{k+1}} \Big) \|\vphi_h^\ell\|_{L^2} \\
%		\leq &\ \Big( c h^{k+1} \|f(R_h u, \nb_\Ga R_h u) - f(u, \nb_\Ga u)\|_{L^2} + \|f(u, \nb_\Ga u)\|_{L^2} + c h^k \|u\|_{H^{k+1}} \Big)  \|\vphi_h^\ell\|_{L^2} \\
		\leq &\ \Big( c h^{k+1} \big( c \|u\|_{H^{k+1}} + \|f(u, \nb_\Ga u)\|_{L^2} \big) + c h^k \|u\|_{H^{k+1}} \Big) \|\vphi_h^\ell\|_{L^2} .
	\end{aligned}
\end{equation}
Note in particular that the only term in all of the above consistency estimates which is of order $O(h^k)$ is the last term in \eqref{eq:est for d_u terms - g term}, which is due to the presence of $\nbg u$ in the nonlinearity.

The estimates \eqref{eq:est for d_u terms - mass term with material derivative}--\eqref{eq:est for d_u terms - g term} together, using the norm equivalence \eqref{eq:norm equivalence}, and the definition of the $L^2$ norm, in general for $f(u,\nbg u)$, yields
\begin{subequations}
\label{eq:est for d_u}
\begin{equation}
	\begin{aligned}
	\|d_u\|_{L^2} = &\ \sup_{0 \neq \vphi_h \in S_h} \frac{m_h(d_u,\vphi_h)}{\|\vphi_h\|_{L^2}} \\
%	\leq &\ c h^{k+1} \Big( \|u\|_{H^{k+1}} + \|\mat u\|_{H^{k+1}} + \|w\|_{H^{k+1}} + \|\mat w\|_{H^{k+1}} \Big) .
	\leq &\ c h^{k} \Big( \|u\|_{H^{k+1}} + \|\mat u\|_{H^{k+1}} + \|w\|_{H^{k+1}}\Big) .
%	\leq &\ c h^{k} \sum_{j=0}^1 \Big( \|(\mat)^{(j)} u\|_{H^{k+1}} + \|(\mat)^{(j)} w\|_{H^{k+1}} \Big) .
	\end{aligned} 
\end{equation}
 
If $f$ is \emph{independent} of $\nbg u$, then by the note after \eqref{eq:est for d_u terms - g term}, the defect estimate improves to 
\begin{equation}
	\|d_u\|_{L^2}
	\leq c h^{k+1} \Big( \|u\|_{H^{k+1}} + \|\mat u\|_{H^{k+1}} + \|w\|_{H^{k+1}}\Big) .
\end{equation}
\end{subequations}

%%%%%%%%%%%%%%%%%%%%%%%%%%%%%%%%%%%%%%%%%%%%%%%%%%%%%%%%%%%%%%%%%%%%%%%%%%%%%%%%%%%%%%%%%%%%%%%
%%%%%%%%%%%%%%%%%%%%%%%%%%%%%%%%%%% DEFECT pdu %%%%%%%%%%%%%%%%%%%%%%%%%%%%%%%%%%%%%%%%%%%%%%%%
%%%%%%%%%%%%%%%%%%%%%%%%%%%%%%%%%%%%%%%%%%%%%%%%%%%%%%%%%%%%%%%%%%%%%%%%%%%%%%%%%%%%%%%%%%%%%%%
\emph{Bound for $\mat_h d_u$:}
We start by differentiating the defect equation for $d_u$ \eqref{eq:defect expression - d_u} with respect to time. Using that  $\mat_h\vphi_h = \mat_h (\vphi_h^\ell) = 0$, we obtain
\begin{equation*}
	\begin{aligned}
		m_h(\mat_h d_u , \vphi_h) = - r_h(V_h;d_u,\vphi_h) + \diff \Big(I_u + II_u + III_u + IV_u \Big) .
	\end{aligned}
\end{equation*}
The first term is immediately bounded, using Lemma~\ref{lemma:VhBhBound}, the Cauchy--Schwarz inequality and \eqref{eq:est for d_u}, by 
\begin{equation}
\label{eq:est for mat d_u terms - g_h}
%	r_h(V_h;d_u,\vphi_h) \leq c h^{k+1} \Big( \|u\|_{H^{k+1}} + \|\mat u\|_{H^{k+1}} + \|w\|_{H^{k+1}} + \|\mat w\|_{H^{k+1}} \Big) .
	r_h(V_h;d_u,\vphi_h) \leq c h^{k+1} \Big( \|u\|_{H^{k+1}} + \|\mat u\|_{H^{k+1}} + \|w\|_{H^{k+1}} \Big) \|\vphi_h^\ell\|_{L^2} .
\end{equation}
%\begin{equation*}
%\begin{aligned}
%	\diff m(w,\vphi_h^\ell) - m_h(\widetilde{R}_h w, \vphi_h) =&\ m_h(\mat_h \widetilde{R}_h w, \vphi_h) - m(\mat_h w, \vphi_h^\ell)\\
%	&+ r_h(V_h; \widetilde{R}_h w, \vphi_h) - r(v; w, \vphi_h^\ell)\\
%	%
%	\diff m_h(\mat_h\widetilde{R}_h u, \vphi_h) - m(\mat u, \vphi_h^\ell) =&\ m_h(\mat_h \mat_h\widetilde{R}_h u, \vphi_h) - m(\mat_h 			\mat u, \vphi_h^\ell)    \\
%	&+ r_h(V_h; \mat_h\widetilde{R}_h u, \vphi_h) - r(v; \mat  u, \vphi_h^\ell)\\
%	%
%	\diff r_h(V_h;\widetilde{R}_h u, \vphi_h) - r(v_h; R_h u, \vphi_h^\ell) =&\ m_h(\mat_h (\nbgh \cdot V_h)\widetilde{R}_h u, \vphi_h) - m(\mat_h (\nbg \cdot v_h) R_h u, \vphi_h^\ell) \\
%	&+ r_h(V_h, \mat_h\widetilde{R}_h u, \vphi_h) - r(v_h, \mat_h R_h u, \vphi_h^\ell)\\
%	&+ m_h((\nbgh \cdot V_h)^2\widetilde{R}_h u, \vphi_h) - m((\nbg \cdot v_h)^2  R_h u, \vphi_h^\ell) \\
%	%
%	\diff r(v; R_h u - u, \vphi_h^\ell) = &\ m(\mat_h (\nbg \cdot v) ( R_h u - u), \vphi_h^\ell) \\
%	&+ m( (\nbg \cdot v) \mat_h ( R_h u - u), \vphi_h^\ell) \\
%	&+ m((\nbg \cdot v_h )(\nbg \cdot v ) ( R_h u - u), \vphi_h^\ell) \\
%	\diff -m(\nbg R_h u, (v_h - v ) \vphi_h^\ell) =& - m(\mat_h\nbg R_h u, (v_h - v ) \vphi_h^\ell) \\
%	&- m(\nbg R_h u, \mat_h(v_h - v ) \vphi_h^\ell) \\
%	&- m((\nbgh \cdot v_h)\nbg R_h u, (v_h - v ) \vphi_h^\ell)
%\end{aligned}	
%\end{equation*}
The terms differentiated in time are estimated separately, using analogous techniques as before.

For the first term, by the transport formulas \eqref{eq:transport formula - T2} and \eqref{eq:transport formula - T3}, we obtain
\begin{equation}
\label{eq:est for mat d_u terms - I}
	\begin{aligned}
		\diff I_u 
%		= &\ \diff \Big( m_h(\mat_h\widetilde{R}_h u, \vphi_h) - m(\mat_h u, \vphi_h^\ell) \Big) \\
		= &\ \Big( m_h((\mat_h)^{(2)} \widetilde{R}_h u, \vphi_h) - m((\mat_h)^{(2)} u, \vphi_h^\ell) \Big) \\
		&\ + \Big( r_h(V_h; \mat_h\widetilde{R}_h u, \vphi_h) - r(v_h; \mat_h u, \vphi_h^\ell) \Big) \\
		\leq &\  c h^{k+1} \sum_{j=0}^2 \|(\mat)^{(j)} u\|_{H^{k+1}} \|\vphi_h^\ell\|_{L^2} 
		+ c h^{k+1} \sum_{j=0}^1 \|(\mat)^{(j)} u\|_{H^{k+1}} \|\vphi_h^\ell\|_{L^2} ,
	\end{aligned}
\end{equation}
where for the inequality we used the arguments used to show \eqref{eq:est for d_u terms - mass term with material derivative} and \eqref{eq:est for d_u terms - r term}.

By the same arguments, for the second term we obtain the bound
\begin{equation}
\label{eq:est for mat d_u terms - II}
	\begin{aligned}
		\diff II_u 
%		= &\ - \diff \Big( m_h(\widetilde{R}_h w, \vphi_h) - m(w, \vphi_h^\ell) \Big) \\
		= &\ - \Big( m_h(\mat_h \widetilde{R}_h w, \vphi_h) - m(\mat_h w, \vphi_h^\ell) \Big) \\
		&\ - \Big( r_h(V_h; \widetilde{R}_h w, \vphi_h) - r(v_h; w, \vphi_h^\ell) \Big) \\
%		\leq &\ c h^{k+1} \sum_{j=0}^1 \Big( \|(\mat)^{(j)} u\|_{H^{k+1}} + \|(\mat)^{(j)} w\|_{H^{k+1}} \Big) \|\vphi_h^\ell\|_{H^1} .
		\leq &\ c h^{k+1} \Big( \|w\|_{H^{k+1}} + \|\mat w\|_{H^{k+1}} \Big) \|\vphi_h^\ell\|_{L^2} .
	\end{aligned}
\end{equation}
By the time differentiation of the third term, using the transport formulas \eqref{eq:transport formula - T2} and \eqref{eq:transport formula - T3}, we obtain
\begin{equation*}
	\begin{aligned}
		\diff III_u  
		= &\ \diff \Big( r_h(V_h;\widetilde{R}_h u, \vphi_h) - r(v_h; u, \vphi_h^\ell) \Big) \Big) \\
		= &\ \Big[ m_h(\mat_h (\nbgh \cdot V_h)\widetilde{R}_h u, \vphi_h) + m_h((\nbgh \cdot V_h)^2\widetilde{R}_h u, \vphi_h)  \\
		&\ - m(\mat_h (\nbg \cdot v_h) u, \vphi_h^\ell) - m((\nbg \cdot v_h)^2 u, \vphi_h^\ell) \Big] \\
		&\ + \Big( \big( r_h(V_h, \mat_h\widetilde{R}_h u, \vphi_h) - r(v_h, \mat_h u, \vphi_h^\ell) \Big) =: \boldsymbol{\dot}{III}_u^1 + \boldsymbol{\dot}{III}_u^2  .
%		&\ + m(\mat u , (v_h - v) \cdot \nbg \vphi_h^\ell) + m(u , \mat(v_h - v) \cdot \nbg \vphi_h^\ell) \\
%		&\ + m(u , (v_h - v) \cdot \mat (\nbg \vphi_h^\ell)) \Big) =: \diff III_u^1 + \diff III_u^2  .
	\end{aligned}
\end{equation*}
The pair in the third line is estimated by previous arguments just as before, by
\begin{equation}
\label{eq:est for mat d_u terms - III - easy ones}
	\begin{aligned}
		\boldsymbol{\dot}{III}_u^2 \leq c h^{k+1} \Big( \|u\|_{H^{k+1}} + \|\mat u\|_{H^{k+1}} \Big) \|\vphi_h^\ell\|_{L^2} .
	\end{aligned}
\end{equation}
The remaining pair in the rectangular brackets is estimated by similar ideas as above, adding and subtracting intermediate terms, using the geometric approximation estimate from Lemma~\ref{lemma:GeometricNew}, Ritz map error estimates Lemma~\ref{lemma:Ritz map error} and bounds on expressions with $v_h$ (shown using Lemma~\ref{lemma:velocity error estimate} with $l = 0$ and $1$), and Lemma~\ref{lemma:VhBhBound}:
\begin{equation}
\label{eq:est for mat d_u terms - III - new}
	\begin{aligned}
		\boldsymbol{\dot}{III}_u^1 = &\ \Big( m_h(\mat_h (\nbgh \cdot V_h)\widetilde{R}_h u, \vphi_h) + m_h((\nbgh \cdot V_h)^2\widetilde{R}_h u, \vphi_h)  \\
		&\ - m(\mat_h (\nbg \cdot v_h) R_h u, \vphi_h^\ell) - m((\nbg \cdot v_h)^2 R_h u, \vphi_h^\ell) \Big) \\
		&\ + m(\mat_h (\nbg \cdot v_h) (R_h u - u) , \vphi_h^\ell) 
		+ m((\nbg \cdot v_h)^2 (R_h u - u) , \vphi_h^\ell) \\
		\leq &\ c h^{k+1} \Big(\| R_h u\|_{L^2} + \|\mat R_h u\|_{L^2} \Big) \|\vphi_h^\ell\|_{L^2} 
		+ c h^{k+1} \|u\|_{H^{k+1}} \|\vphi_h^\ell\|_{L^2} \\
		\leq &\ c h^{k+1} \Big( \|u\|_{H^{k+1}} + \|\mat u\|_{H^{k+1}} \Big) \|\vphi_h^\ell\|_{L^2} .
	\end{aligned}	
\end{equation}
%%\redon 
%%\begin{equation}
%%%\label{eq:est for mat d_u terms - III - new}
%%	\begin{aligned}
%%		\diff III_u^1 = &\ \Big( m_h(\mat_h (\nbgh \cdot V_h)\widetilde{R}_h u, \vphi_h) + m_h((\nbgh \cdot V_h)^2\widetilde{R}_h u, \vphi_h)  \\
%%		&\ - m(\mat (\nbg \cdot v) u, \vphi_h^\ell) - m((\nbg \cdot v)^2 u, \vphi_h^\ell) \Big) \\
%%		= &\ \Big( m_h(\mat_h (\nbgh \cdot V_h)\widetilde{R}_h u, \vphi_h) + m_h((\nbgh \cdot V_h)^2\widetilde{R}_h u, \vphi_h) \\
%%		&\ - m(\mat_h (\nbg \cdot v_h) R_h u, \vphi_h^\ell) - m((\nbg \cdot v_h)^2 R_h u, \vphi_h^\ell) \Big) \\
%%		&\ + m(\mat_h (\nbg \cdot v_h) R_h u - u, \vphi_h^\ell) + m((\nbg \cdot v_h)^2 R_h u - u, \vphi_h^\ell) \\
%%		&\ - m((\mat (\nbg \cdot v) - \mat_h (\nbg \cdot v_h)) u, \vphi_h^\ell) - m((\nbg \cdot v)^2 - (\nbg \cdot v_h)^2 u, \vphi_h^\ell) 
%%	\end{aligned}	
%%\end{equation}
%%
%%\redoff 
%%\blueon 
 
For the time derivative of the fourth term using
% \begin{equation}
% \label{eq:est for mat d_u terms - IV}
% 	\begin{aligned}
% 		\diff IV_u \leq &\ m_h(\partial_1 g(\widetilde{R}_h u,\nb_{\Ga_h}\widetilde{R}_h u) \mat_h \widetilde{R}_h u, \vphi_h) - m_h(\partial_1  g(u,\nb_{\Ga} u) \mat_h  u, \vphi_h^\ell) \\
% 		& \ +  m_h(\partial_2 g(\widetilde{R}_h u,\nb_{\Ga_h}\widetilde{R}_h u) \mat_h \nb_{\Ga_h} \widetilde{R}_h u, \vphi_h) - m_h(\partial_2  g(u,\nb_{\Ga} u) \mat_h \nb_{\Ga} u, \vphi_h^\ell) \\
% 		 & \ +  m_h((\nb_{\Ga_h} \cdot V_h) \,g(\widetilde{R}_h u,\nb_{\Ga_h}\widetilde{R}_h u), \vphi_h) - m( (\nb_{\Ga} \cdot v_h)\, g(u,\nb_{\Ga} u), \vphi_h^\ell) \\
% 		\leq &\ \Big( c h^{k+1} \big( c \|u\|_{H^{k+1}} + \|g(u, \nb_\Ga u)\|_{L^2} + \|\partial_1 g(u, \nb_\Ga u)\|_{L^2} + \|\partial_2 g(u, \nb_\Ga u)\|_{L^2} \big) + c h^k \|u\|_{H^{k+1}} \Big) \|\vphi_h^\ell\|_{L^2}
% 	\end{aligned}
% \end{equation}
\begin{equation*}
 \begin{aligned}
 \mat_h \big( f(\widetilde{R}_h u,\nb_{\Ga_h}\widetilde{R}_h u) \big) = & \ \pa_1  f(\widetilde{R}_h u,\nb_{\Ga_h}\widetilde{R}_h u) \mat_h(\widetilde{R}_h u) \\
 & \ + \pa_2 f(\widetilde{R}_h u,\nb_{\Ga_h}\widetilde{R}_h u) \mat_h (\nb_{\Ga_h} \widetilde{R}_h u), \\
\mat_h \big( f(u,\nb_{\Ga} u) \big) = & \ \pa_1 f(u,\nb_{\Ga} u)\mat_h u + \pa_2 f(u,\nb_{\Ga} u) \mat_h (\nb_{\Ga} u) ,
 \end{aligned}
\end{equation*}
we obtain
\begin{equation}
\label{eq:est for mat d_u terms - IV - new}
	\begin{aligned}
		\diff IV_u = &\ m_h(\pa_1  f(\widetilde{R}_h u,\nb_{\Ga_h}\widetilde{R}_h u) \mat_h\widetilde{R}_h u, \vphi_h) - m(\pa_1 f(u,\nb_{\Ga} u)\mat_h u, \vphi_h^\ell) \\
		 & \ + m_h(\pa_2 f(\widetilde{R}_h u,\nb_{\Ga_h}\widetilde{R}_h u) \mat_h (\nb_{\Ga_h} \widetilde{R}_h u), \vphi_h)\\
		 & \ - m(\pa_2 f(u,\nb_{\Ga} u) \mat_h (\nb_{\Ga} u), \vphi_h^\ell) \\
		 & \ +  r_h( V_h; f(\widetilde{R}_h u,\nb_{\Ga_h}\widetilde{R}_h u), \vphi_h) - r( v_h; f(u,\nb_{\Ga} u), \vphi_h^\ell) \\
		 =: & \ \boldsymbol{\dot}{IV}_u^1 + \boldsymbol{\dot}{IV}_u^2 + \boldsymbol{\dot}{IV}_u^3.
%         = &\ m_h(\pa_1  g(\widetilde{R}_h u,\nb_{\Ga_h}\widetilde{R}_h u) \mat_h\widetilde{R}_h u, \vphi_h) - m(\pa_1 g(u,\nb_{\Ga} u)\mat_h u, \vphi_h^\ell) \\
% 		 & \ + m_h(\pa_2 g(\widetilde{R}_h u,\nb_{\Ga_h}\widetilde{R}_h u) \mat_h \nb_{\Ga_h} \widetilde{R}_h u, \vphi_h) - m(\pa_2 g(u,\nb_{\Ga} u) \mat_h \nb_{\Ga} u, \vphi_h^\ell) \\
% 		 & \ +  r_h(V_h; g(\widetilde{R}_h u,\nb_{\Ga_h}\widetilde{R}_h u), \vphi_h) - r( v_h ; g(R_h u, \nb_\Ga R_h u), \vphi_h^\ell) \\
% 		 & \ + r(v_h ; g(R_h u, \nb_\Ga R_h u) - g(u,\nb_{\Ga} u), \vphi_h^\ell) \\		 
% 		 \leq &\ c h^{k+1} \big(\| u \|_{L^\infty} \|\pa_1 g(R_h u, \nb_\Ga R_h u)\|_{L^2} + \| \nb_\Ga u \|_{L^\infty} \|\pa_2 g(R_h u, \nb_\Ga R_h u)\|_{L^2}+ \|g(R_h u, \nb_\Ga R_h u)\|_{L^2}\big)\|\vphi_h^\ell\|_{L^2} \\
% 		 & \ + m(\mat_h g(R_h u, \nb_\Ga R_h u) -  \mat_h g(u,\nb_{\Ga} u), \vphi_h^\ell) \\
% 		 & \ + r(v_h ; g(R_h u, \nb_\Ga R_h u) - g(u,\nb_{\Ga} u), \vphi_h^\ell)\\
% 		 \leq &\ c h^{k+1} \big(\|\mat_h g(u, \nb_\Ga u)\|_{L^2} + \|g(u, \nb_\Ga u)\|_{L^2}\big)\|\vphi_h^\ell\|_{L^2} \\
% 		 & \ + m(\mat_h g(R_h u, \nb_\Ga R_h u) -  \mat_h g(u,\nb_{\Ga} u), \vphi_h^\ell) \\
% 		 & \ + c \|R_h u - u\|_{H^1} \|\vphi_h^\ell\|_{L^2}
	\end{aligned}
\end{equation}
Similarly to \eqref{eq:Ritz map W^1,infty bound} we obtain a $W^{1,\infty}$ bound of the material derivative of the Ritz map, see also the proof of Proposition~7.1 in \cite{MCF}, which we need for the next two estimates.
The first term is estimated as
\begin{equation}
 \begin{aligned}
    \boldsymbol{\dot}{IV}_u^1 = & \ m_h(\pa_1 f(\widetilde{R}_h u,\nb_{\Ga_h}\widetilde{R}_h u) \mat_h\widetilde{R}_h u, \vphi_h) - m(\pa_1 f(u,\nb_{\Ga} u)\mat_h u, \vphi_h^\ell) \\
    = & \ m_h(\pa_1  f(\widetilde{R}_h u,\nb_{\Ga_h}\widetilde{R}_h u) \mat_h\widetilde{R}_h u, \vphi_h) - m(\pa_1  f(R_h u,\nb_{\Ga_h} R_h u) \mat_h R_h u, \vphi_h^\ell)\\
    & \ + m(\pa_1  f(R_h u,\nb_{\Ga_h} R_h u)\big(\mat_h R_h u - \mat_h u), \vphi_h^\ell) \\
    & \ + m\big(\big(\pa_1  f(R_h u,\nb_{\Ga_h} R_h u) - \pa_1 f(u,\nb_{\Ga} u)\big)\mat_h u, \vphi_h^\ell\big) \\
    \leq & \ \Big( c \, h^{k+1} \big(\|\pa_1  f(R_h u,\nb_{\Ga_h} R_h u) \, \mat_h R_h u\|_{L^2} + \|\mat_h u \|_{H^{k+1}}\big)  
%    \\ & \ \phantom{\Big( } 
    + \|R_h u - u\|_{H^1}\Big) \|\vphi_h^\ell\|_{L^2} \\
    \leq & \ \Big( c \, h^{k+1} \big(\|\pa_1  f(R_h u,\nb_{\Ga_h} R_h u) \, \mat_h R_h u\|_{L^2} + \|\mat_h u \|_{H^{k+1}}\big)   
%    \\ & \ \phantom{\Big( } 
    + c h^k \|u\|_{H^{k+1}}\Big) \|\vphi_h^\ell\|_{L^2} \\
    \leq & \ \Big( c \, h^{k+1} \big(c\|u\|_{H^{k+1}} + \|\pa_1  g(u,\nb_{\Ga} u)\|_{L^2} + \|\mat_h u \|_{H^{k+1}}\big)   
%    \\ & \ \phantom{\Big( } 
    + c h^k \|u\|_{H^{k+1}}\Big) \|\vphi_h^\ell\|_{L^2} 
 \end{aligned}
\end{equation}
using \eqref{eq:GeometricBilinearPsi}.
The second one additionally uses the interchange formulas \eqref{eq:interchange formulas} to obtain
\begin{equation*}
 \begin{aligned}
  \boldsymbol{\dot}{IV}_u^2 \leq & \ \Big( c \, h^{k+1} \|\pa_2 f(R_h u,\nb_{\Ga_h} R_h u) \, \mat_h \nb_{\Ga_h} R_h u\|_{L^2} 
%  \\ & \ \phantom{\Big( } 
  + c \, h^{k} \|\mat_h u \|_{H^{k+1}} + \|R_h u - u\|_{H^1}\Big) \|\vphi_h^\ell\|_{L^2} \\
  \leq & \ \Big( c \, h^{k+1} \|\pa_2 f(R_h u,\nb_{\Ga_h} R_h u) \, \mat_h \nb_{\Ga_h} R_h u\|_{L^2}  
%  \\ & \ \phantom{\Big( } 
  + c \, h^{k} \big(\|\mat_h u \|_{H^{k+1}}  + \|u\|_{H^{k+1}}\big)\Big) \|\vphi_h^\ell\|_{L^2} \\
  \leq & \ \Big( c \, h^{k+1} \big(c\|u\|_{H^{k+1}} + \|\pa_2  f(u,\nb_{\Ga}u)\|_{L^2}  
%  \\ & \ \phantom{\Big( } 
  + c \, h^{k} \big( c \|\mat u \|_{H^{k+1}}  + \|u\|_{H^{k+1}}\big)\Big) \|\vphi_h^\ell\|_{L^2} .
 \end{aligned}
\end{equation*}
The third one is bounded, similarly to \eqref{eq:est for d_u terms - g term}, by
\begin{equation*}
 \begin{aligned}
  \boldsymbol{\dot}{IV}_u^3 \leq &\ \Big( c h^{k+1} \big( c \|u\|_{H^{k+1}} + c \|f(u, \nb_\Ga u)\|_{L^2} \big) + c h^k \|u\|_{H^{k+1}} \Big) \|\vphi_h^\ell\|_{L^2} .
 \end{aligned}
\end{equation*}
 
The combination of the estimates \eqref{eq:est for mat d_u terms - g_h}--\eqref{eq:est for mat d_u terms - IV - new}, using the norm equivalence \eqref{eq:norm equivalence}, yields for a general $f(u,\nbg u)$:
\begin{subequations}
\label{eq:est for mat d_u}
\begin{equation}
	\|\mat_h d_u\|_{L^2} \leq c h^{k} \Big( \sum_{j=0}^2 \|(\mat)^{(j)} u\|_{H^{k+1}} + \sum_{j=0}^1 \|(\mat)^{(j)} w\|_{H^{k+1}} \Big) .
\end{equation}
If $f$ is \emph{independent} of $\nbg u$, then we obtain
\begin{equation}
	\|\mat_h d_u\|_{L^2} \leq c h^{k+1} \Big( \sum_{j=0}^2 \|(\mat)^{(j)} u\|_{H^{k+1}} + \sum_{j=0}^1 \|(\mat)^{(j)} w\|_{H^{k+1}} \Big) .
\end{equation}
\end{subequations}

%%%%%%%%%%%%%%%%%%%%%%%%%%%%%%%%%%%%%%%%%%%%%%%%%%%%%%%%%%%%%%%%%%%%%%%%%%%%%%%%%%%%%%%%%%%%%%%
%%%%%%%%%%%%%%%%%%%%%%%%%%%%%%%%%%% DEFECT dw %%%%%%%%%%%%%%%%%%%%%%%%%%%%%%%%%%%%%%%%%%%%%%%%%
%%%%%%%%%%%%%%%%%%%%%%%%%%%%%%%%%%%%%%%%%%%%%%%%%%%%%%%%%%%%%%%%%%%%%%%%%%%%%%%%%%%%%%%%%%%%%%%
%
\emph{Bound for $d_w$:}	
The $L^2$ norm of the defect $d_w$ \eqref{eq:defect expression - d_w} is estimated by the same techniques by which the bound \eqref{eq:est for d_u terms - mass term} was shown. 

By similar techniques as before, and using \eqref{eq:GeometricBilinearPsi} together with \eqref{eq:Ritz map W^1,infty bound} the pairs for $d_w$ are estimated analogously. The bounds for $I_w$ and $II_w$ are straightforward using the arguments above for $d_u$, while $III_w$ is bounded,  similarly to \eqref{eq:est for d_u terms - g term}, using the local Lipschitz continuity of $g$, by
\begin{equation}
\label{eq:est for d_w - II nonlinear term}
	\begin{aligned}
		III_w 
%		=
%	%	m_h(g(\widetilde{R}_h u,\nb_{\Ga_h}\widetilde{R}_h u), \vphi_h) - m(g(u,\nb_{\Ga} u), \vphi_h) = 
%		&\  m_h(g(\widetilde{R}_h u,\nb_{\Ga_h}\widetilde{R}_h u), \vphi_h) - m(g(R_h u, \nb_\Ga R_h u), \vphi_h^\ell) \\
%		&\ + m(g(R_h u,\nb_{\Ga} R_h u) - g(u,\nb_{\Ga} u), \vphi_h^\ell) \\
%		\leq &\ ch^{k+1} \|g(R_h u, \nb_\Ga R_h u)\|_{L^2} \|\vphi_h^\ell\|_{L^2} 
%	%	\\ &\
%		+ c \|R_h u - u\|_{H^1} \|\vphi_h^\ell\|_{L^2} \\
%		\leq &\ \Big( c h^{k+1} \|g(R_h u, \nb_\Ga R_h u)\|_{L^2} + c h^k \|u\|_{H^{k+1}} \Big) \|\vphi_h^\ell\|_{L^2} \\
%%		\leq &\ \Big( c h^{k+1} \|g(R_h u, \nb_\Ga R_h u) - g(u, \nb_\Ga u)\|_{L^2} + \|f(u, \nb_\Ga u)\|_{L^2} + c h^k \|u\|_{H^{k+1}} \Big)  \|\vphi_h^\ell\|_{L^2} \\
		\leq &\ \Big( c h^{k+1} \big( c \|u\|_{H^{k+1}} + \|g(u, \nb_\Ga u)\|_{L^2} \big) + c h^k \|u\|_{H^{k+1}} \Big) \|\vphi_h^\ell\|_{L^2} .
	\end{aligned}
\end{equation}
Again, note the only $O(h^k)$-term in \eqref{eq:est for d_w - II nonlinear term}.

We altogether obtain the estimate, for the general case $g(u,\nbg u)$:
\begin{subequations}
\label{eq:est for d_w}
\begin{equation}
	\begin{aligned}
		\|d_w\|_{L^2} \leq &\ c h^{k} \Big( \|u\|_{H^{k+1}} + \|w\|_{H^{k+1}} + \|u\|_{W^{2,\infty}} \Big) .
	\end{aligned} 
\end{equation}
Similarly as before, if $g$ is \emph{independent} of $\nbg u$, the above estimate improves to
\begin{equation}
	\|d_w\|_{L^2} \leq c h^{k+1} \Big( \|u\|_{H^{k+1}} + \|w\|_{H^{k+1}} + \|u\|_{W^{2,\infty}} \Big) .
\end{equation}
\end{subequations}

\emph{Bound for $\mat_h d_w$:}	
Just as for $\mat_h d_u$, we differentiate the expression \eqref{eq:defect expression - d_w} with respect to time. Using again $\mat_h\vphi_h = \mat_h (\vphi_h^\ell) = 0$, we obtain
\begin{equation*}
\begin{aligned}
	m_h(\mat_h d_w , \vphi_h) = - r_h(V_h;d_w,\vphi_h) + \diff \Big(I_w + II_w + III_w \Big) .
\end{aligned}
\end{equation*}
The first term is estimated using \eqref{eq:est for d_w}, while the remaining terms are bounded similarly to \eqref{eq:est for mat d_u terms - II} and \eqref{eq:est for mat d_u terms - IV - new} (using \eqref{eq:Ritz map W^1,infty bound}).

Altogether, we obtain, for a general $g(u,\nbg u)$:
\begin{subequations}
\label{eq:est for mat d_w}
\begin{equation}
	\|\mat_h d_w\|_{L^2} \leq c h^{k} \bigg( \sum_{j=0}^1 \Big( \|(\mat)^{(j)} u\|_{H^{k+1}} + \|(\mat)^{(j)} w\|_{H^{k+1}} \Big) + \|u\|_{W^{2,\infty}} \bigg) .
\end{equation}
while, if $g$ is \emph{independent} of $\nbg u$ we obtain
\begin{equation}
	\|\mat_h d_w\|_{L^2} \leq c h^{k+1} \bigg( \sum_{j=0}^1 \Big( \|(\mat)^{(j)} u\|_{H^{k+1}} + \|(\mat)^{(j)} w\|_{H^{k+1}} \Big) + \|u\|_{W^{2,\infty}} \bigg) .
\end{equation}
\end{subequations}

\qed\end{proof}

\blueoff

\begin{remark}
	If the non-linearities are depending only \emph{linearly} on $\nbg u$, e.g.~an advective term $f(u,\nbg u) = \tilde f(u) + \boldsymbol{w} \cdot \nbg u$, then the defects (although do not fall into case (b)) can still be bounded as $O(h^{k+1})$. This requires the use of individually modified Ritz maps, whose definition includes this linear $\nbg u$-depending term. Such Ritz maps have been already used and analysed in \cite[Definition~8.1]{LubichMansour_wave}, and \cite{Willmore}.
\end{remark}

\section{Proof of Theorem~\ref{theorem:semi-discrete convergence}}
\label{section:proof of main theorem}

\begin{proof}[Proof of Theorem~\ref{theorem:semi-discrete convergence}]
We combine the stability bound of Proposition~\ref{proposition:stability}, and the consistency estimates of Proposition~\ref{proposition:consistency}.

The errors are split as follows
\begin{equation*}
\begin{aligned}
	u - u_h^\ell = &\ u - R_h u + \big( u_h^* - u_h \big)^\ell , \\
	w - w_h^\ell = &\ w - R_h w + \big( w_h^* - w_h \big)^\ell , \\
	\mat( u -  u_h^\ell) = &\ \mat (u - R_h u) + \big( \mat_h (u_h^* - u_h) \big)^\ell ,
\end{aligned}
\end{equation*}
upon recalling that $u_h^* = \widetilde{R}_h u$ and $w_h^* = \widetilde{R}_h w$.

The first terms in each error are directly and similarly bounded by error estimates for the Ritz map Lemma~\ref{lemma:Ritz map error} -- uniformly in time -- by
\begin{equation*}
	\|u - R_h u\|_{L^2(\Ga\t)} + h \|u - R_h u\|_{H^1(\Ga\t)} \leq c h^{k+1} \|u\|_{H^{k+1}(\Ga\t)} .
%	\sum_{j=0}^{1} \big(\|(\mat)^{(j)} (u - R_h u)\|_{L^2} + h \|(\mat)^{(j)} (u - R_h u)\|_{H^1}\big) \leq c h^{k+1} ,
\end{equation*}
The second terms are the errors $e_{u_h}$, $e_{w_h}$ and $\mat_h e_{u_h}$, therefore bounded by the combination of the stability estimate \eqref{eq:stability bound} and the consistency estimates Proposition~\ref{proposition:consistency} (a) and (b), for the two respective cases of $\nbg u$ dependency. In Proposition~\ref{proposition:stability} the $W^{1,\infty}$ norm assumption on $u_h^* = \widetilde{R}_h u$ was proved in \eqref{eq:Ritz map W^1,infty bound}. 
Altogether, we obtain
\begin{equation*}
%	\|e_{u_h}\|_{H^1(\Ga\t)}^2 + \|e_{w_h}\|_{H^1(\Ga\t)}^2 + \int_0^t{\|\mat_h e_{u_h}\|_{H^1(\Ga(s))}^2} \d s \leq c h^{k+1} .
	\|e_{u_h}\|_{H^1(\Ga_h\t)}^2 + \|e_{w_h}\|_{H^1(\Ga_h\t)}^2 + \int_0^t{\|\mat_h e_{u_h}\|_{H^1(\Ga_h(s))}^2} \d s \leq c h^{2 j} ,
\end{equation*}
where $j = k$ in case (a), and $j = k+1$ in case (b).

By combining the above estimates we obtain the stated error estimates in parts (a) and (b) of Theorem~\ref{theorem:semi-discrete convergence}.
\qed\end{proof}

\section{Full discretisation via linearly implicit backward difference formulae}
\label{section:BDF}
We recall the matrix--vector formulation from \eqref{eq:matrix-vector form - diff}:
\begin{subequations}
	\begin{align*}
	\diff \Big( \bfM\t \bfu\t \Big) + \bfA\t \bfw\t = &\ \bff(\bfu\t) , \\
	\bfM\t \bfw\t - \bfA\t \bfu\t = &\ \bfg(\bfu\t) .
	\end{align*}
\end{subequations}
As a time discretisation, we consider the linearly implicit $s$-step \emph{backward differentiation formulae} (BDF). 
For a step size $\tau>0$, and with $t_n = n \tau \leq T$, the discretised time derivative is determined by
\begin{equation}
\label{eq:backward differences def}
	\dot \bfu^n = \frac{1}{\tau} \sum_{j=0}^s \delta_j \bfu^{n-j} , \qquad n \geq s ,
\end{equation}
while the non-linear term uses an extrapolated value, and reads as:
\begin{equation*}
	\widetilde{\bfu}^n := \sum_{j=0}^{s-1} \gamma_j \,\bfu^{n - 1 -j} , \qquad n \geq s .
\end{equation*}
We determine the approximations to the variables $\bfu^n$ to $\bfu(t_n)$ and $\bfw^n$ to $\bfw(t_n)$ by the fully discrete system of \emph{linear} equations, for $n \geq s$,
\begin{equation}
\label{eq: MatrixSystemBDFFull}
	\begin{bmatrix}
	\delta_0 \,\bfM(t_n) & \tau\,\bfA(t_n) \\
	-\, \bfA(t_n) & \bfM(t_n)
	\end{bmatrix}
	\begin{bmatrix}
	\bfu^n \\
	\bfw^n
	\end{bmatrix}
	=
%	\begin{bmatrix}
%	\bff (\sum_{j=0}^{s-1}{\gamma_j \,\bfu^{n - 1 -j}}) - \sum_{j=1}^{s} {\delta_j \,\bfM(t_{n-j})\,\bfu^{n-j}} \\
%	\,  \bfg (\sum_{j=0}^{s-1}{\gamma_j \,\bfu^{n - 1 -j}})
%	\end{bmatrix}
	\begin{bmatrix}
	\bff (\widetilde{\bfu}^n) - \sum_{j=1}^{s} {\delta_j \,\bfM(t_{n-j})\,\bfu^{n-j}} \\
	\,  \bfg (\widetilde{\bfu}^n)
	\end{bmatrix},
\end{equation}
which is used for the upcoming numerical experiments.
The starting values $\bfu^i$ and $\bfw^i$ ($i=0,\dotsc,s-1$) are assumed to be given. They can be precomputed using either a lower order method with smaller step sizes, or an implicit Runge--Kutta method.

The method is determined by its coefficients, given by $\delta(\zeta)=\sum_{j=0}^s \delta_j \zeta^j=\sum_{\ell=1}^s \frac{1}{\ell}(1-\zeta)^\ell$ and $\gamma(\zeta) = \sum_{j=0}^{s-1} \gamma_j \zeta^j = (1 - (1-\zeta)^s)/\zeta$. 
The classical BDF method is known to be zero-stable for $s\leq6$ and to have order $s$; see \cite[Chapter~V]{HairerWannerII}.
This order is retained by the linearly implicit variant using the above coefficients $\gamma_j$; 
cf.~\cite{AkrivisLubich_quasilinBDF,AkrivisLiLubich_quasilinBDF}.

The anti-symmetric structure of the system is preserved, and is observed in \eqref{eq: MatrixSystemBDFFull}. Since the idea of energy estimates, using the $G$-stability theory of Dahlquist \cite{Dahlquist} and the multiplier technique of Nevanlinna \& Odeh \cite{NevanlinnaOdeh}, can be transferred to linearly implicit BDF full discretisations (up to order 5), we strongly expect that Proposition~\ref{proposition:stability} translates to the fully discrete case, and so does the convergence result Theorem~\ref{theorem:semi-discrete convergence}. 
This is strengthened by the successful application of these techniques to the analogous linearly implicit backward difference methods applied to evolving surface PDEs: \cite{LubichMansourVenkataraman_bdsurf,ALE2,KovacsPower_quasilinear} showing optimal-order error bounds for various problems on evolving surfaces. 
The method was also analysed for various geometric surface flows, for $H^1$-regularised surface flows \cite{soldrivenBDF}, and for mean curvature flow \cite{MCF}, both proving optimal-order error bounds for full discretisations.

\section{Numerical experiments}
\label{section:numerics}

We performed numerical experiments, using \eqref{eq: MatrixSystemBDFFull}, for the classical non-linear Cahn--Hilliard equation on an evolving surface,  hence our results are easily compared to those in the literature, in particular \cite{ElliottRanner_CH}.  
We report on the following experiments:
\begin{itemize}
	\item[-] We perform a convergence test for the \emph{non-linear} Cahn--Hilliard equation with the linear evolving surface FEM and BDF methods of various order, to illustrate the convergence rates of Theorem~\ref{theorem:semi-discrete convergence}. We would like to note here that \cite{ElliottRanner_CH} only presents errors and EOCs for a \emph{linear} problem (using the linearly implicit Euler method).
	\item[-] We perform the same experiment as Elliott and Ranner in \cite[Section~6.2]{ElliottRanner_CH}, i.e.~we report on the evolution of the Ginzburg--Landau energy along the surface evolution for the non-linear Cahn--Hilliard equation with $\eps = 0.1$ using the first and second order BDF methods.
	\item[-] We perform a numerical experiment that reports on the effects of $\boldsymbol{\vartheta}$ and using the Ritz map as initial value. 
\end{itemize}
In the numerical experiments we use the classical Cahn--Hilliard equation on an evolving surface \eqref{eq:CH system} with the double-well potential, hence the non-linear terms are $f(u) = 0$ and  $g(u) = \frac{1}{4}((u^2 - 1)^2)' = u^3 - u$. With an arbitrary $0 < \eps < 1$, formulated as a system the problem reads:
\begin{equation}
	\label{eq:CH system - inhomogene}
	\begin{alignedat}{3}
		\mat u - \laplace_{\Ga\t} w = &\ - u (\nb_{\Ga\t} \cdot v) + b & \quad & \text{on } \Ga\t , \\
		w + \eps \laplace_{\Ga\t} u  = &\  \eps\inv  g(u) & \quad & \text{on } \Ga\t ,
	\end{alignedat}
\end{equation}
with an extra inhomogeneity $b (\cdot,t) : \Ga\t \to \R$, chosen such that the exact solution is known to be $u(x,t) = e^{-6t} x_1 x_2$, while $w$ is also explicitly known through the second equation of \eqref{eq:CH system - inhomogene}.
The surface $\Ga\t$ evolves time-periodically from a sphere into an ellipsoid and back.
In particular the surface is given as the zero level set of a distance function:
\begin{equation}
\label{eq: evolvingEllipsoid}
	\Gamma\t = \big\{ x \in \mathbb{R}^3 \mid d(x,t) = a\t^{-1} x_1^2 + x_2^2 + x_3^2 - 1 = 0 \big\} ,
\end{equation}
with $a\t = 1 + 0.25\sin(2 \pi t)$. The initial surface $\Ga(0) = \Ga^0$ is the unit sphere. 
The surface evolution is computed using the ODE for the positions \eqref{eq:ODE for positions}, with 
\begin{equation*}
	v = V \nu , \qquad \text{with} \qquad V = - \frac{\pa_t d}{|\nb d|} \ \ \text{and} \ \ \nu = \frac{\nb d}{|\nb d|} .
\end{equation*}
For the numerical experiments the ODE was solved numerically by the classical 4th order Runge--Kutta method with the smallest time step size present in the experiment.

Various numerical experiments have been carried out using the same evolving surface, in particular also for the Cahn--Hilliard equation by Elliott and Ranner \cite{ElliottRanner_CH}, and for other problems as well, see, for instance \cite{DziukElliott_ESFEM,LubichMansourVenkataraman_bdsurf}. 

The initial value $u_h^0$ is the interpolation of the exact initial value $u_0$. For high-order BDF methods the required additional starting values $u_h^i$ (for $i=1,\dotsc,q-1$) are taken as the interpolation of the exact values, if they exist, as well or are otherwise computed using a cascade of steps performed by the preceding lower order method.

\subsection{Convergence experiments}

The following convergence experiments are illustrating the convergence rates stated by Theorem~\ref{theorem:semi-discrete convergence}. 
In these experiments we have used the parameter $\eps = 0.5$. The final time is $T=1$, the time discretisations use a sequence of time step sizes $\tau = 0.2 \cdot 2^{-i}$ for $i=1,\dotsc,7$, and a sequence of initial meshes with (roughly quadrupling) degrees of freedom as reported in the figures.

In Figures~\ref{figure:BDF1_convplot_space}--\ref{figure:BDF3_convplot_time} we report on the $L^\infty(L^2)$ norm errors (left) and $L^\infty(H^1)$ norm errors (right) between the numerical and exact solution for both variables $u$ and $w$, i.e.~the plots show the errors
\begin{equation*}
	\|u-u_h^\ell\|_{L^\infty(L^2)} + \|w-w_h^\ell\|_{L^\infty(L^2)} \quad \text{ and } \quad \|u-u_h^\ell\|_{L^\infty(H^1)} + \|w-w_h^\ell\|_{L^\infty(H^1)} ,
\end{equation*}
where the norms are understood as 
\begin{equation*}
	\|u-u_h^\ell\|_{L^\infty(L^2)} = \max_{0 \leq n \tau \leq T} \| u(\cdot,n \tau)-(u_h^n)^\ell \|_{L^2(\Ga(n \tau))} .
\end{equation*}
For the first order BDF method, Figure~\ref{figure:BDF1_convplot_space} shows logarithmic plots of the errors against the mesh width $h$, the lines marked with different symbols correspond to different time step sizes. We also report on temporal convergence in Figure~\ref{figure:BDF1_convplot_time}, where the roles are reversed, the errors are plotted against the time step size $\tau$, and the lines with different markers correspond to different mesh refinements.

In Figure~\ref{figure:BDF1_convplot_space} we can observe two regions: a region where the spatial discretisation error dominates, matching to the order of convergence of our theoretical results of Theorem~\ref{theorem:semi-discrete convergence} (note the reference lines), and a region, with small mesh widths, where the temporal discretisation error dominates (the error curves flatten out). For the $H^1$ norm we observe better spatial convergence rates as the predicted $O(h^k)$, (probably due to the smoothness of the exact solution). For Figure~\ref{figure:BDF1_convplot_time}, the same description applies, but with reversed roles. Although, we do not study convergence of full discretisations, the classical order of the BDF methods is observed. We note here, that flat error curves, which were completely dominated by a discretisation error, were not plotted.

Figure~\ref{figure:BDF3_convplot_space} and \ref{figure:BDF3_convplot_time} report on the same plots, but for the third order BDF method. Again, both the spatial and temporal convergence, as shown by the figures, are in agreement with the theoretical convergence results of Theorem~\ref{theorem:semi-discrete convergence} and with  the classical orders of the BDF methods (note the reference lines).

The plots for time convergence, Figures~\ref{figure:BDF1_convplot_time} and \ref{figure:BDF3_convplot_time}, are supporting our claim that Theorem~\ref{theorem:semi-discrete convergence} can be extended for full discretisations with linearly implicit BDF methods, which is left to a subsequent work. 

% BDF 1
\begin{figure}[htbp]
	\includegraphics[width=\textwidth]{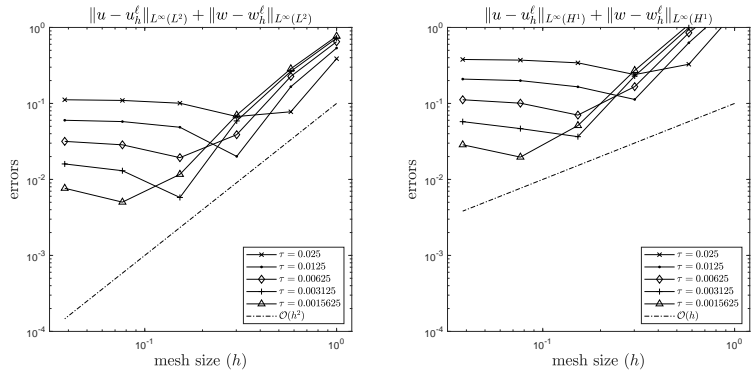}
	\caption{Spatial convergence of the BDF1 / linear ESFEM discretisation for the non-linear Cahn--Hilliard equation on an evolving ellipsoid}
	\label{figure:BDF1_convplot_space}
\end{figure}
\begin{figure}[htbp]
	\includegraphics[width=\textwidth]{"figures/CHsurf_convplot_time_Linfty_BDF1"}
	\caption{Temporal convergence of the BDF1 / linear ESFEM discretisation for the non-linear Cahn--Hilliard equation on an evolving ellipsoid}
	\label{figure:BDF1_convplot_time}
\end{figure}

% BDF 3
\begin{figure}[htbp]
	\includegraphics[width=\textwidth]{"figures/CHsurf_convplot_space_Linfty_BDF3"}
	\caption{Spatial convergence of the BDF3 / linear ESFEM discretisation for the non-linear Cahn--Hilliard equation on an evolving ellipsoid}
	\label{figure:BDF3_convplot_space}
\end{figure}
\begin{figure}[htbp]
	\includegraphics[width=\textwidth]{"figures/CHsurf_convplot_time_Linfty_BDF3"}
	\caption{Temporal convergence of the BDF3 / linear ESFEM discretisation for the non-linear Cahn--Hilliard equation on an evolving ellipsoid}
	\label{figure:BDF3_convplot_time}
\end{figure}

\subsection{The Ginzburg--Landau energy}

The numerical experiments in \cite[Section~6.2]{ElliottRanner_CH} reporting on the Ginzburg--Landau energy were repeated here for high-order BDF methods.

We again consider the non-linear Cahn--Hilliard equation \eqref{eq:CH system - inhomogene}, with $\eps = 0.1$ and with $b = 0$ on the same evolving surface $\Ga\t$ as before, but with $a\t = 1 + 0.25\sin(10 \pi t)$, and with initial value
\begin{equation*}
	u_0(x) = 0.1 \cos(2 \pi x_1) \cos(2 \pi x_2) \cos(2 \pi x_3) .
\end{equation*}
This setting is the same as in \cite[Section~6.2]{ElliottRanner_CH}.
\\
\\
In Figure~\ref{figure:Ginzburg--Landau energy - T=0.2} and \ref{figure:Ginzburg--Landau energy - T=1} we report on the time evolution of the Ginzburg--Landau energy (until $T=0.2$ and $T=1$) of the BDF2 / linear ESFEM discretisation. In both plots we have used the time step size $\tau = 10^{-4}$ (the same as \cite[Section~6.2]{ElliottRanner_CH}), and eight different mesh refinement levels (higher numbering denotes finer meshes). The meshes are not nested refinements of a single coarse grid. The coarsest mesh has $54$ while the finest has $10146$ nodes. 

As it was pointed out by Elliott and Ranner \cite{ElliottRanner_CH} \emph{``the energy does not decrease monotonically along solutions''}, see Figure~\ref{figure:Ginzburg--Landau energy - T=0.2}, and as they predicted the solutions converge to a time-periodic solution, the periodicity in their energies is nicely observed in Figure~\ref{figure:Ginzburg--Landau energy - T=1}.

% Ginzburg--Landau energy
\begin{figure}[htbp]
	\includegraphics[width=\textwidth]{"figures/T02_GLenergy_BDF2"}
	\caption{The Ginzburg--Landau energy over $[0,0.2]$ for BDF2 / linear ESFEM discretisation with $\tau = 10^{-4}$ and over several spatial refinements.}
	\label{figure:Ginzburg--Landau energy - T=0.2}
\end{figure}
\begin{figure}[htbp]
	\includegraphics[width=\textwidth]{"figures/T1_GLenergy_BDF2"}
	\caption{The Ginzburg--Landau energy over $[0,1]$ for BDF2 / linear ESFEM discretisation with $\tau = 10^{-4}$ and over several spatial refinements.}
	\label{figure:Ginzburg--Landau energy - T=1}
\end{figure}

\subsection{The effect of $\boldsymbol{\vartheta}$}

We report on the effect of $\boldsymbol{\vartheta}$ by presenting the computed numerical solution obtained from the scheme \eqref{eq:matrix-vector form - pre - diff} and \eqref{eq:matrix-vector form - diff} with the interpolation and the Ritz map as initial values, respectively. 

We again use the evolving ellipsoid example with $a\t = 1 + 0.5\sin(\frac{2 \pi t}{5})$, cf.~\eqref{eq: evolvingEllipsoid} and an initial sphere of radius $R = 5$, while the starting value is $u^0 = \frac{225}{56693}(x_1 + x_1^2x_2^2x_3)$ (such that $\max |u^0| = 1$). The discrete initial values are the interpolation of $u^0$ for \eqref{eq:matrix-vector form - pre - diff} and the Ritz map \eqref{eq:definition Ritz map} of $u^0$ for \eqref{eq:matrix-vector form - diff}. The nodal vector $\boldsymbol{\vartheta}$ and the Ritz map are each obtained by solving an elliptic problem.

Figure~\ref{figure:solutions} presents the numerical solutions with the two different discrete initial values, without (left) and with $\boldsymbol{\vartheta}$ (right), for different times $t = 0, 1, 2, 3, 5$, computed on a mesh with $4098$ nodes and using a time step size $\tau = 0.0125$.

\begin{figure}[htbp]
	\includegraphics[width=\textwidth,trim={105 85 60 50},clip]{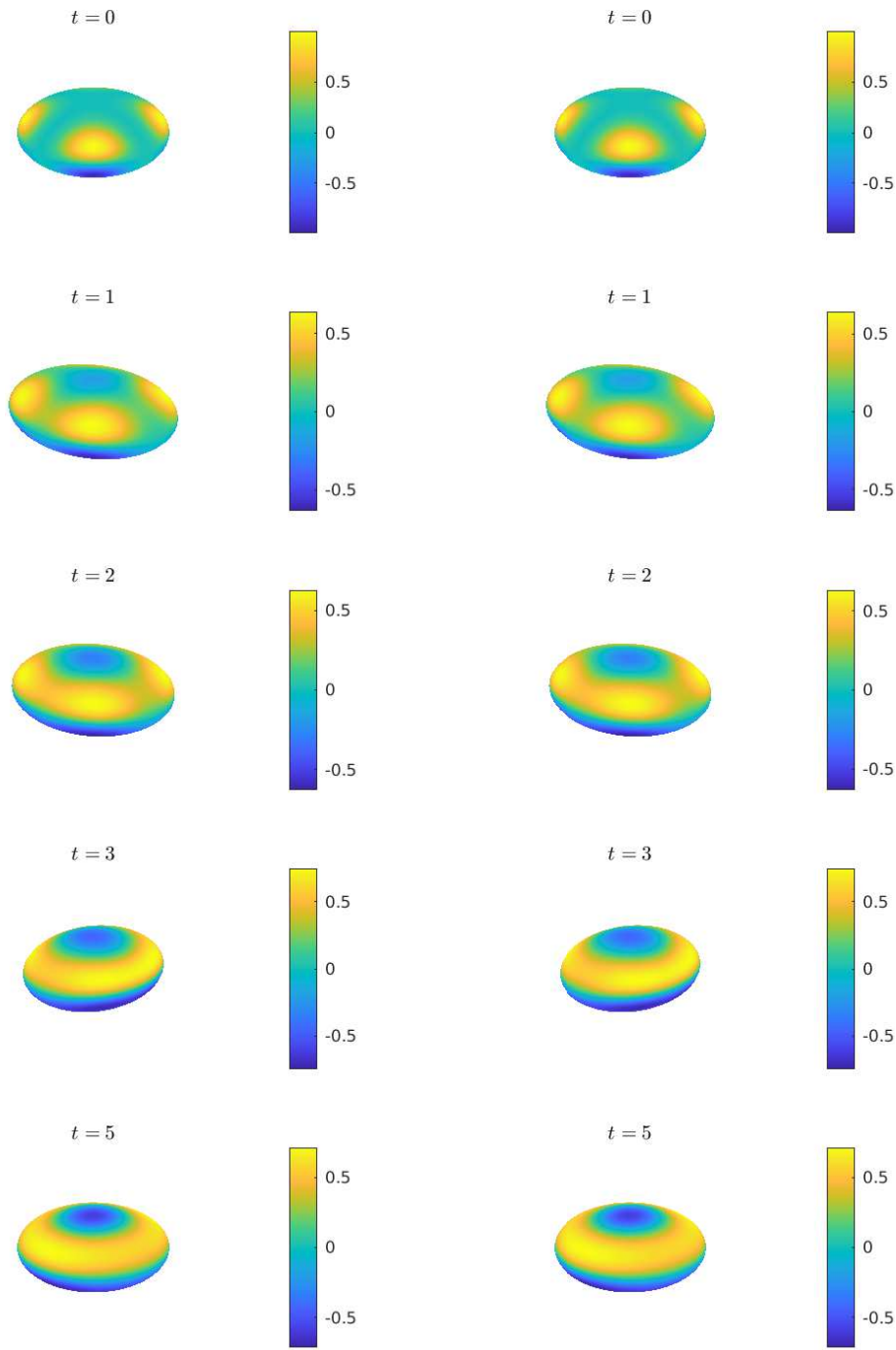}
	\caption{The numerical solutions obtained from \eqref{eq:matrix-vector form - pre - diff} and \eqref{eq:matrix-vector form - diff} -- without and with $\boldsymbol{\vartheta}$ -- with the  interpolation and Ritz map as initial values (on the left- and right-hand columns, respectively).}
	\label{figure:solutions}
\end{figure}

\section*{Acknowledgement}
We thank Christian Lubich for helpful discussions, in particular on initial values.

We would like to thank two Referees whose comments have helped us to improve the presentation of the paper.

The manuscript was partially written when Bal\'azs Kov\'acs had been working at the University of T\"ubingen. We gratefully acknowledge their support.

The work of Cedric Aaron Beschle is funded by the Deutsche Forschungsgemeinschaft (DFG, German Research Foundation) -- Project-ID 251654672 -- TRR 161. 

The work of Bal\'azs Kov\'acs is supported by Deutsche Forschungsgemeinschaft -- Project-ID 258734477 -- SFB 1173, and by the Heisenberg Programme of the Deutsche Forschungsgemeinschaft (DFG, German Research Foundation) -- Project-ID 446431602.

%\clearpage
\bibliographystyle{abbrv}
\bibliography{CHsurf_references}

\begin{thebibliography}{10}

\bibitem{AkrivisLiLubich_quasilinBDF}
G.~Akrivis, B.~Li, and C.~Lubich.
\newblock Combining maximal regularity and energy estimates for time
  discretizations of quasilinear parabolic equations.
\newblock {\em Math. Comp.}, 86(306):1527--1552, 2017.

\bibitem{AkrivisLubich_quasilinBDF}
G.~Akrivis and C.~Lubich.
\newblock Fully implicit, linearly implicit and implicit-explicit backward
  difference formulae for quasi-linear parabolic equations.
\newblock {\em Numer.~Math.}, 131(4):713--735, 2015.

\bibitem{AlphonseElliottStinner_abstract}
A.~Alphonse, C.~M. Elliott, and B.~Stinner.
\newblock An abstract framework for parabolic {PDE}s on evolving spaces.
\newblock {\em Port. Math.}, 72(1):1--46, 2015.

\bibitem{AlphonseElliottStinner_linear}
A.~Alphonse, C.~M. Elliott, and B.~Stinner.
\newblock On some linear parabolic {PDE}s on moving hypersurfaces.
\newblock {\em Interfaces Free Bound.}, 17(2):157--187, 2015.

\bibitem{FemTwoPhase}
J.~W. Barrett, H.~Garcke, and R.~N\"{u}rnberg.
\newblock Finite element approximation for the dynamics of fluidic two-phase
  biomembranes.
\newblock {\em ESAIM Math. Model. Numer. Anal.}, 51(6):2319--2366, 2017.

\bibitem{Beschle_thesis}
C.~Beschle.
\newblock Error estimates for the {C}ahn--{H}illiard equation on evolving
  surfaces.
\newblock University of T\"ubingen, Master thesis. 2019.

\bibitem{BreS08}
S.~C. Brenner and R.~Scott.
\newblock {\em The mathematical theory of finite element methods}, volume~15.
\newblock Springer, Berlin, 2008.

\bibitem{CaetanoElliott2021}
D.~Caetano and C.~M. Elliott.
\newblock Cahn-{H}illiard equations on an evolving surface.
\newblock {\em European J. Appl. Math.}, 32(5):937--1000, 2021.

\bibitem{CahnHilliard}
J.~Cahn and J.~Hilliard.
\newblock Free energy of a nonuniform system. {I}. {I}nterfacial free energy.
\newblock {\em J. Chem. Phys.}, 28(2):258--267, 1958.

\bibitem{CherfilsMiranvilleZelik2014}
L.~Cherfils, A.~Miranville, and S.~Zelik.
\newblock On a generalized {C}ahn--{H}illiard equation with biological
  applications.
\newblock {\em Discrete Contin. Dyn. Syst. Ser. B}, 19(7):2013--2026, 2014.

\bibitem{Dahlquist}
G.~Dahlquist.
\newblock {G}--stability is equivalent to {A}--stability.
\newblock {\em BIT}, 18(4):384--401, 1978.

\bibitem{Demlow}
A.~Demlow.
\newblock Higher-order finite element methods and pointwise error estimates for
  elliptic problems on surfaces.
\newblock {\em SIAM J. Numer. Anal.}, 47(2):805--827, 2009.

\bibitem{DuJuTian}
Q.~Du, L.~Ju, and L.~Tian.
\newblock Finite element approximation of the {C}ahn--{H}illiard equation on
  surfaces.
\newblock {\em Comput. Methods Appl. Mech. Engrg.}, 200(29-32):2458--2470,
  2011.

\bibitem{DuanZhao2017}
N.~Duan and X.~Zhao.
\newblock Global existence of a generalized {C}ahn--{H}illiard equation with
  biological applications.
\newblock {\em arXiv:1712.02989}, 2017.

\bibitem{DziukElliott_ESFEM}
G.~Dziuk and C.~M. Elliott.
\newblock Finite elements on evolving surfaces.
\newblock {\em IMA J. Numer. Anal.}, 27(2):262--292, 2007.

\bibitem{DziukElliott_acta}
G.~Dziuk and C.~M. Elliott.
\newblock Finite element methods for surface {PDE}s.
\newblock {\em Acta Numer.}, 22:289--396, 2013.

\bibitem{DziukElliott_L2}
G.~Dziuk and C.~M. Elliott.
\newblock {$L^2$}--estimates for the evolving surface finite element method.
\newblock {\em Math. Comp.}, 82(281):1--24, 2013.

\bibitem{DziukKronerMuller}
G.~Dziuk, D.~Kr\"{o}ner, and T.~M\"{u}ller.
\newblock Scalar conservation laws on moving hypersurfaces.
\newblock {\em Interfaces and Free Boundaries}, 15(2):203--236, 2013.

\bibitem{DziukLubichMansour_rksurf}
G.~Dziuk, C.~Lubich, and D.~Mansour.
\newblock {R}unge--{K}utta time discretization of parabolic differential
  equations on evolving surfaces.
\newblock {\em IMA J. Numer. Anal.}, 32(2):394--416, 2012.

\bibitem{ALE1}
C.~Elliott and C.~Venkataraman.
\newblock Error analysis for an {ALE} evolving surface finite element method.
\newblock {\em Numer. Methods Partial Differential Equations}, 31(2):459--499,
  2015.

\bibitem{Elliott_flatCHsurvey}
C.~M. Elliott.
\newblock The {C}ahn-{H}illiard model for the kinetics of phase separation.
\newblock In {\em Mathematical models for phase change problems (\'{O}bidos,
  1988)}, volume~88 of {\em Internat. Ser. Numer. Math.}, pages 35--73.
  Birkh\"{a}user, Basel, 1989.

\bibitem{ElliottRanner_CH}
C.~M. Elliott and T.~Ranner.
\newblock Evolving surface finite element method for the {C}ahn-{H}illiard
  equation.
\newblock {\em Numer. Math.}, 129(3):483--534, 2015.

\bibitem{ElliottRanner_unified}
C.~M. Elliott and T.~Ranner.
\newblock {A unified theory for continuous-in-time evolving finite element
  space approximations to partial differential equations in evolving domains}.
\newblock {\em IMA J. Numer. Anal.}, 11 2020.
\newblock draa062.

\bibitem{HairerWannerII}
E.~Hairer and G.~Wanner.
\newblock {\em Solving Ordinary Differential Equations II.: Stiff and
  differetial--algebraic problems}.
\newblock Springer, Berlin, {S}econd edition, 1996.

\bibitem{CHdynbc}
P.~Harder and B.~Kov{\'a}cs.
\newblock Error estimates for the cahn–hilliard equation with dynamic
  boundary conditions.
\newblock {\em IMA Journal of Numerical Analysis}, 06 2021.

\bibitem{KhainSander2008generalized}
E.~Khain and L.~M. Sander.
\newblock Generalized cahn-hilliard equation for biological applications.
\newblock {\em Physical Review E}, 77(5):051129, 2008.

\bibitem{highorderESFEM}
B.~Kov{\'a}cs.
\newblock High-order evolving surface finite element method for parabolic
  problems on evolving surfaces.
\newblock {\em IMA J. Numer. Anal.}, 38(1):430--459, 2018.

\bibitem{MCF}
B.~Kov{\'a}cs, B.~Li, and C.~Lubich.
\newblock A convergent evolving finite element algorithm for mean curvature
  flow of closed surfaces.
\newblock {\em Numer.~Math.}, 143(4):797--853, 2019.

\bibitem{Willmore}
B.~Kov\'{a}cs, B.~Li, and C.~Lubich.
\newblock A convergent evolving finite element algorithm for {W}illmore flow of
  closed surfaces.
\newblock {\em Numer. Math.}, 149(3):595--643, 2021.

\bibitem{KLLP2017}
B.~Kov\'{a}cs, B.~Li, C.~Lubich, and C.~{Power Guerra}.
\newblock Convergence of finite elements on an evolving surface driven by
  diffusion on the surface.
\newblock {\em Numer.~Math.}, 137(3):643--689, 2017.

\bibitem{soldrivenBDF}
B.~Kov{\'a}cs and C.~Lubich.
\newblock Linearly implicit full discretization of surface evolution.
\newblock {\em Numer. Math.}, 140(1):121--152, 2018.

\bibitem{KovacsPower_quasilinear}
B.~Kov\'{a}cs and C.~{Power Guerra}.
\newblock Error analysis for full discretizations of quasilinear parabolic
  problems on evolving surfaces.
\newblock {\em NMPDE}, 32(4):1200--1231, 2016.

\bibitem{ALE2}
B.~Kov\'{a}cs and C.~Power~Guerra.
\newblock Higher order time discretizations with {ALE} finite elements for
  parabolic problems on evolving surfaces.
\newblock {\em IMA J. Numer. Anal.}, 38(1):460--494, 2018.

\bibitem{advectiveCH}
J.~Liu, L.~Ded\`e, J.~A. Evans, M.~J. Borden, and T.~J.~R. Hughes.
\newblock Isogeometric analysis of the advective {C}ahn-{H}illiard equation:
  spinodal decomposition under shear flow.
\newblock {\em J. Comput. Phys.}, 242:321--350, 2013.

\bibitem{LubichMansour_wave}
C.~Lubich and D.~Mansour.
\newblock Variational discretization of wave equations on evolving surfaces.
\newblock {\em Math. Comp.}, 84(292):513--542, 2015.

\bibitem{LubichMansourVenkataraman_bdsurf}
C.~Lubich, D.~Mansour, and C.~Venkataraman.
\newblock Backward difference time discretization of parabolic differential
  equations on evolving surfaces.
\newblock {\em IMA J. Numer. Anal.}, 33(4):1365--1385, 2013.

\bibitem{Miranville}
A.~Miranville.
\newblock The {C}ahn--{H}illiard equation and some of its variants.
\newblock {\em AIMS Mathematics}, 2(3):479, 2017.

\bibitem{Miranville2019}
A.~Miranville.
\newblock Existence of solutions to a {C}ahn--{H}illiard type equation with a
  logarithmic nonlinear term.
\newblock {\em Mediterr. J. Math.}, 16(1):Paper No. 6, 18, 2019.

\bibitem{NevanlinnaOdeh}
O.~Nevanlinna and F.~Odeh.
\newblock Multiplier techniques for linear multistep methods.
\newblock {\em Numer. Funct. Anal. Optim.}, 3:377--423, 1981.

\bibitem{OConnorStinner}
D.~O'Connor and B.~Stinner.
\newblock The {C}ahn--{H}illiard equation on an evolving surface.
\newblock {\em arXiv:1607.05627}, 2016.

\bibitem{LatPhaseSep}
V.~Yushutin, A.~Quaini, S.~Majd, and M.~Olshanskii.
\newblock A computational study of lateral phase separation in biological
  membranes.
\newblock {\em Int. J. Numer. Methods Biomed. Eng.}, 35(3):3181, 2019.

\bibitem{PhaseSepDynSurf}
V.~Yushutin, A.~Quaini, and M.~Olshanskii.
\newblock Numerical modeling of phase separation on dynamic surfaces.
\newblock {\em J. Comput. Phys.}, 407:109--126, 2020.

\bibitem{IsogeomFem}
C.~Zimmermann, D.~Toshniwal, C.~M. Landis, T.~J.~R. Hughes, K.~K. Mandadapu,
  and R.~A. Sauer.
\newblock An isogeometric finite element formulation for phase transitions on
  deforming surfaces.
\newblock {\em Comput. Methods Appl. Mech. Engrg.}, 351:441--477, 2019.

\end{thebibliography}

\end{document}